\newtheorem{theorem}{Theorem}[section]
\newtheorem{lemma}[theorem]{Lemma}
\newtheorem{corollary}[theorem]{Corollary}
\newtheorem{conjecture}[theorem]{Conjecture}
\newtheorem{proposition}[theorem]{Proposition}
\newtheorem{remark}[theorem]{Remark}
\newtheorem{definition}[theorem]{Definition}
\newcommand{\pr}[1]{\operatorname{\mathbf{P}}\left[#1\right]}
\newcommand{\prcond}[2]{\operatorname{\mathbf{P}}\left[#1\;\middle\vert\;#2\right]}
\newcommand{\prsmall}[1]{\operatorname{\mathbf{P}}[#1]}
\newcommand{\E}[1]{\operatorname{\mathbf{E}}\left[#1\right]}
\newcommand{\Econd}[2]{\operatorname{\mathbf{E}}\left[#1\;\middle\vert\;#2\right]}
\newcommand{\vol}[1]{\operatorname{vol}\left(#1\right)}
\newcommand{\critical}{\mathrm{c}}
\newcommand{\compl}{\textrm{c}}
\newcommand{\tperc}{T_\mathrm{perc}}
\newcommand{\tdet}{T_\mathrm{det}}
\newcommand{\tbroad}{T_\mathrm{bc}}
\newcommand{\taucov}{T_\mathrm{cov}}
\newcommand{\tcov}{\E{\taucov}}
\newcommand{\1}{{\text{\Large $\mathfrak 1$}}}
\newcommand{\IGNORE}[1]{}
\newcommand{\topic}[1]{\par\medskip\noindent{\bf #1}\nopagebreak\newline\noindent}
\def\rset{{\mathbb{R}}}
\def\R{{\rset}}
\def\B{{B}}
\begin{document}

\title{Mobile Geometric Graphs: Detection, Coverage and Percolation}

\author{Yuval~Peres\thanks{Microsoft Research, Redmond WA, U.S.A.\ \ Email:
        \hbox{peres@microsoft.com}.}
\and
        Alistair~Sinclair\thanks{Computer Science Division, University
        of California, Berkeley CA~94720-1776, U.S.A.\ \ Email:
        \hbox{sinclair@cs.berkeley.edu}.
        Supported in part by NSF grant CCF-0635153
        and by a UC Berkeley Chancellor's Professorship.}
\and
        Perla~Sousi\thanks{University of Cambridge, Cambridge, U.K.\ \ Email:
        \hbox{p.sousi@statslab.cam.ac.uk}.}
\and
        Alexandre~Stauffer\thanks{Computer Science Division, University
        of California, Berkeley CA, U.S.A.\ \ Email:
        \hbox{stauffer@cs.berkeley.edu}.
        Supported by a Fulbright/CAPES scholarship and NSF grants CCF-0635153 and
        DMS-0528488. Part of this work was done while the author was doing a summer
        internship at Microsoft Research, Redmond WA.}
}
%\date{}
\maketitle
\thispagestyle{empty}

\begin{abstract}
We consider the following dynamic Boolean model introduced by van den Berg, Meester and White (1997).
At time $0$, let the nodes of the graph be a Poisson point process in $\R^d$ with constant intensity and
let each node move independently according to Brownian motion.
At any time $t$, we put an edge between every pair of nodes whose distance is at most $r$.
We study three fundamental problems in this model: detection (the time until a target point---fixed or moving---is
within distance $r$ of some node of the graph); coverage (the time until all points inside
a finite box are detected by the graph);
and percolation (the time until a given node belongs to the infinite connected component of the graph).
We  obtain precise asymptotics for these quantities by combining ideas from stochastic geometry,
coupling and multi-scale analysis.
\newline
\newline
\emph{Keywords and phrases.} Poisson point process, Brownian motion, coupling, Minkowski dimension.
\newline
MSC 2010 \emph{subject classifications.}
Primary 82C43; %Time-dependent percolation
Secondary 60G55, % Point processes
          60D05, % Geometric probability and stochastic geometry
          60J65, % Brownian motion
          60K35, % Interacting random processes; statistical mechanics type models; percolation theory
          82C21. % Dynamic continuum models (systems of particles, etc.)
\end{abstract}

\setcounter{footnote}{0}
\setcounter{page}{1}
%############################################################################################
%############################################################################################
%############################################################################################
\section{Introduction}
In a \emph{random geometric graph}, nodes are distributed according to a Poisson point process
in~$\R^d$ of intensity~$\lambda$ (i.e., nodes are uniformly distributed, with $\lambda$ nodes
in expectation per unit volume), and an edge is placed between all pairs of nodes whose
distance is at most~$r$.
%A random geometric graph is obtained from a Poisson point process of intensity $\lambda$ by
%connecting any two nodes whose distance is at most~$r$.
These graphs, also known as ``continuum percolation'' or the ''Boolean model,`` have been used extensively as models for communication networks \cite{GK2,MR,Penrose}.
While simple, these models capture important qualitative features of real networks, such as the phase transition in connectivity as the intensity $\lambda$ of the
Poisson point process is increased.

In many applications, the nodes of the network are not fixed in space but mobile.
It is natural to model the movement of the nodes by independent Brownian motions
\cite{vandenberg,Kesidis,Konst}; we call this the {\it mobile geometric graph\/} model.
We focus on three fundamental problems in this model: \emph{detection} (the time
until a target point---which may be fixed or moving---comes within distance~$r$ of a node of the graph);
\emph{coverage} (the time until all points in a large set are detected);
and \emph{percolation} (the time until a given node is connected to the infinite connected component).

We now give a precise definition of the model and state our results.
Motivation and related work are addressed in Section~\ref{sec:motivation}.

\topic{The Mobile Geometric Graph model}\\[-5pt]
Let $\Pi_0=\{X_i\}_i$ be a Poisson point process on $\rset^d$ of intensity $\lambda$.
To avoid ambiguity, we refer to the points of a point process as \emph{nodes}.
We let each node $X_i$ move according to a standard Brownian motion $(\zeta_i(s))_{s\geq 0}$ independently of the other nodes,
and set $\Pi_s = \{X_i + \zeta_i(s)\}_i$ to be the point process obtained after the nodes of $\Pi_0$ have moved for time $s$.
By standard arguments~\cite{vandenberg} it follows that $\Pi_s$ is again a Poisson point process of the same intensity~$\lambda$.

At any given time $s$ we construct a graph $G_s$ by putting an edge between any two nodes of $\Pi_s$ that are at distance at most~$r$.
In what follows we take~$r$ to be an arbitrary but fixed constant.
There exists a critical intensity $\lambda_\critical=\lambda_\critical(d)$ such that if $\lambda> \lambda_\critical$,
then a.s.\ there exists a unique infinite connected component in~$G_s$, which we denote by $C_{\infty}(s)$,
while if $\lambda < \lambda_\critical$ then all connected components are finite a.s.~\cite{MR,Penrose}.

%\topic{Notation}
%We write $B(x,\ell)$ for the ball in $\R^d$ centered at $x$ of radius $\ell$.
%\newline For any $A \subset \R^d$, we write $\vol{A}$ for the Lebesgue measure of $A$ and for any two points $x,y \in \R^d$ we write $|x-y|$ for their Euclidean distance.
%\newline We write $f \sim g$ as $x \to \infty$, if $\frac{f(x)}{g(x)} \to 1$ as $x \to \infty$.

\topic{Statement of results}\\[-5pt]
{\bf Detection.}\ \
Consider a ``target'' particle $u$ which is initially placed at the origin, and whose position at
time $s$ is given by $g(s)$, which is a continuous process in $\rset^d$.
We are interested in the time it takes for $u$ to be \emph{detected} by the mobile geometric graph,
namely how long it takes until a node of $\Pi_0$ has come within distance at most $r$ from~$u$.
More formally we define
\[
\tdet= \inf\Big\{t\geq 0: g(t) \in \bigcup_{i} B(X_i+\zeta_i(t),r)\Big\},
\]
where the union is taken over all the nodes $\{X_i\}$ of $\Pi_0$ and $B(x,r)$ denotes the ball of radius $r$ centered at $x$.
In Section~\ref{sec:detection} we prove the following theorem, which
extends previous classical results on detection
for non-mobile particles~$u$~\cite{SKM95,Kesidis}.

\begin{theorem}\label{thm:detection}
In two dimensions, for any fixed~$\lambda$ and any $g$ independent of the motions of the nodes of $\Pi_0$, we have that
\[
\pr{\tdet > t} \leq \exp\left(-2\pi \lambda \frac{t}{\log t}(1+o(1))\right).
\]
In addition, if $g$ is an independent Brownian motion then the above bound is tight, i.e.,
\begin{equation}
\pr{\tdet > t}= \exp\left(-2\pi \lambda \frac{t}{\log t}(1+o(1))\right).
\label{eq:detbm}
\end{equation}
\end{theorem}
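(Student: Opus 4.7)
The plan is to exploit the Poisson structure of $\Pi_0$ to convert the detection event into a Wiener-sausage computation, and then to bound the relevant expected area uniformly from below (for the upper bound) and on a favourable event from above (for the matching lower bound when $g$ is Brownian). Regarding $\{(X_i,\zeta_i)\}_i$ as a Poisson process on $\rset^2\times C([0,\infty),\rset^2)$ of intensity $\lambda\,dx\otimes(\text{Wiener})$ and conditioning on the target trajectory $g$, one obtains
\[
\prcond{\tdet>t}{g}\;=\;\exp(-\lambda\,\mu_g),\qquad
\mu_g\;:=\;\E{\Big|\bigcup_{0\le s\le t} B\bigl(g(s)-\zeta(s),r\bigr)\Big|},
\]
where $\zeta$ is a standard planar Brownian motion independent of $g$ and $|\cdot|$ denotes Lebesgue measure. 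The problem reduces to estimating $\mu_g$.

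\emph{Upper bound for arbitrary $g$.}
I would prove the pathwise lower bound $\mu_g\ge \frac{2\pi t}{\log t}(1-o(1))$ for every continuous $g$, whence averaging over $g$ gives $\pr{\tdet>t}\le\exp(-2\pi\lambda t(\log t)^{-1}(1+o(1)))$. The starting point is the Fubini identity
\[
\mu_g\;=\;\int_{\rset^2}\pr{\zeta(s)\in B(g(s)-x,r)\ \text{for some }s\le t}\,dx,
\]
and one has to control this when $\zeta-g$ is a planar Brownian motion perturbed by the deterministic continuous shift $-g$. Since its range still almost surely has Minkowski dimension $2$, I would adapt the proof of Spitzer's 1964 asymptotic for the planar Wiener sausage---which relies on local Brownian fluctuations and not on the absence of drift---to show that the leading constant $2\pi$ is preserved under this perturbation.

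\emph{Matching lower bound when $g$ is Brownian.}
Pick any $\omega(t)\to\infty$ subpolynomially (e.g.\ $\omega(t)=\log t$) and set $R=\sqrt{\omega(t)\log t}$. For the small-ball event $E=\{\sup_{0\le s\le t}|g(s)|\le R\}$, the planar Brownian small-ball estimate gives $\pr{E}\ge\exp(-c\,t/R^2)=\exp(-o(t/\log t))$. On $E$, $B(g(s)-\zeta(s),r)\subseteq B(-\zeta(s),r+R)$ for every $s$, so by Spitzer's theorem
\[
\mu_g\;\le\;\E{\Big|\bigcup_{s\le t}B(-\zeta(s),r+R)\Big|}\;\sim\;\frac{2\pi t}{\log(t/(r+R)^2)}\;=\;\frac{2\pi t}{\log t}(1+o(1)),
\]
since $\log(r+R)=o(\log t)$. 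Combining yields $\pr{\tdet>t}\ge\pr{E}\cdot\exp(-\lambda\sup\nolimits_{g\in E}\mu_g)\ge\exp(-2\pi\lambda t(\log t)^{-1}(1+o(1)))$, matching the upper bound.

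\emph{Main obstacle.}
The delicate step is the pathwise lower bound on $\mu_g$. A naive Jensen inequality goes the wrong way: when $g$ is itself a Brownian motion, $\E{\mu_g}$ is asymptotically $4\pi t/\log t$ (twice the target rate), because marginally $\zeta-g$ is distributed as $\sqrt{2}$ times a Brownian motion. Hence one cannot upper-bound $\pr{\tdet>t}$ by exchanging expectation and exponent; the lower bound on $\mu_g$ has to be established pathwise in $g$. This forces a quantitative Wiener-sausage estimate for planar Brownian motion plus an arbitrary deterministic continuous drift, whose content is that no such shift can push the asymptotic sausage area below Spitzer's $2\pi t/\log t$.
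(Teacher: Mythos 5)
Your reduction is exactly the paper's: condition on $g$, use the Poisson thinning identity $\prcond{\tdet>t}{g}=\exp(-\lambda\mu_g)$, and observe (correctly, and this is the key structural insight) that Jensen goes the wrong way because $\E{\mu_g}\sim 4\pi t/\log t$ when $g$ is Brownian, so the upper bound must come from a \emph{pathwise} lower bound $\mu_g\ge\frac{2\pi t}{\log t}(1-o(1))$ valid for every continuous drift. Your lower bound is also essentially the paper's argument in different clothing: the paper confines $u$ to $B(0,R)$ with $R=\log t$ and bounds the detection time of that ball, which by the compact-set version of the sausage identity is again $\exp(-\lambda\E{\vol{\cup_{s\le t}B(-\zeta(s),R+r)}})$ --- the same enlarged drift-free sausage you arrive at, with the same choice of $R$ up to constants and the same small-ball estimate $\exp(-ct/R^2)=\exp(-o(t/\log t))$ for the confinement cost. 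That part is complete and correct.

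The genuine gap is the pathwise estimate $\mu_g\ge\frac{2\pi t}{\log t}(1-o(1))$ itself, which you assert can be obtained by ``adapting Spitzer's proof'' because the range of $\zeta-g$ still has Minkowski dimension $2$. That heuristic does not deliver the sharp constant $2\pi$, and it is precisely this estimate that carries the content of the theorem (it is Lemma~3.4 of the paper). The argument that works is an occupation-time computation: for $y\in\rset^2$ let $Z_y=\int_0^t\ind{g(s)-\zeta(s)\in B(y,r)}\,ds$, so that $\mu_g=\int_{\rset^2}\pr{Z_y>0}\,dy$ and $\pr{Z_y>0}=\E{Z_y}/\Econd{Z_y}{Z_y>0}$. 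Fubini gives $\int_{\rset^2}\E{Z_y}\,dy=\pi r^2 t$ exactly, independently of $g$; and the conditional expectation is bounded \emph{uniformly in the drift} by
\[
\Econd{Z_y}{Z_y>0}\;\le\;1+\int_1^t\int_{B(y,r)}\frac{1}{2\pi s}\,dz\,ds\;\le\;1+\tfrac{r^2}{2}\log t,
\]
using only that the transition density of $\zeta(s)$ is at most $\frac{1}{2\pi s}$ regardless of where the shifted process restarts after first hitting $\partial B(y,r)$. Dividing gives $\mu_g\ge\frac{2\pi r^2t}{2+r^2\log t}$, which is the required bound. Without some such drift-uniform control of the expected return mass to a small ball, your upper bound is not established; with it, your proof closes and coincides with the paper's.
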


%\begin{remark}\label{rem:high}\rm{
%Even in higher dimensions Theorem~\ref{thm:detection} is tight up to the constant in
%the exponent when $u$ moves as an independent Brownian motion; specifically, the argument
%we use to prove the theorem shows that in this case
%$\pr{\tdet > t} = \exp\left(-\Theta(t)\right)$.
%}
%\end{remark}
%\NOTE{remark for other dimensions}
\begin{remark}\label{rem:highdim}
\rm{
Theorem~\ref{thm:detection} is stated for $d=2$. In Section~\ref{sec:detection} it is extended to all dimensions $d\geq 1$ (see Theorem~\ref{thm:detection2}),
where the tail of $\tdet$ is shown to be $\exp(-\Theta(\sqrt{t}))$ for $d=1$ and $\exp(-\Theta(t))$ for $d\geq 3$.
All these results exploit a connection between the detection time and the volume of the Wiener sausage, as explained in Section~\ref{sec:detection}.
}
\end{remark}

\begin{remark}\rm{
From \cite{SKM95,Kesidis}, we have that for $d=2$ the result in \eqref{eq:detbm} also holds when $u$ does \emph{not} move (i.e., $g\equiv 0$). Thus
Theorem~\ref{thm:detection} establishes that, asymptotically, the best strategy for a particle $u$ (that is not informed of the motion of the nodes
of $\Pi_0$) to avoid detection is to stay put. In Section~\ref{sec:detection} we show that this is true for $d=1$ for any fixed $t$ (not only asymptotically)
and conjecture that this holds in higher dimensions as well (see Conjecture~\ref{conj:wiener}).
}\end{remark}

\par\medskip\noindent
{\bf Coverage.}\ \ Let $A$ be a subset of $\rset^d$. We are interested in the time it takes for \emph{all} the points of $A$ to be detected. We thus define
\[
\taucov(A) = \inf\Big\{t \geq 0: A \subset \bigcup_{s \leq t}\bigcup_{i}B(X_i + \zeta_i(s),r) \Big\}.
\]
For $R \in \rset_+$, let $Q_R$ be the cube in $\R^d$ of side length $R$.
A natural question proposed by Konstantopoulos~\cite{Konst} is to determine the asymptotics of $\E{\taucov(Q_R)}$ as $R \to \infty$.

%we consider the set $RA = \{Ra: a \in A\}$.
%A natural question proposed by Konstantopoulos~\cite{Konst} is to determine the asymptotics of $\E{\taucov(RA)}$ as $R \to \infty$ when the set $A$ is a cube.
%
%\noindent We first recall the definition of Minkowski dimension, which can be found in~\cite{Mattila95} for instance.
%\begin{definition}
%Let $A$ be a non-empty bounded subset of $\R^d$. For $\epsilon>0$ we define $M(A,\epsilon)$ to be the smallest number of balls of radius $\epsilon$ needed to cover $A$:
%\[
%M(A,\epsilon) = \min \left\{k: A \subset \bigcup_{i=1}^{k}B(x_i,\epsilon) \text{ for some } x_i \in \R^d\right\}.
%\]
%The Minkowski dimension is defined as
%\[
%\dim_M(A) = \lim_{\epsilon \to 0} \frac{\log M(A,\epsilon)}{\log\epsilon^{-1}},
%\]
%whenever this limit exists.
%\end{definition}

In Section~\ref{sec:coverage} we prove Theorem~\ref{thm:coverage} below,
which gives the asymptotics for the expected time to cover the set $Q_R$ as $R \to \infty$ and shows that $\taucov(Q_R)$ is concentrated around its expectation.
We write $f \sim g$ as $x \to \infty$, when $\frac{f(x)}{g(x)} \to 1$ as $x \to \infty$.

\begin{theorem}\label{thm:coverage}
We have that as $R\to \infty$
\[
E{\taucov(Q_R)} \sim \left\{\begin{array}{rl}
   \frac{\pi}{8\lambda^2}(\log R)^2& \text{for $d=1$}\\
   \frac{1}{\lambda \pi }\log{R} \log\log{R}  & \text{for $d=2$}\\
   \frac{d\log R}{\lambda c(d) r^{d-2}}  & \text{for $d\geq 3$}
   \end{array}\right. \quad \text{ and }\quad  \frac{\taucov(Q_R)}{\E{\taucov(Q_R)}} \to 1 \text{ in probability},
\]
where $c(d) = \frac{\Gamma\left(\frac{d}{2}-1 \right)}{2 \pi^{\frac{d}{2}}}$ and $\Gamma$~stands for the Gamma function.
\end{theorem}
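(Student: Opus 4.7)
Everything starts from a Poisson void identity. Marking each node $X_i$ of $\Pi_0$ with its Brownian path $\zeta_i$ turns $\{(X_i,\zeta_i)\}$ into a Poisson process on $\R^d\times C([0,\infty),\R^d)$ of intensity $\lambda\,dx\otimes d\mathbb{W}$, so a fixed target $x\in\R^d$ is undetected by time $t$ iff no marked point lies in $\{(y,\zeta):y\in x-W_r(\zeta,t)\}$, a region of total Lebesgue-times-Wiener measure $\E{|W_r(t)|}$ by translation invariance. Hence
\[
\pr{\tdet(x)>t}\;=\;\exp\bigl(-\lambda\,\E{|W_r(t)|}\bigr),
\]
where $W_r(t)=\bigcup_{s\le t}B(\zeta(s),r)$ is the standard Wiener sausage. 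Plugging in the classical asymptotics $\E{|W_r(t)|}\sim\sqrt{8t/\pi}$ in $d=1$, Spitzer's $2\pi t/\log t$ in $d=2$, and $c(d)\,r^{d-2}t$ in $d\ge3$, the three constants $c_R$ in the theorem are precisely the values of $t$ at which $R^d\exp(-\lambda\,\E{|W_r(t)|})$ crosses from much larger than one to much smaller than one.

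\textbf{Upper bound.} I would take a grid $N_\epsilon\subset Q_R$ of spacing $\epsilon$: every point of $Q_R$ lies within $\tfrac12\epsilon\sqrt d$ of some $y\in N_\epsilon$, so detection of $y$ by a sausage of radius $r-\tfrac12\epsilon\sqrt d$ implies coverage of its surrounding cube by the radius-$r$ sausage. A union bound then yields
\[
\pr{\taucov(Q_R)>t}\;\le\;(R/\epsilon)^d\exp\bigl(-\lambda\,\E{|W_{r-\epsilon\sqrt d/2}(t)|}\bigr),
\]
which for $t=(1+\delta)c_R$ and $\epsilon=\epsilon(R)\to0$ slowly is $o(R^{-d\delta/2})$, since the Wiener-sausage asymptotic is continuous in $r$. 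Integrating the tail gives $\E{\taucov}\le(1+\delta+o(1))c_R$, and sending $\delta\to0$ finishes the upper half of the expectation bound; the same tail estimate shows $\pr{\taucov>(1+\delta)c_R}\to0$, which is one half of the concentration statement.

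\textbf{Lower bound via Paley--Zygmund.} Place a sparse grid $\{x_1,\ldots,x_M\}\subset Q_R$ of spacing $D=R^{\delta/2}$, and set $X_i=\ind{\tdet(x_i)>t^*}$, $t^*=(1-\delta)c_R$, $N=\sum_i X_i$. A second application of the void identity, this time to the union of two target-tubes, yields
\[
\E{X_iX_j}\;=\;\E{X_i}^2\exp\bigl(\lambda\,K(x_j-x_i,t^*)\bigr),\quad K(v,t):=\E{|W_r(t)\cap(W_r(t)-v)|}.
\]
A short calculation with the three Wiener-sausage asymptotics confirms that with these choices $\mu:=\E{N}\to\infty$ in every dimension. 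Provided one can show $\lambda K(v,t^*)\to0$ uniformly for $|v|\ge D$ and $M^{-2}\sum_{i\ne j}\lambda K(x_j-x_i,t^*)\to0$, Paley--Zygmund gives $\pr{N\ge1}\to1$, hence $\pr{\taucov>t^*}\to1$. Combined with the upper-bound tail, this yields both $\E{\taucov}\ge(1-o(1))c_R$ and $\taucov/\E{\taucov}\to1$ in probability.

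\textbf{Main obstacle: controlling the two-sausage overlap $K(v,t^*)$.} The choice $D=R^{\delta/2}$ makes $D\gg\sqrt{t^*}$ in every dimension, i.e., grid points are separated by far more than the typical Brownian displacement in time $t^*$. In $d\ge3$, transience and the hitting bound $\pr{\text{BM hits }B(z-v,r)\mid\text{hits }B(z,r)}\lesssim(r/|v|)^{d-2}$ give $K(v,t)\lesssim t\,r^{2(d-2)}/|v|^{d-2}$, whose lattice sum truncated at diameter $R$ contributes $M^{-2}\sum\lambda K\lesssim\lambda t^*/R^{d-2}=o(1)$. In $d=1,2$ the BM is recurrent, but only the hitting probabilities over the finite horizon $t^*$ matter: Gaussian and heat-kernel tails give $K(v,t^*)\lesssim t^*\exp(-c|v|^2/t^*)$, which is super-polynomially small for $|v|\ge D$ (since $D^2/t^*$ is at least a small positive power of $R$), so the lattice sum is negligible. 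In every case the variance term vanishes, completing the Paley--Zygmund argument.
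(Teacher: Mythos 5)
Your proposal is correct and follows essentially the same route as the paper: the Poisson void identity reducing non-detection to the expected Wiener-sausage volume, a first-moment union bound over a fine grid for the upper bound, and a second-moment (Paley--Zygmund) argument for the lower bound whose crux is bounding the overlap $\E{\vol{W_r(t)\cap(W_r(t)-v)}}$ via Gaussian tails in $d\le 2$ and transience in $d\ge 3$. The only notable difference is that you take a sparse grid of spacing $R^{\delta/2}$ so that all pairs of test points are far apart, whereas the paper packs at constant spacing and disposes of nearby pairs (distance at most $(\log R)^2$) with a separate crude bound; both work, and the paper additionally carries the same argument through for general sets of prescribed Minkowski dimension.
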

%\Note{Didn't change the remark. The sentence about covering a ``line segment'' is irrelevant for $d=1$, but I don't see a problem to leave like that}
\begin{remark}
\rm{
Instead of covering a whole cube, we could ask for the coverage time of other sets. We prove Theorem~\ref{thm:coverage} in this general setting in
Section~\ref{sec:coverage}; for instance, we show that $\tcov$  for a line segment of length $R$ is smaller than $\E{\taucov(Q_R)}$ by a factor of
$\frac{1+o(1)}{d}$ and also obtain asymptotics for fractal sets (see Theorem~\ref{thm:coverage2}).
}
\end{remark}

\par\medskip\noindent
{\bf Percolation.}\ \
%A fundamental question in mobile networks is whether a node can efficiently
%communicate with other nodes even when the network is not connected at any given time.
%In the MGG model, this question may naturally be formulated by considering
%a constant intensity $\lambda>\lambda_\critical$ (i.e., above the percolation threshold)
%and asking how long it takes until a node initially at the origin belongs to the giant
%component (or the infinite component in the limit $n\to\infty$).  We call this
%the {\it percolation time}~$\tperc$.  It should be clear that the percolation time can be used
%to derive bounds on other natural quantities, such as the time for a node to
%broadcast information to all other nodes (see Corollary~\ref{cor:broadcast}
%below).  As far as we are aware the
%percolation time has not been investigated before, largely because previous work on
%RGGs has focused on networks above the connectivity threshold.  However,
%it appears to be a fundamental question in the mobile context.
Let $u$ be an extra node initially at the origin and which moves independently of the nodes of $\Pi_0$ according to some function $g$.
We now investigate the time it takes until $u$ belongs to the infinite connected component. We denote this time by $\tperc$, which can be more formally written as
\[
\tperc = \inf\{t\geq 0: \exists y \in C_{\infty}(t) \text{ s.t. } \|g(t)-y\|_2\leq r \}.
\]

The detection time clearly provides a lower bound on the percolation time,
so we may deduce from Theorem~\ref{thm:detection} and Remark~\ref{rem:highdim} above that
$\pr{\tperc > t}$ is at least $\exp\left(-O(t/\log t)\right)$ for $d=2$
and at least $\exp\left(-O(t)\right)$ for $d \geq 3$, when $u$ is non-mobile or moves according to an independent Brownian motion. We will
prove the following stretched exponential upper bound in all dimensions
$d\geq 2$ in Section~\ref{sec:percolation}:

\begin{theorem}\label{thm:percolation}
   For all dimensions $d\geq 2$, if $\lambda>\lambda_\critical(d)$ then there exist constants $c$ and $t_0$, depending only on~$d$, such that
   $$
      \pr{\tperc > t} \leq \exp\left(-c\frac{\lambda t}{\log^{3+6/d}t}\right), \text{ for all } t\geq t_0.
   $$
    This holds when $u$ is non-mobile or moves according to an independent Brownian motion.
\end{theorem}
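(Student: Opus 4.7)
The plan is to partition $[0,t]$ into $N=\lceil t/\Delta\rceil$ disjoint windows $I_k=[(k-1)\Delta,k\Delta]$ of length $\Delta$ polylogarithmic in $t$, and to show that in each window $u$ joins the infinite cluster with probability at least an absolute constant $p=p(\lambda,d)>0$, up to small coupling errors. Chaining the $N$ windows then yields $\pr{\tperc>t}\le (1-p)^N+\text{err}\le \exp(-c\lambda t/\log^{3+6/d}t)$ once $\Delta$ is set to $\Theta(\log^{3+6/d}t)$.

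For a single window I argue in three sub-steps. First, a \emph{static percolation criterion}: since $\lambda>\lambda_\critical(d)$, supercritical Boolean percolation gives a constant $\theta>0$ such that a typical node of $\Pi_s$ lies in $C_\infty(s)$ with probability $\theta$, and finite clusters have exponential tails---a cluster of diameter $\ge L$ is finite only with probability at most $e^{-cL}$. Call a node \emph{$L$-good} at time $s$ if its cluster in $G_s$ has diameter at least $L$; this is a local event that implies $C_\infty(s)$-membership up to error $e^{-cL}$. Second, Theorem~\ref{thm:detection} and its $d\ge 3$ analog give $\pr{\tdet>\Delta}\ll 1/t^2$ for $\Delta$ a sufficiently large polylog in $t$; let $\sigma_k\in I_k$ be the first detection time of $u$ inside the window and $V_k\in\Pi_{\sigma_k}$ the detecting node. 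Third, by the Palm formula the remainder of $\Pi_{\sigma_k}$ conditionally on $(\sigma_k,V_k)$ is still an independent Poisson process of intensity $\lambda$, so $V_k$ is $L$-good with probability $\theta-o(1)$. Intersecting the three events yields a per-window success probability of at least $\theta/2$.

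The main obstacle is \emph{decorrelating the windows}: the same Brownian trajectories drive the nodes throughout $[0,t]$, and $C_\infty$ is a global object, so independence is not automatic. My approach is to use the local surrogate from the first sub-step to confine the success event of $I_k$ to the configuration of $\Pi$ inside a space-time tube $T_k$ of spatial radius $W=\Theta(\sqrt{\Delta}+L)$ about the trajectory of $u$ during $I_k$, and then couple the positions of the nodes entering $T_k$ to a fresh Poisson sample driven by independent Brownian motions. Because nodes spread by distance $\sqrt{\Delta}\gg L$ from one window to the next, nodes that were relevant to $T_{k-1}$ are very unlikely to be relevant to $T_k$; a Gaussian tail estimate together with a union bound over the volume $W^d$ of $T_k$ makes the coupling error per window polynomially small in $t$. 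Balancing the total error budget $Ne^{-cL}+N/t^2+N\cdot(\text{coupling error})$ against the product $(1-p)^N$ forces $L=\Theta(\log t)$ and, after factoring in the spatial scan cost $W^d$, $\Delta=\Theta(\log^{3+6/d}t)$. The delicate setup and calibration of this multi-scale coupling against the competing spatial and temporal scales is where the bulk of the technical work lies, and it is the reason the exponent $3+6/d$ (rather than the better exponent $1$ suggested by detection alone) appears in the final bound.
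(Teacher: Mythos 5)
Your high-level skeleton (windows of length $\Delta=\Theta(\log^{3+6/d}t)$, a constant per-window success probability, a product over windows) matches the paper's, and your ``static percolation criterion'' via large-diameter clusters is essentially how the paper certifies membership in $C_\infty$ (it uses crossing components of $Q_t$ and the Penrose--Pisztora uniqueness results). But the decorrelation step, which you correctly identify as the main obstacle, does not work as you describe it. A node lying in the tube $T_{k-1}$ at the end of window $k-1$ moves only $O(\sqrt{\Delta})$ during window $k$, while $T_k$ has radius $W=\Theta(\sqrt{\Delta}+L)$ about a trajectory of $u$ that has itself barely moved (indeed $u$ may be non-mobile). So a constant fraction of the nodes relevant to $T_{k-1}$ remain relevant to $T_k$: the Gaussian tail estimate gives no smallness here, and the per-window coupling error to a fresh Poisson sample is not polynomially small. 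Worse, the event you implicitly condition on (failure in all previous windows) biases the configuration near $u$ toward sparsity, and the probability that a neighbourhood of $u$ stays atypically sparse for an extended stretch of time is exactly the quantity that cannot be controlled by a union bound over windows, because these sparsity events are strongly correlated across windows through the shared initial configuration.

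The paper resolves this with two ingredients your proposal lacks. First, a multi-scale density proposition (Proposition~\ref{pro:density}): tessellating $Q_t$ into cells of side $\ell$ with $\ell^d\ge C\log^3 t$, all cells contain at least $(1-\xi)\lambda\ell^d$ nodes for a $(1-\epsilon)$ fraction of the integer times, except on an event of probability $\exp(-c\lambda t/\log^{3+6/d}t)$ --- and it is this error term, accumulated over $\kappa=O(\log t)$ scales, that produces the exponent $3+6/d$, not a spatial scan cost $W^d$. Second, an oblivious coupling (Proposition~\ref{pro:ss}): given density at a time $\tau_i$, after $\Delta=\Theta(\ell^2)$ further steps the configuration stochastically dominates a Poisson process of intensity just below $(1-\xi)\lambda$ that is genuinely \emph{independent} of the past, because the transition density used in the coupling is a deterministic lower bound that does not depend on the actual node locations. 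This is what legitimizes multiplying per-window failure probabilities. Your Palm-theory step has a related flaw: conditioning on $\sigma_k$ being the \emph{first} detection time in the window is negative information that thins the process along $u$'s past trajectory, so the remainder of $\Pi_{\sigma_k}$ is not an unconditioned Poisson process. Without replacements for these ingredients the argument does not close.
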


Theorem~\ref{thm:percolation} is the main technical contribution of the paper;
we briefly mention some of the ideas used in the proof.  The key technical
challenge is the dependency of the $G_s$'s over time.  To overcome
this, we partition~$\rset^d$ into subregions of suitable size and
show via a multi-scale argument that all such subregions contain sufficiently
many nodes for a large fraction of the time steps. This is the content of Proposition~\ref{pro:density}
which we believe is of independent interest. This result allows us to
{\it couple\/}
the evolution of the nodes in each subregion with those of a fresh Poisson
point process of slightly smaller intensity $\lambda'<\lambda$ which is still
larger than the critical value~$\lambda_\critical$.  After
a number of steps~$\Delta$ that depends on the size  of the subregion,
we are able to guarantee that the coupled processes match up almost
completely.  As a result, we can conclude that
there are $\Theta(t/\Delta)$ time steps for which the mobile geometric graph contains
an \emph{independent}
Poisson point process with intensity~$\lambda''>\lambda_\critical$.
This fact, which we believe is of wider applicability, is formally stated
in Proposition~\ref{pro:ss}.  This
independence is sufficient to complete the proof.

Finally, to illustrate a sample application of Theorem~\ref{thm:percolation}, we consider
the time taken to broadcast a message in a network of finite size.
Consider a mobile geometric graph in a cube of volume~$n/\lambda$ (so the expected\footnote{The
result can be adapted to the case of a {\it fixed\/} number of nodes~$n$ using
standard ``de-Poissonization" arguments~\cite{Penrose}.  See the Remark
following the proof of Corollary~\ref{cor:broadcast} in Section~\ref{sec:broadcast}.}
number of nodes is~$n$).  Since the volume is finite, we need to modify
the motion of the nodes to take account of boundary effects: following
standard practice, we do this by turning the cube into a torus (so that nodes
``wrap around" when they reach the boundaries).
Suppose a message originates at an arbitrary node
at time~0, and at each integer time step~$t$ each node that has already
received the message broadcasts it to all nodes in the same connected
component.  (Here we are making the reasonable assumption
that the speed of transmission is much faster than the motion of the nodes,
so that messages can travel throughout a connected component before it is
altered by the motion.)  Let $\tbroad$ denote the time until all nodes have
received the message. We prove the following result in Section~\ref{sec:broadcast}.
\begin{corollary}\label{cor:broadcast}
In a mobile geometric graph on the torus of volume~$n/\lambda$ with any fixed $\lambda > \lambda_\critical$,
the broadcast time $\tbroad$
is $O(\log n (\log \log n)^{3+6/d})$ w.h.p.\ in any dimension $d\ge 2$.
\end{corollary}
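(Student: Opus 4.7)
The plan is to apply Theorem~\ref{thm:percolation} twice --- first to the source node $v_0$, then, via a time shift, to every other node --- and to combine this with a ``persistence'' argument showing that once the giant component is reached by $v_0$ it stays entirely informed. Throughout let $\mathcal{C}(s)$ denote the largest connected component of $G_s$ on the torus, and let $I_s$ denote the set of informed nodes at integer time~$s$. For $\lambda > \lambda_\critical(d)$, standard Boolean-model theory gives $|\mathcal{C}(s)| = \theta(\lambda)n(1+o(1))$ w.h.p.\ at any fixed time. Set $T = C\log n\,(\log\log n)^{3+6/d}$ for a constant $C = C(\lambda,d)$ large enough that Theorem~\ref{thm:percolation} at time $T/3$ gives a tail of at most $n^{-3}$; the aim is to show $\tbroad \leq T$ w.h.p.

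The first step is to transfer Theorem~\ref{thm:percolation} to the present setting. Since $T$ is polylogarithmic in $n$, Brownian displacements over $[0,T]$ are $O(\sqrt{T\log n})=\mathrm{polylog}(n)$, which for $d\geq 2$ is much smaller than the torus side length $(n/\lambda)^{1/d}$; the proof of Theorem~\ref{thm:percolation} is local (through Propositions~\ref{pro:density} and~\ref{pro:ss}) and carries over verbatim to the torus with the infinite component replaced by $\mathcal{C}(\cdot)$. Moreover, the $\Theta(T/\Delta)$ well-separated ``good'' times supplied by Proposition~\ref{pro:ss} can be placed on the integer grid, so the theorem yields the existence of an \emph{integer} time $s$ at which $u \in \mathcal{C}(s)$; together with Slivnyak's theorem, which lets us apply the bound to a typical node of $\Pi_0$ rather than an extra particle, this gives
\[
\pr{\,u \notin \mathcal{C}(s) \text{ for every integer } s \in [s_0, s_0 + T/3]\,} \leq n^{-3}
\]
for every node $u$ and every integer time $s_0$. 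A union bound then produces, w.h.p., an integer $\sigma_0 \in [0, T/3]$ with $v_0 \in \mathcal{C}(\sigma_0)$, and for every other node $u$ an integer $s_u \in [\sigma_0, \sigma_0 + T/3]$ with $u \in \mathcal{C}(s_u)$.

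Next I would prove by induction on $s$ that $\mathcal{C}(s) \subset I_s$ for every integer $s \in [\sigma_0, T]$. The base case $s=\sigma_0$ holds because $v_0 \in I_{\sigma_0}$ broadcasts to all of $\mathcal{C}(\sigma_0)$. For the inductive step it suffices that $\mathcal{C}(s-1)\cap\mathcal{C}(s) \neq \emptyset$, since then any node in that intersection is already informed and propagates the message to the whole of $\mathcal{C}(s)$ at time~$s$. Each node of $\mathcal{C}(s-1)$ remains in $\mathcal{C}(s)$ with probability bounded below by a positive constant (by stationarity, independence of the Brownian increments, and $\lambda>\lambda_\critical$), so a standard concentration argument yields $|\mathcal{C}(s-1)\cap\mathcal{C}(s)| = \Theta(n)$ w.h.p.\ for each fixed~$s$; a union bound over the $O(T)$ relevant values of~$s$ closes the induction. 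Combined with the previous step, this gives $u \in \mathcal{C}(s_u) \subset I_{s_u}$ for every node $u$, so $\tbroad \leq s_u \leq T = O(\log n\,(\log\log n)^{3+6/d})$ w.h.p.

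The main obstacle is the integer-time, Palm-shifted version of Theorem~\ref{thm:percolation} used above. The torus reduction itself is essentially free because all spatial scales in the proof are polylogarithmic, and time shifts are handled by stationarity; the delicate point is to look inside the proof of Theorem~\ref{thm:percolation} and verify that the ``good'' times furnished by Proposition~\ref{pro:ss} can be aligned with the integer grid on which the broadcast updates --- without this, one would control only continuous-time percolation for each node and it would not be clear that any node is in $\mathcal{C}(s)$ at a moment when the broadcast actually fires.
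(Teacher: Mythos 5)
Your overall architecture is essentially the paper's: transfer Theorem~\ref{thm:percolation} to the torus (legitimate because every spatial scale in its proof is polylogarithmic in $n$, hence the construction fits inside $S_n$), use Palm theory to apply the tail bound to each of the $\approx n$ actual nodes with failure probability $o(1/n)$, and combine a ``source reaches the giant component'' phase with an ``every node visits the giant component'' phase, glued together by the fact that the giant components at consecutive integer times share a node. Your worry about aligning the ``good'' times with the integer grid is already moot: the proof of Theorem~\ref{thm:percolation} is formulated entirely in discrete time (it bounds $\pr{\cap_{i=0}^{t-1}J_i}$ where $J_i$ concerns the configuration at integer time $i$), so it directly yields membership in the (giant) component at an integer time.

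The genuine gap is the overlap step. You assert $|\mathcal{C}(s-1)\cap\mathcal{C}(s)|=\Theta(n)$ w.h.p.\ ``by a standard concentration argument'' from the claim that each node of $\mathcal{C}(s-1)$ lies in $\mathcal{C}(s)$ with constant probability. But the indicators $\ind{v\in\mathcal{C}(s)}$ for $v\in\mathcal{C}(s-1)$ are all functions of the same point process and the same Brownian increments, hence strongly dependent, and conditioning on $v\in\mathcal{C}(s-1)$ biases the configuration near $v$, so even the marginal bound is not clean; no off-the-shelf concentration inequality applies. You also need the per-step failure probability to be $o(1/T)$ for the union bound over the $O(T)$ steps, so a qualitative ``w.h.p.'' is not enough. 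The paper's device is to thin $\Pi_i$ into $\Pi'_i$ of intensity $(1-\epsilon)\lambda>\lambda_\critical$ and $\Pi''_i$ of intensity $\epsilon\lambda$: the graphs on $\Pi'_i$ and $\Pi'_{i+1}$ each have a unique giant component with probability $1-e^{-\Theta(n^{1/d})}$, each node of $\Pi''_i$ lies in both with probability at least $c^2$ by FKG, and---crucially---this probability does not depend on $\Pi''_i$, so the witnesses form a Poisson process of intensity $\epsilon\lambda c^2$ and at least one exists with probability $1-e^{-\Omega(n)}$. You need this (or some equivalent decoupling); note that only nonemptiness of the intersection is required, not $\Theta(n)$ overlap. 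A secondary, easily repaired point: applying the percolation bound on the random window $[\sigma_0,\sigma_0+T/3]$ requires a union bound over the possible values of $\sigma_0$; it is cleaner to use the deterministic windows $[0,t-1]$ and $[t,2t-1]$ and invoke stationarity, as the paper does.
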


%%%%%%%%%%
\section{Motivation and related work}\label{sec:motivation}
%\Note{Trimmed some sentences in this section. Used term ``speed'' instead of ``range of motion per step'' since time %is continuous in our discussion. Changed
%sentences like ``the main motivation of this paper...'' to ``a possible application...''}
A possible application of our results is in the study of mobile ad hoc networks, where
nodes moving in space
cooperate to relay packets on behalf of other nodes without any centralized
infrastructure.  This is the case, for example, in vehicular
networks (where sensors are attached to cars, buses or taxis), surveillance
and disaster recovery applications (where mobile sensors are used to
survey an area), and pocket-switched networks based on mobile communication
devices such as cellphones.

Random geometric graphs have long been used as a model for {\it static\/}
wireless networks, and by now their structural and algorithmic
properties are rather well understood mathematically.
We refer the reader to the
book~\cite{Penrose} for extensive background on random geometric graphs and
to~\cite{SS10} for a selective survey of applications.  We should mention
that in many papers the model is defined over a torus $S\subset \R^d$
of finite volume~$n/\lambda$, so that the expected number of nodes is~$n$.
We choose to work in the infinite volume~$\rset^d$, which is mathematically
cleaner, but most results obtained
there can be adapted to finite volumes with a little technical work; see
Section~\ref{sec:broadcast} for an example.  In the finite setting it makes
sense to talk about the random geometric graph being connected, which is an important property
when the nodes are static.  This occurs at a sharp threshold value of~$\lambda$,
which however grows with~$n$~\cite{GK1,Penrose1,Penrose2}.

The scope of mathematically rigorous work with mobile nodes
is much more limited, and there is as yet no widespread agreement on
an appropriate model for node mobility.  The model
we use in this paper is equivalent to the ``dynamic boolean model"
introduced by Van den Berg et al.~\cite{vandenberg} (who also
proved that almost surely an infinite component exists at \emph{all} times
if $\lambda>\lambda_\critical$).  We point out that
in this model, in contrast to many others, node mobility is fixed and does
not depend on~$n$ (the number of nodes in a finite network).

% Both the \emph{detection} and \emph{coverage} problems have received
% much attention.  In the static case, a non-mobile target is detected w.h.p.\
% only when the expected degree grows with~$n$,
% while coverage requires the stronger condition that the network be connected.
% In the absence of coverage, Balister et al.~\cite{Bal} determine the  maximum
% diameter of the uncovered regions, while Dousse et al.~\cite{Dousse} prove that, for
% any $\lambda>0$, the detection time for a target moving in a fixed
% direction has an exponential tail.
% (Note that this is not a mobility result as the nodes are fixed.)

The \emph{detection time} was addressed
by Liu et al.~\cite{Liu}, assuming that each node moves continuously
in a {\it fixed\/} randomly chosen direction; they show that the time it takes for
the network to detect a target is exponentially distributed with expectation
depending on the intensity~$\lambda$.  Also, for the special case of
a stationary target, as observed in~\cite{Kesidis,Konst} the detection time
can be deduced from
classical results on continuum percolation: namely, in this case it follows
from~\cite{SKM95} that
$\mathbf{Pr}[\tdet \geq t] = \exp(-\lambda \E{\vol{W_r(t)}})$, where $W_r(t)$ is the
``Wiener sausage" of radius~$r$ up to time~$t$.  This
volume in turn is known quite precisely~\cite{Spitzer,BMS}.
In a different model, in which nodes perform continuous-time random walks
on the square lattice, Moreau et al.~\cite{MOBC} (see also~\cite{DGRS})
establish that the best strategy for a target to avoid detection is
to stay put. (In these papers, a target is said to be detected when there exists a
node of the graph at the \emph{same} lattice location as the target.)
The analysis of the \emph{coverage time} was suggested as an open problem
by Konstantopoulos~\cite{Konst}.

The {\it percolation time\/} was first studied in~\cite{SS10}, which also examined
detection for moving targets.
This paper is a strengthened version of~\cite{SS10}, with tighter results on
percolation and detection as well as the addition of coverage.

The question of \emph{broadcasting} in mobile graphs has been studied by several authors
in a setting where messages travel only to immediate neighbors at each time step.
Clementi et al.~\cite{clementi} establish tight bounds for the broadcast time in
this setting assuming that either the intensity $\lambda$ or
the range of motion of the nodes grows with~$n$.
The case of smaller intensities and bounded range of motion was studied by
Pettarin et al.~\cite{PPPU10}.
A problem similar to broadcast was studied by
Kesten and Sidoravicius~\cite{KestenSidoravicius}, who derived the rate at which an infection
spreads through nodes that are performing continuous-time random walks on the square lattice.

% We conclude by mentioning a few other rigorous results on mobile networks.
% Motivated by the classical fact~\cite{GK2} that the {\it capacity\/} of static
% networks goes to zero as $n\to\infty$, Grossglauser and Tse~\cite{GT} (see
% also~\cite{DGT,elgamal}) showed how to exploit mobility to achieve constant
% capacity using a two-hop routing scheme.  However, these results require
% that  nodes move a distance comparable to the diameter of the entire
% region~$S$ at each step.
% And recent work of D\'iaz et al.~\cite{DiazFinal} in a model similar to ours
% determines, for
% networks exactly at the connectivity threshold, the expected length of time for which
% the network  stays connected (or disconnected) as the nodes move.

%############################################################################################
%############################################################################################
%############################################################################################
\section{Detection time}\label{sec:detection}
In this section we give the proof of Theorem~\ref{thm:detection}.
We first state a generalization of a well-known result~\cite{SKM95},
which we will use in several proofs; we include its proof here for the sake of completeness.
\begin{lemma}\label{lem:detection}
Suppose that $u$ starts from the origin at time $0$ and its position at time $s$ is given by a deterministic function $g(s)$.
Let $W_g(t) = \cup_{s \leq t}B(g(s)-\zeta(s),r)$ be the so-called ``Wiener sausage with drift'' up to time $t$. Then,
for any dimension $d\geq1$,
%\Note{added ``for any dimension $d\geq 1$''}
the detection probability satisfies
\[
\pr{\tdet > t} =\exp(- \lambda \E{\text{\rm{vol}}(W_g(t))}),
\]
where $\vol{A}$ stands for the Lebesgue measure of the set $A$ in $\R^d$.
\end{lemma}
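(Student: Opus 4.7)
The plan is to translate the event $\{\tdet > t\}$ into a void event for a suitable marked Poisson point process, and then invoke the Poisson void probability formula.

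First, I would rewrite detection in a ``node-centric'' way. Since $g(s) \in B(X_i + \zeta_i(s), r)$ is equivalent to $X_i \in B(g(s) - \zeta_i(s), r)$, we can write
\[
\{\tdet > t\} = \bigcap_i \bigl\{X_i \notin W_g^{(i)}(t)\bigr\}, \qquad \text{where } W_g^{(i)}(t) := \bigcup_{s \leq t} B\bigl(g(s) - \zeta_i(s), r\bigr).
\]
Thus the $i$-th node detects the target by time $t$ precisely when its initial position $X_i$ lies in its own ``Wiener sausage with drift $g$,'' a random set determined entirely by $\zeta_i$.

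Next, I would view $\{(X_i, \zeta_i)\}_i$ as a marked Poisson point process on $\R^d \times C([0,\infty),\R^d)$ with intensity measure $\lambda\, dx \otimes \mathbf{W}$, where $\mathbf{W}$ is Wiener measure on path space (this just encodes that the $\zeta_i$ are i.i.d.\ marks attached to the points of $\Pi_0$, independent of the $X_j$'s). The event above says that this marked process has no points in the deterministic measurable set $\mathcal{A} = \{(x,\omega) : x \in \bigcup_{s \leq t} B(g(s) - \omega(s), r)\}$. The Poisson void probability formula then yields
\[
\pr{\tdet > t} = \exp\bigl(-\lambda\,(dx \otimes \mathbf{W})(\mathcal{A})\bigr),
\]
and Fubini identifies $(dx \otimes \mathbf{W})(\mathcal{A})$ with $\int \vol{\bigcup_{s \leq t} B(g(s) - \omega(s), r)}\, d\mathbf{W}(\omega) = \E{\vol{W_g(t)}}$, giving the stated identity.

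There is no real technical obstacle: the argument is an application of independence of disjoint regions in a Poisson process, lifted to path space via the marking theorem. The only routine items to verify are the measurability of $\mathcal{A}$ (clear, since $g$ and $\omega$ are continuous, so $W_g(t)$ is open) and the finiteness of $\E{\vol{W_g(t)}}$ (which follows because $g$ is bounded on $[0,t]$ and $\sup_{s \leq t}\|\zeta(s)\|$ has Gaussian tails, so $W_g(t)$ is a.s.\ contained in a ball of integrable volume).
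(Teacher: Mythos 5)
Your proposal is correct and takes essentially the same route as the paper: the paper phrases your marked-process/void-probability step as a thinning of $\Pi_0$ with location-dependent retention probability $\Lambda(x)/\lambda=\pr{x\in\cup_{s\le t}B(g(s)-\zeta(s),r)}$, and then evaluates the void probability of the thinned process, which after Fubini is exactly your computation.
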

\begin{proof}
Let $\Phi$ be the set of points of $\Pi_0$ that have detected $u$ by time $t$, that is
\[
\Phi = \{X_i \in \Pi_0:  \exists s \leq t \text{ s.t. } g(s) \in B(X_i + \zeta_i(s), r)\}.
\]
Since the $\zeta_i$'s are independent we have that $\Phi$ is a thinned Poisson point process with intensity given by
\begin{align*}
\Lambda(x) = \lambda \pr{ x \in \cup_{s \leq t} B(g(s)-\zeta(s),r)},
\end{align*}
where $\zeta$ is a standard Brownian motion.

So for the probability that the detection time is greater than $t$ we have that
\begin{align*}
\pr{\tdet > t}
%= \pr{\Phi(\rset^d) = 0}
= \exp(-\lambda \int_{\rset^d} \pr{x \in \cup_{s \leq t}B(g(s)-\zeta(s),r)} dx)
\\ =\exp(-\lambda \E{\text{vol}(\cup_{s \leq t}B(g(s)-\zeta(s),r))}) = \exp(-\lambda \E{\vol{W_g(t)}}).
\end{align*}
%where $\Phi(A)$ stands for the number of nodes of $\Phi$ that lie in the set~$A$.
\end{proof}

\begin{remark}\label{rem:moving}
\rm{
The preceding lemma implies that
when the motion $g$ of $u$ is random and independent of the motions of the nodes of the Poisson point process $\Pi_0$ then
\[
\pr{\tdet > t} =\E{\exp(- \lambda \E{\text{\rm{vol}}(W_g(t))\mid(g(s))_{s \leq t}})}.
\]
}
\end{remark}

\begin{remark}\label{rem:comp}
\rm{We note that the above proof can be easily generalized to show that the time $\tdet(K)$ until we detect some
 point in a compact set $K \subset \R^d$ satisfies
\begin{equation}\label{detcompact}
\pr{\tdet(K) > t} = \exp(-\lambda \E{\vol{\cup_{s \leq t}(K^r - \zeta(s))}}),
\end{equation}
where $K^r$ stands for the $r$-enlargement of $K$, i.e. $K^r = \cup_{x \in K} B(x,r)$.
}
\end{remark}

From Lemma~\ref{lem:detection} we see that estimating $\pr{\tdet > t}$ translates to deriving estimates for $\E{\vol{W_g(t)}}$.
When $u$ does not move (i.e., $g\equiv 0$), it is well known that in two dimensions~\cite{Spitzer,BMS}
$$
   \E{\vol{W_0(t)}} = \frac{2\pi t}{\log t}(1+o(1)).
$$
The following Lemma implies that $\E{\vol{W_g(t)}}\geq \E{\vol{W_0(t)}}(1-o(1))$ for any deterministic continuous $g$.

\begin{lemma}\label{lem:voldrift}
Let $\zeta$ be a standard Brownian motion in two dimensions and let $g$ be a deterministic continuous function, $g: \rset_+ \to \rset^2$.
Let $W_g(t) = \cup_{s\leq t}B(g(s)-\zeta(s),r)$ be a Wiener sausage with drift $g$ up to time $t$. We then have that as $t\to\infty$
%\Note{Removed liminf from display eq.}
\[
\E{\vol{W_g(t)}}\geq \frac{2 \pi t}{\log{t}}(1-o(1)).
\]
\end{lemma}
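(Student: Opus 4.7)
The plan is to exploit the strong Markov property of $\zeta$ to reduce, on subintervals of $[0,t]$ on which the drift oscillates by at most $r/2$, to an independent drift-free Wiener sausage of radius $r/2$, and then combine these local contributions using Spitzer's classical two-dimensional asymptotic. A convenient feature of the two-dimensional setting is that $\E{\vol{W_0(s)}} \sim 2\pi s / \log s$ to leading order regardless of the ball radius, so shrinking $r$ to $r/2$ costs nothing asymptotically.

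First I would use the uniform continuity of $g$ on $[0,t]$ to define stopping times $\tau_0 = 0$ and $\tau_{i+1} = \inf\{s > \tau_i : |g(s) - g(\tau_i)| > r/2\} \wedge t$, producing a finite partition $0 = \tau_0 < \tau_1 < \cdots < \tau_k = t$. On each subinterval $J_i := [\tau_i, \tau_{i+1}]$ of length $\ell_i := \tau_{i+1} - \tau_i$, the strong Markov property yields a fresh Brownian motion $\zeta^{(i)}(u) := \zeta(\tau_i + u) - \zeta(\tau_i)$ independent of $\mathcal{F}_{\tau_i}$, and since $|g(s) - g(\tau_i)| \le r/2$ on $J_i$ the triangle inequality gives
\[
B(g(s) - \zeta(s), r) \;\supseteq\; g(\tau_i) - \zeta(\tau_i) + B(-\zeta^{(i)}(s - \tau_i), r/2)
\]
for every $s \in J_i$. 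Hence $W_g(J_i)$ contains a translate of the fresh, drift-free sausage $\tilde W_i := \bigcup_{u \le \ell_i} B(-\zeta^{(i)}(u), r/2)$, and taking volumes and expectations yields $\E{\vol{W_g(J_i)}} \ge \E{\vol{\tilde W_i}} \sim 2\pi \ell_i/\log \ell_i$ as $\ell_i \to \infty$.

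Next I would combine the subinterval bounds. If the longest piece $J_{i^*}$ has length $\ell^* = t(1-o(1))$, applying the previous step to that single subinterval already gives $\E{\vol{W_g(t)}} \ge \E{\vol{W_g(J_{i^*})}} \ge \frac{2\pi t}{\log t}(1-o(1))$. Otherwise $g$ oscillates enough that all $\ell_i = o(t)$ and there are many subintervals; then one invokes inclusion--exclusion,
\[
\E{\vol{W_g(t)}} \;\ge\; \sum_i \E{\vol{W_g(J_i)}} \;-\; \sum_{i \ne j} \E{\vol{W_g(J_i) \cap W_g(J_j)}},
\]
where the first sum is asymptotically $2\pi t/\log t$ by the previous step (this uses convexity of $s\mapsto s/\log s$ for large $s$ to handle uneven $\ell_i$'s).

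The main obstacle is controlling the pairwise overlaps in the second sum, and showing they sum to $o(t/\log t)$. I would combine two observations: the translated subsausages centered at $g(\tau_i) - \zeta(\tau_i)$ are typically spatially spread out because the Brownian positions $\zeta(\tau_i)$ diffuse on the same scale $\sqrt{\ell_i}$ as the subsausage diameter, and the deterministic shifts $g(\tau_i)$ only increase the separation; and each subsausage has local density only $\sim 1/\log t$ inside its effective support, so typical pairwise intersections have expected area $O(\ell_i/\log^2 t)$. The same kind of second-moment estimate underlies the asymptotic additivity $\E{\vol{W_0(t)}} \sim \sum_i \E{\vol{W_0(J_i)}}$ in the drift-free case, and the deterministic shifts from $g$ can only widen the separation between the pieces rather than narrow it.
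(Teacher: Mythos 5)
There is a genuine gap, and it sits exactly where you flag ``the main obstacle'': the pairwise overlap sum is \emph{not} $o(t/\log t)$ in general, and no refinement of the spreading heuristic can make it so. To see this concretely, take the drift-free case and split $[0,t]$ into $k=\sqrt{t}$ pieces of length $\ell_i=\sqrt{t}$. Then $\sum_i \E{\vol{\tilde W_i}} \sim k\cdot 2\pi\sqrt{t}/\log\sqrt{t} = 4\pi t/\log t$, while the union has expected volume $2\pi t/\log t$; hence the pairwise intersections must sum to at least $2\pi t/\log t$, i.e.\ they are of the \emph{same order} as the main term. This is not an artifact: it is precisely the neighbourhood-recurrence of planar Brownian motion that produces the $\log t$ in the denominator, and a drift $g$ oscillating on scale $r/2$ over time intervals of length $\sqrt{t}$ (or, worse, $g(s)=(s,0)$, which forces $\ell_i\equiv r/2$ so that the local asymptotic $2\pi\ell_i/\log\ell_i$ never applies and the Bonferroni bound can even go negative) is a perfectly admissible continuous function. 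Your dichotomy therefore only covers drifts that are essentially constant on a $(1-o(1))$-fraction of $[0,t]$. The auxiliary claims are also shaky: consecutive subsausages have diameter $\Theta(\sqrt{\ell_i})$ and their centres are displaced by $\Theta(\sqrt{\ell_i})$, so they overlap with probability bounded away from $0$; and a deterministic $g$ returning repeatedly to the origin does not ``widen the separation.''

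The paper avoids time-decomposition altogether and works pointwise in space: writing $Z_y=\int_0^t \1(g(s)-\zeta(s)\in B(y,r))\,ds$, one has $\pr{\tau_{B(y,r)}\le t}=\pr{Z_y>0}=\E{Z_y}/\Econd{Z_y}{Z_y>0}$, the numerator integrates over $y$ to exactly $\pi r^2 t$, and the strong Markov property plus the uniform heat-kernel bound $p_s(x,z)\le 1/(2\pi s)$ give $\Econd{Z_y}{Z_y>0}\le 1+r^2\log t/2$ \emph{uniformly in $g$ and $y$}. Integrating $\pr{Z_y>0}$ over $y$ then yields $\E{\vol{W_g(t)}}\ge 2\pi r^2 t/(2+r^2\log t)$. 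If you want to rescue a decomposition-style argument you would need this occupation-time idea anyway; I recommend adopting it directly.
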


\begin{proof}
We may write
\[
\E{\text{vol}(W_g(t))} = \int_{\rset^2} \pr{y \in \cup_{s\leq t}B(g(s)-\zeta(s),r)}dy = \int_{\rset^2} \pr{\tau_{B(y,r)}\leq t}dy,
\]
where $\tau_A$ is the first hitting time of the set $A$ by $g-\zeta$.
Define
\[
Z_y = \int_{0}^{t} \1(g(s)-\zeta(s) \in B(y,r)) \,ds,
\]
i.e., the time that the process $g-\zeta$ spends in the ball $B(y,r)$ before time $t$.
It is clear by the continuity of $g-\zeta$ that $\{Z_y > 0 \} = \{\tau_{B(y,r)}\leq t \}$.
Clearly $\pr{Z_y > 0} = \frac{\E{Z_y}}{\Econd{Z_y}{Z_y>0}}$ and for the first moment we have
\begin{align*}
&\E{Z_y} = \int_{0}^{t} \pr{g(s)-\zeta(s) \in B(y,r)} \,ds \\
&= \int_{0}^{t} \int_{B(y,r)} \frac{1}{2\pi s} e^{-\frac{\|z-g(s)\|_2^2}{2s}}\,dz \,ds = \int_{0}^{t} \int_{B(0,r)} \frac{1}{2\pi s} e^{-\frac{\|z+y-g(s)\|_2^2}{2s}} \,dz \,ds.
\end{align*}
For the conditional expectation $\Econd{Z_y}{Z_y>0}$, if we write $\tau$ for the first time
before time $t$ that $g-\zeta$ hits the boundary of the ball $B(y,r)$, denoted by
$\partial B(y,r)$, then we get
\begin{align*}
&\Econd{Z_y}{Z_y>0}= \E{\int_{0}^{t-\tau} \1(g(s+\tau)-\zeta(s+\tau)\in B(y,r))\,ds}
\\
&\leq 1+\E{\int_{1}^{(t-\tau) \vee 1} \1(g(s+\tau)-\zeta(s+\tau)\in B(y,r)) \,ds}
\\ &\leq 1 + \max_{x \in \partial B(y,r)} \int_{1}^{t}  \int_{B(y,r)} \E{\frac{1}{2\pi s} e^{-\frac{\|z-g(s+\tau)-x + g(\tau)\|_2^2}{2s}}} \,dz \,ds
\\ &\leq 1 + \int_{1}^{t}  \int_{B(y,r)}\frac{1}{2\pi s}\,dz\,ds
\leq 1 + r^2 \frac{\log{t}}{2}.
\end{align*}

So, putting everything together we obtain that
\begin{align*}
\E{\text{vol}(W_g(t))} = \int_{\rset^2} \frac{\E{Z_y}}{\Econd{Z_y}{Z_y>0}}\,dy \geq \frac{\int_{0}^{t} \int_{B(0,r)}\left(\int_{\rset^2}\frac{1}{2\pi s}e^{-\frac{\|z+y-g(s)\|_2^2}{2s}} \,dy\right) \,dz \,ds }{1 + r^2\frac{\log{t}}{2}} = \frac{2 \pi t r^2}{2 + r^2\log{t}}
\end{align*}
and hence as $t\to\infty$
%\Note{Removed liminf from display eq.}
\[
\E{\vol{W_g(t)}}\geq \frac{2 \pi t}{\log{t}}(1-o(1)).
\]
\end{proof}

We are now ready to prove Theorem~\ref{thm:detection}.
\begin{proof}[\textbf{Proof of Theorem~\ref{thm:detection}}]
From Remark~\ref{rem:moving} we have
\[
\pr{\tdet > t} = \E{\exp(-\lambda \Econd{\text{vol}(W_g(t))}{(g(s))_{s \leq t}}) },
\]
where $g$ is  independent of $\zeta$. By Lemma~\ref{lem:voldrift}, we have the upper bound
\[
\pr{\tdet > t} \leq \exp{\left(-2\pi \lambda \frac{t}{\log{t}} (1-o(1))\right)}, \text{ as } t \to \infty.
\]
So it remains to show the lower bound on this probability for the case when $g$ is a standard Brownian motion
independent of the motions of the nodes of $\Pi_0$.
Letting $R=\log{t}$, it is clear that
\begin{align}
\pr{\tdet > t} \geq \pr{u \text{ stays in } \B(0,R) \text{ for all } s \leq t, T_{\B(0,R)}>t},
\label{eq:domain}
\end{align}
where $T_{\B(0,R)}$ is the detection time of the ball $\B(0,R)$, i.e.,
\[
 T_{\B(0,R)} = \inf \{s>0: \exists i \text{ s.t. } B(X_i+\zeta_i(s),r)\cap B(0,R)\neq \emptyset\}.
\]
Since the motions of $u$ and the  nodes of $\Pi_0$ are independent, we get that
\begin{equation}
\pr{u \text{ stays in } \B(0,R) \text{ for all } s \leq t, T_{\B(0,R)}>t}
= \pr{u \text{ stays in } \B(0,R) \text{ for all } s \leq t} \pr{ T_{\B(0,R)}>t}.
\label{eq:lb1step}
\end{equation}
From Remark~\ref{rem:comp} with $K=B(0,R)$ we get
%\Note{Replaced the $\log t$ by $R$. Easier to see how to adapt to other dimensions.}
\[
\pr{T_{B(0,R)}>t} = \exp(-\lambda \E{\vol{\cup_{s \leq t}B(0,R + r) - \zeta(s)}})
\]
and writing $\cup_{s \leq t}(B(0,R + r) - \zeta(s)) = R \cup_{s \leq t}B\left(-\frac{\zeta(s)}{R},1 + \frac{r}{R}\right) $
and for $R$ large enough we get that,  for all~$s$,
\[
B\left(-\frac{\zeta(s)}{R},1\right)\subset B\left(-\frac{\zeta(s)}{R},1 + \frac{r}{R}\right) \subset B\left(-\frac{\zeta(s)}{R},2\right).
\]
For any $x>0$ we have by Brownian scaling that
\begin{equation*}
\E{\vol{\bigcup_{s \leq t}B\left(-\frac{\zeta(s)}{R},x\right)}}
= \E{\vol{\bigcup_{s' \leq \frac{t}{R^2}}B\left(\tilde{\zeta}(s'),x\right)}},
\end{equation*}
where $\tilde{\zeta}$ is a standard Brownian motion.
So finally, using the asymptotic expression for the expected volume of the Wiener sausage in two dimensions \cite{BMS,Spitzer},
i.e., $\E{\vol{\cup_{s \leq t}B(\zeta(s),x)}} \sim 2 \pi\frac{t}{\log t}$ as $t \to \infty$, for any $x$ independent of $t$, we get that
\begin{equation*}
\E{\vol{\cup_{s \leq t}B(0,R + r) - \zeta(s)}} \sim R^2 2 \pi \frac{t/R^2}{\log t - \log R^2} \sim 2 \pi \frac{t}{\log t}.
\end{equation*}
Hence,
\[
\pr{T_{\B(0,R)}>t} = \exp\left(-2\pi \lambda \frac{t}{\log{t}} (1+o(1))\right), \text{ as } t \to \infty.
\]
Thus we only need to lower bound the probability that $u$ stays in the ball $\B(0,R)$ for all times $s \leq t$.

For any $t \geq R^2$ and any dimension $d\geq 1$,
%\Note{Added any dimension $d\geq 1$}
we have by \cite{CTaylor} that
\begin{equation}
   \pr{u \text{ stays in } B(0,R) \text{ for all } s \leq t} \geq \exp(-ct/R^2),
   \label{eq:taylor}
\end{equation}
for a positive constant $c$ and hence, since $R = \log t$, we get that
\[
\pr{\tdet > t} \geq  \exp\left(-2\pi \lambda \frac{t}{\log{t}}(1+o(1))\right)\exp\left(-c \frac{t}{(\log{t})^2}\right)
= \exp\left(-2\pi \lambda \frac{t}{\log{t}}(1+o(1))\right).
\]
\end{proof}

\topic{General dimensions and the Wiener sausage}
%\Note{New, please check!}
For $d=1$, the volume of $W_g(t)$ can be computed from the maximum and minimum values of $g-\zeta$ via the formula
$$
   \vol{W_g(t)} = 2r + \max_{s \leq t} (g(s)-\zeta(s)) - \min_{s \leq t} (g(s)-\zeta(s)).
$$
Let $t^\star$ and $t_\star$ be the random times in the interval $[0,t]$ at which $-\zeta$ achieves its maximum and minimum values respectively.
Then we have
\begin{align}
   \E{\vol{W_g(t)}}
   &\geq  2r + \E{(g(t^\star)-\zeta(t^\star)) - (g(t_\star)-\zeta(t_\star))}  \nonumber\\
   &= 2r + \E{-\zeta(t^\star)+\zeta(t_\star)} = \E{\vol{W_0(t)}}, \label{eq:gnog1}
\end{align}
where $\E{g(t^\star)}=\E{g(t_\star)}$ holds since $t^\star$ and $t_\star$ have the same distribution.
Thus, for $d=1$ the inequality $\E{\vol{W_g(t)}} \geq \E{\vol{W_0(t)}}$ holds for all fixed $t$;
for $d=2$ Lemma~\ref{lem:voldrift} gives this inequality only asymptotically as $t\to\infty$.

For dimensions $d\geq 3$, the proof of Lemma~\ref{lem:voldrift} can be used to obtain the following weaker result: there exists a positive constant
$c$ such that
\begin{equation}
   \E{\vol{W_g(t)}} \geq c \E{\vol{W_0(t)}}.\label{eq:gnog3}
\end{equation}

The expected volume of the Wiener sausage with $g\equiv 0$ is known to satisfy~\cite{Spitzer,BMS}
\begin{equation}
  V_0(t)= \E{\vol{W_0(t)}} = \left\{\begin{array}{rl}
   \sqrt{\frac{8t}{\pi}}+2r & \text{for $d=1$}\\
   \frac{2\pi t}{\log t}(1+o(1))  & \text{for $d=2$}\\
   c(d)r^{d-2}t(1+o(1))  & \text{for $d\geq 3$,}
   \end{array}\right.
   \label{eq:volwiener}
\end{equation}
where $c(d)=\frac{\Gamma\left(\frac{d}{2}-1\right)}{2\pi^{d/2}}$. (For $d=1$ the quantity above follows from well-known results for
Brownian motion~\cite{BM}; namely $\E{\max_{s\leq t}\zeta(s)} = - \E{\min_{s\leq t}\zeta(s)} = \sqrt{\frac{2t}{\pi}}$.)
Hence plugging (\ref{eq:gnog1}--\ref{eq:volwiener}) into Lemma~\ref{lem:detection} gives an
upper bound for $\pr{\tdet > t}$ when $d=1$ and $d\geq 3$.
Regarding the lower bound, when $g$ is an independent Brownian motion, the same strategy as in the proof of Theorem~\ref{thm:detection}
works for $d=1$ and $d\geq 3$ provided we set $R$ properly.
For $d=1$ it suffices to take $R=t^{1/3}$;
%\Note{For $d=1$ any $R=t^{1/4+\epsilon}$ that satisfies $R = o(\sqrt{t})$ works. I opted to give a specific value for $R$ here, like
%we did for $d=2$.}
for $d\geq 3$ we can set $R=r$. Then we obtain a positive constant $c_1$ such that, as $t\to\infty$,
$$
   \pr{T_{\B(0,R)}>t} \geq \left\{\begin{array}{rl}
       \exp\left(-\sqrt{\frac{8t}{\pi}}(1+o(1))\right) & \text{for $d=1$}\\
       \exp\left(-c_1 r^{d-2} t (1+o(1))\right) & \text{for $d\geq 3$.}
    \end{array}\right.
$$
This together with \eqref{eq:lb1step} and \eqref{eq:taylor} give us the following theorem, which holds in all dimensions $d\geq 1$.

\begin{theorem}\label{thm:detection2}
   Let $g$ be a continuous process in $\rset^d$, for $d \geq 1$. %Define $V_g(t)$ so that
%   $$
%      \pr{\tdet > t} = \exp(-\lambda V_g(t)).
%   $$
   If $g$ is a deterministic continuous function, then we have
   $$
      -\frac{1}{\lambda}\log \pr{\tdet > t} = \E{\vol{W_g(t)}}.
   $$
   If $g$ is random but independent of the motion of the nodes of $\Pi_0$, we obtain a positive constant $\beta_1$ such that as $t\to\infty$
   $$
      -\frac{1}{\lambda}\log \pr{\tdet > t} \geq \left\{\begin{array}{rl}
         V_0(t) & \text{for $d=1$}\\
         (1-o(1))V_0(t) & \text{for $d=2$} \\
         (1-o(1))\beta_1V_0(t) & \text{for $d\geq 3$},
      \end{array}\right.
   $$
    where $V_0(t)$ is defined in \eqref{eq:volwiener}.

   If $g$ is a standard Brownian motion, then we obtain a positive constant $\beta_2$ such that as $t\to\infty$
   $$
      -\frac{1}{\lambda}\log \pr{\tdet > t} \leq \left\{\begin{array}{rl}
         (1+o(1))V_0(t) & \text{for $d=1,2$}\\
         (1+o(1))\beta_2V_0(t) & \text{for $d\geq 3$}.
      \end{array}\right.
   $$
\end{theorem}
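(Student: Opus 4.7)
The plan is to assemble the three parts of Theorem~\ref{thm:detection2} from ingredients already developed in the preceding pages, with each dimension handled by the appropriate one of the three bounds \eqref{eq:gnog1}, Lemma~\ref{lem:voldrift} and \eqref{eq:gnog3} on $\E{\vol{W_g(t)}}$, combined with the asymptotics~\eqref{eq:volwiener} for $V_0(t)$. Part~1 (deterministic $g$) is nothing but Lemma~\ref{lem:detection} rewritten in logarithmic form, so there is nothing to do.

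For Part~2 (the lower bound on $-\tfrac{1}{\lambda}\log\pr{\tdet>t}$ when $g$ is random and independent of~$\Pi_0$), I would condition on a realization of $g$ and apply Remark~\ref{rem:moving}:
\[
\pr{\tdet>t}=\E{\exp\!\left(-\lambda\,\Econd{\vol{W_g(t)}}{(g(s))_{s\leq t}}\right)}.
\]
Since the bounds \eqref{eq:gnog1}, Lemma~\ref{lem:voldrift} and \eqref{eq:gnog3} are established for an \emph{arbitrary} deterministic continuous $g$, they apply to almost every sample path, and a constant uniform in $g$ can be extracted from the proof of Lemma~\ref{lem:voldrift} (which in fact gives the explicit lower bound $2\pi t r^{2}/(2+r^{2}\log t)$ independent of $g$). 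Plugging each dimension-appropriate bound inside the exponential and taking expectation yields the three claimed lower bounds with $\beta_1$ coming from the constant in~\eqref{eq:gnog3}.

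For Part~3 (the upper bound on $-\tfrac{1}{\lambda}\log\pr{\tdet>t}$ when $g$ is a standard Brownian motion), I would reuse the ``confine $u$ to a ball'' trick from the proof of Theorem~\ref{thm:detection}: by independence of $g$ and $\Pi_0$,
\[
\pr{\tdet>t}\geq \pr{u\in B(0,R)\text{ for all }s\leq t}\cdot \pr{T_{B(0,R)}>t},
\]
as in \eqref{eq:lb1step}. The first factor is bounded below by $\exp(-ct/R^{2})$ via \eqref{eq:taylor}, and the second by $\exp(-\lambda\,\E{\vol{\cup_{s\leq t}(B(0,R+r)-\zeta(s))}})$ via Remark~\ref{rem:comp}. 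The parameter $R$ is then chosen so that the confinement cost $t/R^{2}$ is negligible compared to $V_0(t)$: for $d=1$ take $R=t^{1/3}$, so that $t/R^{2}=t^{1/3}=o(\sqrt t)=o(V_0(t))$, and Brownian scaling together with $V_0(t)\sim\sqrt{8t/\pi}$ shows that the Wiener-sausage volume is $(1+o(1))V_0(t)$; for $d\geq3$ take $R=r$, a fixed constant, so that both the confinement cost and the sausage volume are linear in $t$, producing the constant factor $\beta_2$.

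The only mildly delicate point is the $d\geq 3$ regime of Part~3: because $V_0(t)$ grows linearly in $t$, there is no slack to let $R\to\infty$ with $t$ without the $t/R^{2}$ confinement term contaminating the leading order. This forces us to accept a multiplicative constant $\beta_2>1$ rather than a $(1+o(1))$ bound and explains the weaker form of the statement in that case. Everything else is a routine substitution of \eqref{eq:volwiener} into the estimates already derived.
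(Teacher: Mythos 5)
Your proposal is correct and follows essentially the same route as the paper: Part 1 is Lemma~\ref{lem:detection} restated, Part 2 plugs the pathwise-uniform volume bounds \eqref{eq:gnog1}, Lemma~\ref{lem:voldrift} and \eqref{eq:gnog3} into Remark~\ref{rem:moving}, and Part 3 reuses the confinement argument of Theorem~\ref{thm:detection} with $R=t^{1/3}$ for $d=1$ and $R=r$ for $d\geq 3$, exactly as the paper does. Your observation that the uniformity in $g$ needed for Part 2 is supplied by the explicit bound $2\pi tr^2/(2+r^2\log t)$ in the proof of Lemma~\ref{lem:voldrift} is a point the paper leaves implicit.
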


We conclude this section with the following conjecture.
\begin{conjecture}\label{conj:wiener}
   For $d\geq 2$, any fixed $t$, and any function $g$, we have that
   $$
   \E{\vol{W_g(t)}} \geq \E{\vol{W_0(t)}}.
   $$
\end{conjecture}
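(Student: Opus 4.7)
The plan is to prove Conjecture~\ref{conj:wiener} via a time-discretization reduction followed by a rearrangement inequality. For any refining sequence of partitions $0 = t_0 < t_1 < \cdots < t_n = t$ with mesh tending to zero, the discretized sausage $\cup_{k=0}^n B(g(t_k)-\zeta(t_k),r)$ grows in $n$ to a limit which equals $W_g(t)$ by continuity of $g-\zeta$; by monotone convergence, its expected volume converges to $\E{\vol{W_g(t)}}$. It therefore suffices to show that for any $n$, any jointly Gaussian centered vectors $Z_0,\ldots,Z_n$ in $\R^d$ (arising as $Z_k = -\zeta(t_k)$), and any deterministic vectors $a_0,\ldots,a_n \in \R^d$ (arising as $a_k = g(t_k)$),
\[
\E{\vol{\cup_{k=0}^n B(Z_k+a_k,r)}} \;\geq\; \E{\vol{\cup_{k=0}^n B(Z_k,r)}}.
\]

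The base case $n=0$ is trivial. For $n=1$ (two balls), inclusion--exclusion gives
\[
\vol{B(c_0,r)\cup B(c_1,r)} = 2\,\vol{B(0,r)} - \psi(\|c_0-c_1\|),
\]
where $\psi:[0,\infty)\to[0,\infty)$ is the non-increasing profile of the intersection volume of two $r$-balls as a function of the distance between centers. The conjecture for $n=1$ thus reduces to $\E{\psi(\|W+b\|)} \leq \E{\psi(\|W\|)}$, with $W=Z_1-Z_0$ centered Gaussian and $b=a_1-a_0$ deterministic. Since $x\mapsto\psi(\|x\|)$ has convex centrally symmetric super-level sets (closed balls about the origin) and $W$ has a centrally symmetric log-concave density, this is a direct application of Anderson's inequality.

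The main obstacle is the case $n\geq 2$. A naive inclusion--exclusion expansion isolates pairwise intersection terms, which are controlled by Anderson as above, together with triple and higher intersection terms that have the ``wrong'' sign. Equivalently, the function $V(z_0,\ldots,z_n) := \vol{\cup_k B(z_k,r)}$ on $\R^{(n+1)d}$ is centrally symmetric but its super-level sets fail to be convex already for $n=2$, $d=1$: the configuration set in which some pair of balls overlap is a union of slabs and is not convex. This blocks a direct application of Anderson's theorem or Royen's Gaussian correlation inequality to the full vector $(Z_0,\ldots,Z_n)$. Two possible routes past this obstacle are: (i) an induction on $n$ in which one conditions on $Z_0,\ldots,Z_{n-1}$ and analyses the marginal contribution $\vol{B(Z_n+a_n,r)\setminus\cup_{k<n}B(Z_k+a_k,r)}$, using the Markov property of $\zeta$ to preserve a tractable conditional Gaussian structure; and (ii) a continuous-time Cameron--Martin--Girsanov argument, which (for $g$ in the Cameron--Martin space) writes $\E{\vol{W_g(t)}} = \E{M\cdot\vol{W_0(t)}}$ with $M = \exp(-\int_0^t g'(s)\,d\zeta(s) - \tfrac12\int_0^t |g'(s)|^2\,ds)$. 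The $\zeta\leftrightarrow-\zeta$ symmetry annihilates the linear term in the expansion of $M$, reducing the conjecture at second order to the plausible but non-trivial positive correlation between $\vol{W_0(t)}$ and the square of every linear functional of the driving Brownian path; obtaining such a correlation inequality at all orders is where the real difficulty of the conjecture appears to lie.
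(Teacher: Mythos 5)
The statement you have been asked to prove is labelled a \emph{conjecture} in the paper, and the paper itself offers no proof of it: the authors only establish the inequality for $d=1$ at every fixed $t$ (via the max/min representation of the one-dimensional sausage, inequality~(\ref{eq:gnog1})) and for $d=2$ asymptotically as $t\to\infty$ (Lemma~\ref{lem:voldrift}); the fixed-$t$ case for $d\geq 2$ is explicitly left open. Your proposal, by your own admission, does not close it either. What you do establish is sound as far as it goes: the monotone-convergence reduction to finitely many sample times is valid (at least for continuous $g$, which is the only case in which the sausage is even well defined pathwise), and the two-ball case does follow from Anderson's inequality, since $\vol{B(c_0,r)\cup B(c_1,r)}=2\vol{B(0,r)}-\psi(\|c_0-c_1\|)$ with $\psi$ non-increasing, so the super-level sets of $x\mapsto\psi(\|x\|)$ are centered balls and a deterministic shift of the centered Gaussian $W=Z_1-Z_0$ can only decrease $\E{\psi(\|W+b\|)}$. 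You also correctly diagnose why the argument stops there: for three or more balls the functional $\vol{\cup_k B(z_k,r)}$ is centrally symmetric in the configuration but its super-level sets are not convex, so neither Anderson's inequality nor the Gaussian correlation inequality applies to the full vector $(Z_0,\dots,Z_n)$, and the higher-order terms of inclusion--exclusion carry the wrong sign.

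So the verdict is that there is a genuine gap --- indeed the gap is the entire content of the conjecture for $d\geq 2$ --- and neither of your proposed escape routes is carried out. Route (i) hits the same obstruction one level down: conditionally on $Z_0,\dots,Z_{n-1}$, the marginal contribution $\vol{B(Z_n+a_n,r)\setminus\cup_{k<n}B(Z_k+a_k,r)}$ is the Gaussian measure (in the shift variable) of the complement of a non-convex union, so Anderson again does not apply. Route (ii) requires $g$ in the Cameron--Martin space, whereas the conjecture is stated for arbitrary $g$, and as you note it already reduces at second order to an unproved positive-correlation inequality between the sausage volume and squared linear functionals of the path. For context, the discrete analogue for random walks is proved in \cite{MOBC} (see also \cite[Corollary~2.1]{DGRS}) by the so-called Pascal principle, which compares non-detection probabilities directly rather than expanding volumes; adapting that comparison argument to Brownian motion is the natural line of attack, but it is not what you have done. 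In short: the reduction and the $n=1$ case are correct partial progress, but the statement remains unproved, consistent with its status in the paper as an open conjecture.
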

Note that Theorem~\ref{thm:detection2} establishes the conjecture for $d=1$,
and establishes it asymptotically as $t\to\infty$ for $d=2$ provided $g$ is continuous.
A related statement for random walks was proved in \cite{MOBC};
see also \cite[Corollary~2.1]{DGRS}.

%############################################################################################
%############################################################################################
%############################################################################################
\section{Coverage time}\label{sec:coverage}
In this section we will prove a more general version of Theorem~\ref{thm:coverage}. Let $A$ be a subset of $\R^d$. For $R \in \R_+$ we
define the set $RA = \{Ra: a \in A\}$. We recall the definition of Minkowski dimension, which can be found, e.g., in~\cite{Mattila95}.
\begin{definition}\label{def:mink}
Let $A$ be a non-empty bounded subset of $\R^d$. For $\epsilon>0$ let $M(A,\epsilon)$ be the smallest number of balls of radius $\epsilon$ needed to cover $A$:
\[
M(A,\epsilon) = \min \left\{k: A \subset \bigcup_{i=1}^{k}B(x_i,\epsilon) \text{ for some } x_i \in \R^d\right\}.
\]
The \emph{Minkowski dimension} of $A$ is defined as
\[
\dim_M(A) = \lim_{\epsilon \to 0} \frac{\log M(A,\epsilon)}{\log\epsilon^{-1}},
\]
whenever this limit exists.
\end{definition}
We now proceed to state the more general version of Theorem~\ref{thm:coverage}.
%\Note{New statement of theorem with $d=1$ and Minkowski inside $\Psi$ (now depending also on $A$)}
\begin{theorem}\label{thm:coverage2}
Let $A$ be a bounded subset of $\rset^d$ of Minkowski dimension $\alpha$.
We have that as $R \to \infty$
\[
E{\taucov(RA)} \sim \left\{\begin{array}{rl}
   \frac{\alpha^2 \pi}{8\lambda^2}(\log R)^2& \text{for $d=1$}\\
   \frac{\alpha}{2 \pi \lambda }\log{R} \log\log{R}  & \text{for $d=2$}\\
   \frac{\alpha \log R}{\lambda c(d) r^{d-2}}  & \text{for $d\geq 3$}
   \end{array}\right. \quad \text{ and }\quad  \frac{\taucov(RA)}{\E{\taucov(RA)}} \to 1 \text{ in probability},
\]
where $c(d) = \frac{\Gamma\left(\frac{d}{2}-1 \right)}{2 \pi^{\frac{d}{2}}}$ and $\Gamma$~stands for the Gamma function.
\end{theorem}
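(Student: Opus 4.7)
Define $t_R^\star$ by $\lambda V_0(t_R^\star) = \alpha \log R$, where $V_0(t) = \E{\vol{W_0(t)}}$ is the expected Wiener sausage volume recorded in \eqref{eq:volwiener}. Substituting each of the three asymptotic expressions for $V_0$ into this equation recovers precisely the three formulas claimed for $\E{\taucov(RA)}$. I would then prove matching upper and lower bounds on $\taucov(RA)/t_R^\star$ in probability; an exponential tail bound produced by the upper-bound argument upgrades this to $\E{\taucov(RA)}\sim t_R^\star$ and yields the concentration statement.

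\textbf{Upper bound.} Fix $\epsilon = \epsilon(R)$ with $\log\epsilon^{-1} = o(\log R)$; any constant $\epsilon$ works in $d=1,2$ since $V_0$ is asymptotically independent of the ball radius there, while in $d\geq 3$ I take $\epsilon\to 0$ so that $(r+\epsilon)^{d-2}\to r^{d-2}$. By the definition of Minkowski dimension, $RA$ admits a cover by $N_c = M(A,\epsilon/R) = (R/\epsilon)^{\alpha+o(1)}$ balls $B(x_i,\epsilon)$, and $RA$ is covered by time $t$ once each such ball is detected. Remark~\ref{rem:comp} with $K = B(x_i,\epsilon)$ gives detection probability $\exp(-\lambda V_0^{(r+\epsilon)}(t))$, where $V_0^{(r+\epsilon)}$ is the expected Wiener sausage volume at radius $r+\epsilon$; the choice of $\epsilon$ ensures $V_0^{(r+\epsilon)}(t) = V_0(t)(1+o(1))$ throughout the relevant range of $t$. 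A union bound then yields
\[
\pr{\taucov(RA) > t} \leq (R/\epsilon)^{\alpha+o(1)}\exp\bigl(-\lambda V_0^{(r+\epsilon)}(t)\bigr),
\]
which is $R^{-\alpha\delta+o(1)}$ at $t = (1+\delta)t_R^\star$ and, applied at $t = Ct_R^\star$ for $C$ large, furnishes the exponential tail needed to bound $\E{\taucov(RA)}$ from above.

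\textbf{Lower bound.} Using Minkowski dimension a second time, I extract points $x_1,\dots,x_N\in RA$ pairwise at distance at least $\ell=\ell(R)$ with $N\geq (R/\ell)^{\alpha-o(1)}$, choosing $\ell$ polylogarithmic in $R$ (so $\log\ell = o(\log R)$) but larger than the typical spatial extent of a Brownian motion up to time $t_R^\star$. Set $Y_i=\ind{x_i\text{ is undetected at time }t}$ and $Y=\sum_i Y_i$. Lemma~\ref{lem:detection} yields $\E{Y_i}=\exp(-\lambda V_0(t))$, and applied to pairs of targets,
\[
\E{Y_iY_j} = \E{Y_i}\E{Y_j}\exp\bigl(\lambda\,\E{\vol{W^{x_i}(t)\cap W^{x_j}(t)}}\bigr),
\]
where $W^{x}(t) = x + W_0(t)$. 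The separation $\ell$ is chosen large enough that the expected overlap is $o(1)$ uniformly in pairs, giving $\E{Y_iY_j}=(1+o(1))\E{Y_i}\E{Y_j}$ and hence $\var{Y}\leq\E{Y}+o(1)\E{Y}^2$. For $t=(1-\delta)t_R^\star$ one has $\E{Y}\to\infty$ (polynomially in $R$), so Chebyshev's inequality gives $\pr{Y=0}\to 0$, i.e., $\taucov(RA)>t$ with high probability.

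\textbf{Main obstacle.} The principal technical point is the decorrelation estimate, i.e., a uniform bound on $\E{\vol{W^{x_i}(t)\cap W^{x_j}(t)}}$ under the chosen separation $\ell$. This is most delicate in $d=2$, where the sausage volume is only logarithmically thin but its spatial extent is of order $\sqrt{t\log t}$, forcing $\ell$ to be balanced simultaneously against the covering count $(R/\ell)^\alpha$ and the asymptotic of $V_0$. Once this is in hand, $\E{\taucov(RA)}\sim t_R^\star$ follows by integrating the tail bound above, and the concentration statement is immediate from the two matching in-probability bounds.
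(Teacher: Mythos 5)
Your overall strategy coincides with the paper's: calibrate $t_R^\star$ by $\lambda V_0(t_R^\star)=\alpha\log R$, prove the upper bound by covering $RA$ with small balls and a first-moment/union bound, and the lower bound by packing well-separated points and applying the second moment method, with the decorrelation of overlapping Wiener sausages as the key technical estimate. Your lower bound takes a mildly different route from the paper's: you enforce polylogarithmic separation $\ell$ from the outset so that \emph{all} pairs decorrelate, whereas the paper packs at constant separation (radius-$1$ disjoint balls) and then splits pairs into ``near'' (distance $\le(\log R)^2$, handled by the trivial bound $\vol{W_{x_i}\cap W_{x_j}}\le\vol{W_{x_i}}$ together with a count of near pairs) and ``far'' (handled by a dyadic-shell estimate on the overlap). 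Both work, since $(R/\ell)^{\alpha-o(1)}=R^{\alpha-o(1)}$ for polylogarithmic $\ell$; your version trades a slightly smaller packing for a cleaner variance bound. The concentration and the passage from in-probability bounds to $\E{\taucov(RA)}\sim t_R^\star$ are handled as in the paper.

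There is, however, a genuine error in your upper bound. You cover $RA$ by balls $B(x_i,\epsilon)$ and assert that ``$RA$ is covered by time $t$ once each such ball is \emph{detected},'' invoking Remark~\ref{rem:comp} with $K=B(x_i,\epsilon)$, which yields the sausage radius $r+\epsilon$. Detection of the ball $B(x_i,\epsilon)$ means only that \emph{some} point of it has come within distance $r$ of a node; coverage requires that \emph{every} point of it has. These are very different events, and the implication you need goes the wrong way: detecting all the balls of the cover does not cover $RA$, so the bound $\pr{\taucov(RA)>t}\le N_c\exp(-\lambda V_0^{(r+\epsilon)}(t))$ does not follow from your reasoning (indeed $V_0^{(r+\epsilon)}$ makes the right-hand side \emph{smaller}, in the wrong direction for an honest upper bound). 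The correct reduction, which is what the paper uses, is that $B(x_i,\epsilon)$ is fully covered as soon as some node enters $B(x_i,r-\epsilon)$, i.e.
\[
\pr{B(x_i,\epsilon)\text{ not covered by time }t}\;\le\;\exp\bigl(-\lambda\,\E{\vol{W_{0,r-\epsilon}(t)}}\bigr),
\]
so the relevant sausage radius is $r-\epsilon$, not $r+\epsilon$. The fix is one line and does not disturb the rest of your argument: for $d=1,2$ the radius is asymptotically irrelevant, and for $d\ge3$ your choice $\epsilon\to0$ gives $(r-\epsilon)^{d-2}\to r^{d-2}$ exactly as needed. With this correction the proposal matches the paper's proof in substance.
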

%\begin{theorem}\label{thm:coverage2}
%Let $A$ be a bounded subset of $\rset^d$ of Minkowski dimension $\alpha$. Define $\Psi_2(R) = \frac{1}{2\pi}\log{R} \log\log{R}$ and $\Psi_d(R) = \frac{\log R}{c(d) r^{d-2}}$, for $d \geq 3$, where $c(d) = \frac{\Gamma\left(\frac{d}{2}-1 \right)}{2 \pi^{\frac{d}{2}}}$ and $\Gamma$~stands for the Gamma function.
%We then have that as $R \to \infty$
%\[
%\E{\taucov(RA)} \sim \frac{\alpha}{\lambda}\Psi_{d}(R) \quad \text{   and   }\quad \frac{\taucov(RA)}{\E{\taucov(RA)}} \to 1 \text{ in probability},
%\]
%where $f \sim g$ as $x \to \infty$ means that $\frac{f(x)}{g(x)} \to 1$ as $x \to \infty$.
%\end{theorem}

\begin{proof}
In the proof we will drop the dependence on $RA$ from $\taucov(RA)$ and
$\E{\taucov(RA)}$ to simplify the notation.
%\Note{Added sentence that proof is for $d=2$ only.}
We will carry the proof for the case $d=2$ only and discuss how to adapt the proof for other dimensions at the end.

Let $M(A,\epsilon) = \min \{k \geq 1: \exists \quad B_1, \ldots, B_k \text{ balls of radius } \epsilon \text{ covering } A  \}$;
then it is easy to see that $M(RA,\epsilon)=M(A, \frac{\epsilon}{R})$.
By the assumption that $A$ has Minkowski dimension $\alpha$, for any $\delta>0$ we can find $\epsilon_0$ small enough such that
\begin{align} \label{mink}
\epsilon^{-\alpha + \delta} \leq M(A,\epsilon) \leq \epsilon^{-\alpha-\delta}, \text{ for any } \epsilon< \epsilon_0.
\end{align}
We will first show that
\[
\limsup_{R \to \infty} \frac{\tcov}{\frac{\alpha}{2\pi \lambda}\log R \log \log R}\leq 1.
\]
To do so, we are going to cover the set $RA$ by $M=M(RA,\epsilon)$ balls of radius $0<\epsilon<r$.

From \eqref{mink} we get that
\begin{align}\label{mbounds}
\left(\frac{\epsilon}{R} \right)^{-\alpha+ \delta} \leq M \leq \left(\frac{\epsilon}{R}\right)^{-\alpha-\delta}\quad \text{ for $R$ sufficiently large}.
\end{align}
Let $Z_t$ be the number of balls not covered by the nodes at time $t$.
It is clear that $\{\taucov > t \} \subset \{ Z_t \geq 1 \}$. For the first moment of $Z_t$ we have
\[
\E{Z_t} \leq M \pr{\text{a given ball $\B(x,\epsilon)$ is not covered by time $t$}}.
\]
The probability that a ball $\B(x,\epsilon)$ is covered by time $t$ is lower bounded by the probability that a node of the Poisson point process $\Pi_0$ has entered the ball $\B(x,r-\epsilon)$ before time $t$. Hence,
$\pr{B(x,\epsilon) \text{ is not covered by time $t$}}$
is at most the probability that $x$ has not been detected by time $t$ by a mobile geometric graph with radius $r-\epsilon$.
From Lemma~\ref{lem:detection} we obtain
\begin{equation*}
\pr{B(x,\epsilon)\text{ is not covered by time $t$}} \leq e^{-\lambda \E{\text{vol}(W_{0,r-\epsilon}(t))}},
\end{equation*}
where $W_{z,\rho}(t) = \cup_{s \leq t} \B(z+\zeta(s), \rho)$.

%It is well known \cite{BMS,Spitzer} that the expected volume of the Wiener sausage satisfies:
%\begin{itemize}
%\item For $d=2$,
%\[
%\E{\text{vol}(W_{0,x}(t))} \sim 2 \pi\frac{t}{\log{t}}, \text{ as $t \to \infty$}.
%\]
%\item For $d \geq 3$,
%\[
%\E{\text{vol}(W_{0,x}(t))} \sim x^{d-2} c(d) t, \text{ as $t \to \infty$},
%\]
%where $c(d) = \frac{\Gamma(\frac{d}{2} - 1)}{2 \pi^{\frac{d}{2}}}$ is the so-called Newtonian capacity of the unit ball in $d$ dimensions.
%\end{itemize}

We are now prove the \textbf{upper bound}. % for the case $\mathbf{d=2}$.
%\Note{Removed ``for the case $d=2$'' and deleted the expression for the volume of the Wiener sausage. Instead use reference to formula stated in detection.}
Let $\delta'>0$ be small. For $t$ large enough we have that (see \eqref{eq:volwiener})
\begin{align}\label{wienvol}
(1-\delta')2 \pi\frac{t}{\log{t}} \leq \E{\text{vol}(W_{0,r-\epsilon}(t))} \leq (1+\delta')2 \pi\frac{t}{\log{t}}
\end{align}
and hence,
\begin{align}\label{mesi}
\E{Z_t} \leq M e^{-2 \pi \lambda(1-\delta')\frac{t}{\log{t}}}.
\end{align}
By Markov's inequality we have that $\pr{\taucov > t} \leq \E{Z_t}$. Also,
\begin{align}\label{integral}
\E{\taucov} = \int_{0}^{\infty} \pr{\taucov>t}\,dt \leq t_*(R) + \int_{t_*(R)}^{\infty} M e^{-2 \pi \lambda(1-\delta')\frac{t}{\log{t}}} \,dt,
\end{align}
where $t_*(R)$ satisfies
\begin{align}\label{defin}
M \exp\left(-2\pi \lambda (1-\delta')\frac{t_*(R)}{\log{t_*(R)}}\right) = 1 \text{ and } t_*(R)>e, \text{ for sufficiently large $R$}.
\end{align}
We claim that the last integral appearing in \eqref{integral} is  $o(t_*(R))$.
To see this set $c =2 \pi \lambda (1-\delta') $ and use a change of variable, $x=\frac{t}{\log t}$, which gives $\frac{dx}{dt} \geq \frac{1}{2\log t} \geq \frac{1}{4 \log x}$ for $t$ large enough. Hence setting $x_* = \frac{t_*(R)}{\log t_*(R)}$ the integral is upper bounded by
\begin{equation*}
\int_{x_*}^{\infty} 4 M  (\log{x}) e^{-cx}\,dx \leq c' (1/x_* + \log{x_*}) = o(t_*(R)).
\end{equation*}

%Setting $c =2 \pi \lambda (1-\delta') $, the last integral appearing in \eqref{integral} can be written as
%\begin{align*}
%\sum_{k=1}^{\infty} \int_{kt^*}^{(k+1)t^*} M e^{-c \frac{t}{\log{t}}}dt \leq
%t_*(R) \sum_{k=1}^{\infty} M \exp{\Bigl(-c \frac{kt_*(R)}{\log{(k+1)}+\log{t_*(R)}}\Bigr)} \\
%\leq t_*(R) \sum_{k=1}^{\infty}
%M \left(e^{-c\frac{t_*(R)}{\log{t_*(R)}}}\right)^{\frac{k}{\log{(k+1)+1}}} \leq t_*(R) \sum_{k=1}^{\infty} \left(\frac{1}{M}\right)^{c k^{\alpha}} \leq \frac{c't_*(R)}{(\log{M})^{\frac{1}{\alpha}}},
%\end{align*}
%for some $0<\alpha<1$. From \eqref{mbounds} we have that $\log{M} \geq c'' \log{R}$,

So we finally obtain that
\[
\E{\taucov} \leq t_*(R)(1 + o(1)), \text{ as $R \to \infty$}.
\]

From \eqref{defin} and \eqref{mbounds} we get
\[
(\alpha - \delta)\log{\frac{R}{\epsilon}}\leq c \frac{t_*(R)}{\log{t_*(R)}} \leq (\alpha + \delta)\log{\frac{R}{\epsilon}}\quad \text{ for $R$ large enough},
\]
and thus we conclude that
\[
\limsup_{R \to \infty} \frac{\E{\taucov}}{\frac{\alpha}{2 \pi \lambda}\log{R} \log{\log{R}}} \leq 1,
\]
which follows by letting $\delta$ and $\delta'$ go to 0.

%\Note{removed case $d\geq 3$}
%For the case $\mathbf{d \geq 3}$, employing the expected volume of the Wiener sausage in higher dimensions and the same arguments as before, we get that
%\[
%\limsup_{R \to \infty} \frac{\E{\taucov}}{\frac{\alpha}{c(d)r^{d-2} \lambda}\log{R}}  \leq 1.
%\]

We now proceed to show the \textbf{lower bound} % in $\mathbf{d =2}$
%\Note{Removed ``in $d=2$''}
\[
\liminf_{R \to \infty} \frac{\tcov}{\frac{\alpha}{2\pi \lambda}\log{R} \log{\log{R}}}\geq 1.
\]
To do so, we are going to use the equivalent definition of Minkowski dimension involving packings \cite[Chapter 5]{Mattila95}.
Letting
\[
K(A,\epsilon) = \max\{k \geq 1: \exists \quad B_1, \cdots, B_k \text{ disjoint balls of radius } \epsilon \text{ centered in } A  \},
\]
it is clear that $K(RA,\epsilon) = K(A, \frac{\epsilon}{R})$. For  $\delta>0$ there exist $K=K(RA,1)$ disjoint balls with centers in $RA$ and radius $1$ satisfying
\[
R^{\alpha-\delta} \leq K \leq R^{\alpha+\delta}, \text{ for $R$ large enough}.
\]
So, for $R$ large enough, we can pack the set $RA$ with points $x_1,\cdots,x_K$ (the centers of the balls) that are at distance at least $2$ from each other.
Let $U_t$ denote the number of centers $x_1, \cdots, x_K$ that have not been detected by time $t$.
Obviously we have that $\{\taucov > t \} \supset \{U_t \geq 1 \}$.

Recall that the Wiener sausage $W_{z,r}(t) = \cup_{s \leq t}B(z+\zeta(s),r)$ in two dimensions satisfies, for $\delta'>0$,
\begin{equation}\label{eq:asymp}
(1-\delta')2 \pi\frac{t}{\log{t}} \leq \E{\vol{W_{z,r}(t)}}  \leq (1+\delta')2 \pi\frac{t}{\log{t}}, \text{ for $t$ large enough}.
\end{equation}

Let $\epsilon>0$ be small and let $t^*=t^*(R)> e$ satisfy the equation
\begin{align}\label{tdefin}
\frac{t^*}{\log{t^*}} = \frac{\alpha - \epsilon - \delta}{2 \pi \lambda (1 + \delta')}\log{R}.
\end{align}
%We will drop the dependence on $R$ again to make the notation lighter.

Applying the second moment method to the random variable $U_{t^*}$ we obtain
\[
\pr{\taucov > t^*} \geq \frac{(\E{U_{t^*}})^2}{\E{U_{t^*}^2}},
\]
so in order to obtain a lower bound for $\pr{\taucov > t^*}$ it suffices to lower bound the first moment of $U_{t^*}$ and upper bound its second moment. We will show that $\pr{\taucov > t^*} \geq \frac{1}{1+o(1)}$, hence we will get that $\E{\taucov} \geq t^* \frac{1}{1+o(1)}$.

We have that $\E{U_{t^*}} = \sum_{i=1}^{K} \pr{x_i\text{ not detected by time }t^*}$, and using Lemma~\ref{lem:detection} we obtain that
\[
\pr{x_i\text{ not detected by time }t^*} = \exp(- \lambda \E{\text{vol}(W_{x_i})}),
\]
where $W_x=W_{x,r}(t^*) = \bigcup_{s \leq t^*} \B(x+\zeta(s),r)$.

Obviously $\E{\text{vol}(W_x)}$ is independent of $x$, and hence we get that
\begin{align}\label{timi}
\E{U_{t^*}} = K \exp(- \lambda \E{\text{vol}(W_0)}).
\end{align}
%Note that by \eqref{eq:asymp}, \eqref{tdefin} and \eqref{timi} we get that $\E{U_{t^*}} \to \infty$ as $R \to \infty$.
Now, for the second moment of $U_{t^*}$ we have
\begin{align}
\label{second}
\E{U_{t^*}^2} =\sum_{i=1}^{K} \sum_{j\neq i} \pr{x_i,x_j \text{ not detected by time $t^*$}} + \E{U_{t^*}}
\end{align}
and using Remark~\ref{rem:comp} we get that
\[
\pr{x_i,x_j \text{ not detected by time $t^*$}} = \exp(-\lambda \E{\text{vol}(W_{x_i} \cup W_{x_j})}).
\]
(Note that the two Wiener sausages $W_{x_i}$ and $W_{x_j}$ use the same driving Brownian motion.)

Writing
\[
\text{vol}(W_{x_i}\cup W_{x_j}) = \text{vol}(W_{x_i}) + \text{vol}(W_{x_j}) - \text{vol}(W_{x_i} \cap W_{x_j}),
\]
equation \eqref{second} becomes
\begin{align}\label{secondnew}
\E{U_{t^*}^2} \leq \exp\left(- 2\lambda \E{\text{vol}(W_0)}\right) \sum_{i=1}^{N} \sum_{j\neq i} \exp(\lambda \E{\text{vol}(W_{x_i} \cap W_{x_j})}) + \E{U_{t^*}}.
\end{align}
Thus it remains to upper bound $\E{\text{vol}(W_{x_i} \cap W_{x_j})}$ for all $i$ and $j$.
If $\|x_i-x_j\|_2 \leq (\log{R})^2$, then we may use the bound $\text{vol}(W_{x_i}  \cap W_{x_j}) \leq \text{vol}(W_{x_i})$.

Recall from \eqref{tdefin} that $t^*(R)=\Theta( \log{R} \log\log{R})$.
The idea is that if $x_i$ and $x_j$ are at distance greater than $(\log{R})^2$ apart, then it is very unlikely that the two sets $W_{x_i}$ and $W_{x_j}$ will intersect.
Specifically, when $\|x_i-x_j\|_2 \geq (\log{R})^2$ it is easy to see that the probability that the two sausages, $W_{x_i}$ and $W_{x_j}$,
intersect is smaller than the probability that a 2-dimensional Brownian motion has traveled distance greater than $\frac{1}{2}(\log{R})^2$ in $t^*$
time steps, and this last probability is bounded above by $ce^{-c (\log{R})^2}$ by the standard bound for the tail of a Gaussian.

When $\|x_i-x_j\|_2 \geq (\log{R})^2$, writing $S_1= \B(x_i,R)$ for the ball of radius $R$ centered at $x_i$ and defining inductively $S_k = \B(x_i,2^{k-1}R) \setminus \B(x_i,2^{k-2}R)$ for all $k \geq 2$, we can split the volume of $W_{x_i} \cap W_{x_j}$ as follows:
\begin{align*}
\E{\text{vol}(W_{x_i} \cap W_{x_j})} = \sum_{n=1}^{\infty} \E{\text{vol}(W_{x_i} \cap W_{x_j} \cap S_n)} \\
\leq c \pi R^2 e^{-c(\log{R})^2} + \sum_{n=2}^{\infty} c' 2^{2n} R^2 e^{-c' 2^{2n}R} \leq c'' R^{-M},
\end{align*}
where $c$, $c'$, $c''$ and $M$ are all positive constants. The first part of the first inequality follows from the discussion above, namely that if the intersection is nonempty, then the Brownian motion must have traveled distance greater than $\frac{1}{2}(\log{R})^2$ in less than $t^*$ steps. If this has happened, then we simply bound the intersection of the two Wiener sausages in the ball $\B(x,R)$ by the volume of the ball. The second part of the first inequality follows by the same type of argument, since now in order to have a nonempty intersection in the set $S_n$, the Brownian motion must have traveled distance at least $2^{n-2}R$ in less than $t^*$ steps, which again is exponentially small.

Finally, the sum appearing in \eqref{secondnew} is bounded above by
\begin{align}\label{integrals}
\sum_{i=1}^{K} \sum_{j \neq i} \1(\|x_i - x_j\|_2 \leq (\log{R})^2) e^{\lambda \E{\text{vol}(W_0)}}
    +  \sum_{i=1}^{K} \sum_{j \neq i} \1(\|x_i - x_j\|_2 > (\log{R})^2) e^{c'' \lambda R^{-M}}.
\end{align}
By \eqref{eq:asymp} and the definition of $t^*$ given in \eqref{tdefin} we get that \eqref{integrals} is bounded from above by
\[
c_1 K (\log{R})^4 R^{\alpha-\delta -\epsilon} + K^2 e^{c'' \lambda R^{-M}}
\]
and hence
\begin{align}\label{tetrag}
\E{U_{t^*}^2} \leq  \exp(-2\lambda \E{\text{vol}(W_0)})\left(c_1 K (\log{R})^4 R^{\alpha-\delta -\epsilon} + K^2 e^{c'' \lambda R^{-M}}\right)
 + \E{U_{t^*}}.
\end{align}
Therefore, putting all the estimates together we get that
\begin{align*}
\pr{U_{t^*}>0} \geq \frac{(\E{U_{t^*}})^2}{\E{U_{t^*}^2}}
\geq \frac{1}{e^{c'' \lambda R^{-M}} + \frac{1}{K}((\log R)^4 R^{\alpha-\delta-\epsilon} + e^{\lambda \E{\vol{W_0}}})}.
\end{align*}
Using the lower bound  $K \geq R^{\alpha - \delta}$, the upper bound for the expected volume from \eqref{eq:asymp} and the definition \eqref{tdefin} of $t^*$,
we deduce that
\[
\pr{U_{t^*}>0} \geq \frac{1}{1+o(1)},
\]
and thus $\tcov \geq \frac{1}{1+o(1)} t^*$.
Since $t^*$ satisfies \eqref{tdefin}, we deduce that
\[
\liminf_{R \to \infty} \frac{\tcov}{\frac{\alpha}{2\pi \lambda }\log{R} \log{\log{R}}} \geq \frac{1-\frac{\epsilon}{\alpha} - \frac{\delta}{\alpha}}{1+\delta'}
\]
and hence, letting $\epsilon$, $\delta$ and $\delta'$ go to 0, we get that
\[
\liminf_{R \to \infty} \frac{\tcov}{\frac{\alpha}{2\pi \lambda }\log{R} \log{\log{R}}} \geq 1.
\]
So, we have shown that
\begin{equation}\label{eq:statem}
\tcov \sim \frac{\alpha}{2\pi \lambda}\log{R} \log{\log{R}}, \text{ as } R \to \infty.
\end{equation}
Now, for $d=2$ it only remains to show the last part of the theorem, namely that $\frac{\taucov}{\tcov}$ converges to 1 in probability as $R \to \infty$.
For any $\gamma>0$ we have that
\begin{align*}
\pr{\left|\frac{\taucov}{\tcov} - 1\right|>\gamma} = \pr{\taucov> (1+\gamma)\tcov } + \pr{\taucov< (1-\gamma)\tcov} \\ \leq \E{Z_{(1+\gamma)\tcov}} + \pr{\taucov< (1-\gamma)\tcov}.
\end{align*}
From \eqref{mesi} and the definition of $M$ we have that
\[
\E{Z_t} \leq R^{\alpha+\delta} \exp\left(-2\pi \lambda (1-\delta')\frac{t}{\log t}\right).
\]
Plugging in $t= (1+\gamma)\tcov$, using \eqref{eq:statem} and taking $\delta'$ sufficiently small gives that
\[
\E{Z_{(1+\gamma)\tcov}} \to 0, \text{ as } R \to \infty.
\]
For $\epsilon, \delta, \delta'$ small enough we get that $(1-\gamma)\tcov < t^*$, so
\[
\pr{\taucov< (1-\gamma)\tcov} \leq \pr{\taucov \leq t^*} \leq \pr{U_{t^*} =0} \leq 1- \frac{(\E{U_{t^*}})^2}{\E{U_{t^*}^2}} = o(1).
\]
Hence we get the desired result that
\[
\pr{\left|\frac{\taucov}{\tcov} - 1\right|>\gamma} \to 0, \text{ as } R \to \infty.
\]

For dimensions $d\neq 2$, the same arguments carry through by employing the proper expression for the expected volume of the Wiener sausage
%\Note{Dealing with other dimensions.}
given in \eqref{eq:volwiener}. Then, we need to set $t_\star(R)$ and $t^\star(R)$ correspondingly. From \eqref{defin} and \eqref{tdefin},
it suffices to set $t_\star$ to satisfy
\begin{align*}
   \exp\left(\lambda (1-\delta') \sqrt{\frac{8t_\star(R)}{\pi}}\right)=M \quad \text{for $d=1$}\\
   \exp\left(\lambda (1-\delta') c(d)r^{d-2}t_\star(R)\right)=M \quad \text{for $d\geq 3$},
\end{align*}
and $t^\star$ to satisfy
\begin{align*}
   \lambda (1+\delta') \sqrt{\frac{8t^\star(R)}{\pi}}=(\alpha-\epsilon-\delta)\log R \quad \text{for $d=1$}\\
   \lambda (1+\delta') c(d)r^{d-2}t^\star(R)=(\alpha-\epsilon-\delta)\log R \quad \text{for $d\geq 3$}.
\end{align*}

\end{proof}

\begin{remark}
\rm{
While the limit defining Minkowski dimension in Definition~\ref{def:mink} may not exist, the corresponding $\limsup$  is denoted by $\overline{\dim}_M(A)$ and always exists.
The proof of Theorem~\ref{thm:coverage2} also shows that for $d=2$
\[
\limsup_{R \to \infty}\frac{\E{\taucov(RA)}}{\frac{1}{2\pi\lambda}\log R \log \log R} = \overline{\dim}_M(A)
\]
and similarly for $\liminf$ and other dimensions.
}
\end{remark}

\begin{remark}\rm{
The estimates in the proof of Theorem~\ref{thm:coverage2} actually imply that a.s.\ $\frac{\taucov(RA)}{\E{\taucov(RA)}} \to 1$ as $R \to \infty$.}
\end{remark}

%############################################################################################
%############################################################################################
%############################################################################################
\section{Percolation time}\label{sec:percolation}
In this section we give the proof of Theorem~\ref{thm:percolation}.
%  for the case when the node $u$ is non-mobile, but the proof
% can easily be extended to the case where $u$ moves
% according to an independent Brownian motion by employing the cubes $S_i$ as in~\cite[Section 4]{SS10}.
We will observe
the process $(G_i)_{i \geq 0}$ in discrete time steps $i=0, 1, \ldots$ in order to be able to apply a multi-scale argument.
For a nonnegative integer $i$ we define the event $J_i$ that $u$ does not belong to the infinite component at time $i$; more formally,
\[
J_i = \{u \notin \cup_{y \in C_{\infty}(i)}B(y,r) \}.
\]
Then it is easy to see that, for all $t$, we have
\[
\pr{\tperc > t} \leq \pr{\cap_{i=0}^{\lfloor t\rfloor}J_i}.
\]

We define $Q_L$ to be the cube with side length $L$ centered at the origin and with sides parallel to the axes of $\rset^d$.
We tessellate $Q_L$ into subcubes of side length $\ell<L$ , which we call \emph{cells}.
We now state two key propositions that lie at the heart of our argument.

The first proposition says that, provided every cell of the tessellation contains sufficiently many nodes, then we can couple the positions of these nodes
after sufficiently many steps with the nodes of an independent Poisson point process of only slightly smaller intensity on a smaller cube.
We prove this proposition in Section~\ref{sec:coupling}.

\begin{proposition}\label{pro:ss}
   Fix $K > \ell >0$ and consider the cube $Q_K$ tessellated into cells of side length $\ell$.
   Let $\Phi_0$ be an arbitrary point process at time $0$
   that contains at least $\beta \ell^d$ nodes at each cell of the tessellation for some $\beta>0$.
   Let $\Phi_\Delta$ be the point process obtained at time $\Delta$ from $\Phi_0$ after the nodes
   have moved according to standard Brownian motion for time $\Delta$.
   Fix $\epsilon\in(0,1)$ and let $\Xi$ be an independent Poisson point process with intensity $(1-\epsilon)\beta$. Then there exists a coupling of $\Xi$ and $\Phi_{\Delta}$ and constants $c_1,c_2,c_3$ depending only on $d$ such that, if
   $\Delta \geq \frac{c_1 \ell^2}{\epsilon^2}$ and $K' \leq K - c_2  \sqrt{\Delta \log \epsilon^{-1}}>0$, then
   the nodes of $\Xi$ are a subset of the nodes of $\Phi_\Delta$ inside the cube $Q_{K'}$ with probability at least
   $$
      1-\frac{K^d}{\ell^d}\exp(-c_3\epsilon^2\beta\ell^d).
   $$
\end{proposition}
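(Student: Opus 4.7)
My plan is to reverse-engineer $\Xi$: introduce an auxiliary PPP $\tilde\Xi_0$ at time $0$ of intensity slightly less than $\beta$ on a slightly shrunken cube, match each of its nodes to a distinct node of $\Phi_0$ in the same cell of the tessellation, and then couple the paired Brownian motions so that with high probability the two endpoints at time $\Delta$ coincide. The matched endpoints then lie inside $\Phi_\Delta$ by construction, and a further thinning gives the target PPP $\Xi$ of intensity $(1-\epsilon)\beta$ on $Q_{K'}$.

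Concretely, set $r_\Delta = c\sqrt{\Delta\log\epsilon^{-1}}$ and $K''=K'+r_\Delta$, which lies inside $Q_K$ by the hypothesis on $K'$. Let $\tilde\Xi_0$ be a PPP of intensity $\alpha\beta$ on $Q_{K''}$, with $\alpha = 1-\epsilon/10$, independent of the Brownian motions driving $\Phi_0$. Let $G$ be the event that $|\tilde\Xi_0 \cap c_i| \leq \beta\ell^d$ for every cell $c_i \subset Q_{K''}$. Since $|\tilde\Xi_0 \cap c_i|$ is Poisson of mean $\alpha\beta\ell^d$, a standard Poisson tail estimate gives $\pr{|\tilde\Xi_0\cap c_i|>\beta\ell^d} \leq e^{-c_3\epsilon^2\beta\ell^d}$, and a union bound over cells yields $\pr{G^\compl} \leq \frac{K^d}{\ell^d}e^{-c_3\epsilon^2\beta\ell^d}$, which is precisely the failure probability in the proposition.

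On $G$, arbitrarily pair each $y\in\tilde\Xi_0\cap c_i$ with a distinct $y'\in\Phi_0\cap c_i$, so that $\|y-y'\|_2 \leq \ell\sqrt d$. For each such pair, couple the two Brownian motions via a maximum coupling of their time-$\Delta$ Gaussian laws $\mathcal N(y,\Delta I)$ and $\mathcal N(y',\Delta I)$; the two endpoints coincide with probability $q(y,y') \geq 1 - C\ell\sqrt d/\sqrt\Delta \geq 1-\epsilon/10$, using $\Delta \geq c_1\ell^2/\epsilon^2$. Different pairs use independent driving Brownian motions, so the motions of the nodes of $\tilde\Xi_0$ remain jointly independent with the correct marginal. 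To remove the configuration dependence of $q(y,y')$, perform an extra Bernoulli thinning: conditional on a successful coupling, retain $y$ with probability $p_\star/q(y,y')$, where $p_\star := 1 - C\ell\sqrt d/\sqrt\Delta$ is a uniform lower bound on $q$. This makes the per-source retention rate equal to $p_\star$ on $G$, so the retained set is a $p_\star$-thinning of $\tilde\Xi_0$, whose Brownian images all coincide with the endpoints of their $\Phi_0$ partners and therefore lie in $\Phi_\Delta$. Restricting to $Q_{K'}$ and applying one last pointwise Bernoulli thinning brings the density down to exactly $(1-\epsilon)\beta$; here the buffer $r_\Delta$ ensures, via $\pr{|Z|>r_\Delta}\leq\epsilon/10$ for $Z\sim\mathcal N(0,\Delta I)$, that the density of the image process at every $z\in Q_{K'}$ is already at least $\alpha p_\star(1-\epsilon/10)\beta \geq (1-\epsilon)\beta$. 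On $G^\compl$ we declare $\Xi$ to be an independent PPP of intensity $(1-\epsilon)\beta$ on $Q_{K'}$, so that marginally $\Xi$ has the required distribution, and the event $\Xi \not\subseteq \Phi_\Delta$ on $Q_{K'}$ can only occur on $G^\compl$.

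\emph{Main obstacle.} The delicate point is preserving the exact unconditional law of $\Xi$ in a multi-stage construction that depends on the random matching and on the Brownian coupling outcomes. The uniformizing thinning by $p_\star/q(y,y')$ is what forces a constant per-source retention rate on $G$, so that the retained process is a clean thinning of a PPP, while the buffer of width $r_\Delta$ is what lets the heat-kernel convolution of the initial intensity reproduce the target uniform density on $Q_{K'}$ up to a multiplicative $1-O(\epsilon)$. Once these two ingredients are in place, the only genuine source of failure is the cell-level Poisson concentration, and the bound from Step~1 closes the argument.
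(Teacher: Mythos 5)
Your overall architecture matches the paper's: an auxiliary Poisson process at time $0$ dominated cell-by-cell by $\Phi_0$ (Chernoff bound plus a union bound over the $K^d/\ell^d$ cells), an injective pairing within cells, a coupling of the paired Brownian motions, and a final thinning. You also correctly anticipate the central danger---that the success probability $q(y,y')$ of the maximal coupling depends on the configuration---and your uniformizing Bernoulli thinning by $p_\star/q(y,y')$ does make the \emph{retention probability} of each source node a constant. But this fixes only half of the problem, and the other half is fatal as written. Under the maximal coupling of $\mathcal{N}(y,\Delta I)$ and $\mathcal{N}(y',\Delta I)$, the \emph{location} of the coalesced endpoint, conditioned on success, has density $\min\{\phi_\Delta(z-y),\phi_\Delta(z-y')\}/q(y,y')$, which depends on the partner $y'$, i.e.\ on $\Phi_0$; moreover, which $y'$ a given $y$ receives depends on the whole configuration of $\tilde\Xi_0$ inside its cell (the pairing must be injective). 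So the displacement kernels are neither functions of $y$ alone nor independent of $\Phi_0$, the displacement/marking theorem does not apply, and your image process is not a Poisson process with a deterministic intensity: conditionally on $\Phi_0$ its law genuinely varies with $\Phi_0$. No subsequent thinning can convert it into a Poisson process of intensity $(1-\epsilon)\beta$ that is \emph{independent} of $\Phi_0$, which is exactly what the proposition must deliver (downstream, $\Xi$ must be independent of the $\sigma$-field of the past so that the crossing event $E_1$ has unconditional probability). A secondary defect: you only run the construction on $G$ and substitute a fresh PPP on $G^{\compl}$; since $\tilde\Xi_0$ conditioned on $G$ is no longer Poisson, the resulting mixture does not have the exact PPP law you claim.

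The paper repairs precisely this point by abandoning the maximal coupling: it fixes a single sub-density $g(z)=(2\pi\Delta)^{-d/2}\exp\bigl(-(\|z\|_2+\sqrt{d}\,\ell)^2/(2\Delta)\bigr)$ lying below \emph{both} transition densities for every admissible pair, and couples by sampling the common endpoint from $g/(1-\psi)$ with probability $1-\psi=\int g$. Both the retention probability and the retained displacement law are then fixed functions, independent of $\Phi_0$ and of the pairing, so the retained-and-displaced process is an honest thinned displaced PPP with deterministic intensity; Lemma~\ref{lem:psi} shows this intensity exceeds $(1-\epsilon)\beta$ on $Q_{K'}$, and the final thinning is then legitimate. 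If you replace your maximal coupling by this sub-maximal one (and define the thinning and displacement of the auxiliary process without any reference to $\Phi_0$, invoking the domination event only to conclude the subset relation), your argument becomes the paper's.
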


The second proposition, which we prove in Section~\ref{sec:density},
says that the above condition that each cell contains sufficiently many nodes is satisfied at an arbitrary constant fraction of time steps with
high probability.

\begin{proposition}\label{pro:density}
   Let $t>0$ be a sufficiently large integer and $\xi,\epsilon \in (0,1)$ be two constants.
   Suppose that the cube $Q_L$, for $L=t$, is tessellated into cells of side length $\ell$, where
   $\ell^d \geq C \log^3 t$ for some sufficiently large constant $C$.
   For $i=0, 1, \ldots$ let
   \[
   A_i = \{\text{\rm{at time $i$ all cells contain }} \geq (1-\xi)\lambda \ell^d \text{\rm{ nodes of }} \Pi_i \}.
   \]
   Then there exists a positive constant $c$ such that
   \begin{equation}
      \pr{\sum_{i=0}^{t-1} \1(A_i) \geq (1-\epsilon)t} \geq
      1-\exp\left(-c\frac{\lambda t}{\log^{3+6/d} t}\right).
      \label{eq:probdensity}
   \end{equation}
\end{proposition}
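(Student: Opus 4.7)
The plan is a time-blocking argument combined with a per-cell Poisson Chernoff estimate and a coupling in the spirit of Proposition~\ref{pro:ss} to decouple distant blocks. First, I would establish a single-time density bound. At any fixed time $i$ the point process $\Pi_i$ is again Poisson of intensity $\lambda$, so for any single cell $C$ of side length $\ell$ the count $|C\cap\Pi_i|$ is Poisson with mean $\lambda\ell^d$ and the standard Chernoff tail gives $\pr{|C\cap\Pi_i|<(1-\xi)\lambda\ell^d}\leq \exp(-c_\xi\lambda\ell^d)$. The hypothesis $\ell^d\geq C\log^3 t$ together with a union bound over the at most $(L/\ell)^d\leq t^d$ cells of $Q_L$ yields $\pr{A_i^{\compl}}\leq \exp(-c\lambda\log^3 t)$ for $t$ large.

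Next I would partition $[0,t-1]$ into $M=\lceil t/\Delta\rceil$ blocks of length $\Delta:=\lfloor\log^{3+6/d}t\rfloor$ and let $B_j:=\bigcup_{i\in\text{block}\,j}A_i^{\compl}$. Then $\sum_i\1(A_i^{\compl})\leq \Delta\sum_j\1(B_j)$, so it suffices to show $\pr{\sum_j\1(B_j)>\epsilon M}\leq \exp(-c\lambda t/\log^{3+6/d}t)$. A within-block union bound gives $\pr{B_j}\leq \Delta\exp(-c\lambda\log^3 t)=o(\epsilon)$. The block length is chosen so that $\Delta\asymp\log^{3+6/d}t\gg \ell^2\asymp\log^{6/d}t$, which is exactly the regime in which the coupling of Proposition~\ref{pro:ss} becomes effective.

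I would then decouple the blocks via a rolling coupling. Conditional on the configuration at the start of block~$j$ satisfying the density requirement, after the $\Delta$ time steps of the block the point process inside a slightly smaller cube is coupled with a fresh, independent Poisson point process of intensity $(1-\epsilon')\lambda$, with failure probability at most $(L/\ell)^d\exp(-c\lambda\ell^d)$ per block. On the event that all $M$ couplings succeed, the indicators $\1(B_j)$ are dominated by an i.i.d.\ sequence of Bernoullis of parameter $p:=\Delta\exp(-c\lambda\log^3 t)\ll\epsilon$, so a standard Chernoff bound for the binomial sum yields $\pr{\sum_j\1(B_j)>\epsilon M}\leq \exp(-c'\epsilon M)=\exp(-c'\epsilon\,t/\log^{3+6/d}t)$, matching the desired rate.

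The main obstacle is iterating the coupling across the $M$ blocks: Proposition~\ref{pro:ss} presupposes that every cell contains at least $\beta\ell^d$ nodes at the \emph{start} of the block, which is precisely the event $A_{j\Delta}$ that we are trying to control. Breaking this circularity demands running the argument with slack --- e.g.\ establishing a stronger $(1-\xi/2)$-density at block starts so that the $(1-\xi)$-density we need holds throughout the subsequent block --- and then verifying that the bootstrap closes, in the sense that the coupled fresh Poisson process of intensity $(1-\epsilon')\lambda$ satisfies the stronger density condition at the next block start with the same single-time Chernoff probability. Handling the shrunken cube $Q_{K'}\subset Q_K$ produced by the coupling and keeping the cumulative coupling-failure probability over all $M$ blocks under $\exp(-c\lambda t/\log^{3+6/d}t)$ is the bulk of the technical work; the single-time Chernoff and the block-level Chernoff are both routine by comparison.
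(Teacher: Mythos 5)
There is a genuine gap, and it sits exactly where you locate "the bulk of the technical work": the single-scale bootstrap you sketch does not close, and patching it is not a technicality but requires a different (multi-scale) construction, which is what the paper does. Two distinct problems arise. First, each application of the coupling of Proposition~\ref{pro:ss} degrades the intensity by a constant factor: starting from $\beta\ell^d$ nodes per cell it yields a fresh Poisson process of intensity only $(1-\epsilon')\beta$, so the density threshold you can guarantee at the next block start is strictly weaker than the one you assumed at the current block start. Chaining this over $M=t/\Delta$ blocks multiplies the losses $(1-\epsilon')^M\to 0$, so no fixed amount of "slack" (e.g.\ demanding $(1-\xi/2)$-density at block starts) survives the iteration. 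Second, even setting that aside, your domination of the $\1(B_j)$ by i.i.d.\ Bernoullis is only valid conditional on density holding at every block start; on the complementary event you have no recovery mechanism, and the best unconditional bound available from a union of single-time Chernoff estimates with cells of volume $\log^3 t$ is $\exp(-c\lambda\log^3 t+O(\log t))$ --- polylogarithmic in the exponent, and hence far weaker than the target $\exp(-c\lambda t/\log^{3+6/d}t)$.

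The paper resolves both issues with a multi-scale scheme using $\kappa=\Theta(\log t)$ scales. At the coarsest scale the cube is $Q_{t^2}$ and the cells have side $\ell_1=\Theta(\sqrt{t}/\log^{3/2}t)$, so their volume is polynomial in $t$ and a crude union bound over all $t$ time steps and all cells shows density holds \emph{always} with probability $1-\exp(-c\lambda\ell_1^d)$; this is the base case your scheme lacks. Passing from scale $j-1$ to scale $j$ shrinks the cells by a constant factor, slightly relaxes the threshold ($\xi_{j-1}\to\xi_j$ with increments $\xi/(2(\kappa-1))$), and discards an $O(\epsilon/\kappa)$ fraction of time steps; recovery of a fine cell at scale $j$ is deduced from density of the \emph{coarser} cells at scale $j-1$ at a nearby earlier time (Lemma~\ref{lem:scalemarkov}), not from density at the same scale, which is what breaks your circularity. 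Only $\kappa$ couplings are chained, so the cumulative degradation of intensity and of the time fraction stays bounded by $\xi$ and $\epsilon$ respectively. Within each scale the block indicators are then genuinely dominated by i.i.d.\ Bernoullis and the binomial Chernoff bound you invoke applies, yielding the stretched-exponential rate. Interpolating from $\ell_1\approx\sqrt{t}$ down to $\ell\approx\log^{3/d}t$ by constant factors is precisely why $\Theta(\log t)$ scales are needed, and why the final exponent carries the $\kappa^6=\mathrm{polylog}(t)$ loss visible in the statement.
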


\begin{proof}[\textbf{Proof of Theorem~\ref{thm:percolation}}]
Let $u$ be a node that is at the origin at time $0$ independent of the nodes of $\Pi_0$.
We assume that $u$ is non-mobile; the proof can easily be extended to mobile $u$ using translated cubes that track the motion of $u$ as in~\cite[Section~4]{SS10}.

Let $t$ be an integer sufficiently large. We consider the cube $Q_L$, for $L=t$.
Set $H_t$ to be the event that $u$ has \emph{never} been in the infinite component from time
$0$ to $t-1$. More formally, we define
\[
H_t =\cap_{i=0}^{t-1}J_i =  \cap_{i=0}^{t-1}\{u \notin \cup_{y \in C_{\infty}(i)} B(y,r)\}.
\]
%As in Penrose and Pisztora~\cite{PenPis} let $\tilde{\theta}(\lambda)$ be the probability that there is an unbounded cluster that intersects the ball of radius $r/2$
%centered at $0$.
% Fix $\lambda > \lambda_\critical$.
% Let $K_i$ be the event that at time $i$ there exists a unique crossing component of $Q_L$ that intersects the infinite component.
% Define a giant crossing component in $Q_L$ at time $i$ to be a connected component of $G_i$ restricted to $Q_L$ that has a path connecting
% each pair of opposite faces of $Q_L$ and contains at least $\frac{\lambda \tilde{\theta}(\lambda)}{4}L^d$ nodes.
% We say that a path connects two faces of $Q_L$ if for each face there is at least one node of the path within distance $r$ to the face.

% We then define $\tilde{H}_t$ to be the event that $u$ has never been within distance $r$ from a giant crossing component of
% $Q_{L}$ from time $0$ to $t-1$. Let $K_t$ be the event that in each step from $0$ to $t-1$, a giant crossing component of $Q_L$
% intersects the infinite component.
% Therefore, if $K_t$ holds and $u$ belongs to a giant crossing component of $Q_L$ at some time step from $0$ to $t-1$, then at the same time step
% $u$ will also belong to the infinite component. We can then conclude that $K_t \cap {\tilde H_t}^\compl \subseteq H_t^\compl$, which gives
% $$
%    \pr{H_t} \leq \pr{K_t^\compl \cup \tilde H_t} \leq \pr{K_t^\compl} + \pr{\tilde H_t}.
% $$
We say that a cube $Q_L$ has a \emph{crossing component} at a given time $i$ if among the nodes in $Q_L$
there exists a connected component that has a path connecting each pair of opposite faces of $Q_L$.
(A path \emph{connects} two faces of $Q_L$ if for each face there is
at least one node of the path within distance $r$ of the face.)
We then define $\tilde{H}_t$ to be the event that $u$ has never been within distance $r$ of a crossing component of
$Q_{L}$ from time $0$ to $t-1$.
Let $K_t$ be the event that, in each step from $0$ to $t-1$, there exists a unique crossing component of $Q_L$ and it
intersects the infinite component.
Therefore, if $K_t$ holds and $u$ belongs to a crossing component of $Q_L$ at some time step from $0$ to $t-1$, then at the same time step
$u$ will also belong to the infinite component. We can then conclude that $K_t \cap {\tilde H_t}^\compl \subseteq H_t^\compl$, which gives
$$
   \pr{H_t} \leq \prsmall{K_t^\compl \cup \tilde H_t} \leq \prsmall{K_t^\compl} + \prsmall{\tilde H_t}.
$$
By \cite[Theorems 1 and 2]{PenPis} and by taking the union bound over all time steps, we have
\begin{equation}
   \pr{K_t^\compl}\leq t\exp(-c_1 L).
   \label{eq:kt}
\end{equation}

%We say that a cube $Q_L$ has a crossing component at a given time $i$ if among the nodes in $Q_L$
%there exists a connected component that has a path connecting each pair of opposite faces of $Q_L$.
%We say that a path connects two faces of $Q_L$ if for each face there is
%at least one node of the path within distance $r$ to the face.
%We then define $\tilde{H}_t$ to be the event that $u$ has never been within distance $r$ from a crossing component of
%$Q_{L}$ from time $0$ to $t-1$.
%Let $K_t$ be the event that in each step from $0$ to $t-1$, a crossing component of $Q_L$
%intersects the infinite component. Therefore, if $K_t$ holds and $u$ belongs to a crossing component of $Q_L$ at some time step from $0$ to $t-1$, then at the same time step
%$u$ will also belong to the infinite component. We can then conclude that $K_t \cap {\tilde H_t}^\compl \subseteq H_t^\compl$, which gives
%$$
%   \pr{H_t} \leq \pr{K_t^\compl \cup \tilde H_t} \leq \pr{K_t^\compl} + \pr{\tilde H_t}.
%$$
%Since $\lambda > \lambda_\critical$, for any fixed time $i$, the probability that $Q_L$ does \emph{not} contain a crossing component is at most
%$\exp(-c_1 L^{d-1})$, for some constant $c_1$, which follows from Penrose and Pisztora~\cite{PenPis}. Hence, if we take the
%union bound over all time steps,
%we have
%\begin{equation}
%   \pr{K_t^\compl}\leq t\exp(-c_1 L^{d-1}).
%   \label{eq:kt}
%\end{equation}

We will now derive an upper bound for $\prsmall{\tilde H_t}$.
Let $\xi>0$ be a sufficiently small constant such that $(1-\xi)\lambda>\lambda_\critical$.
Take the cube $Q_{2L}$ and tessellate it into cells of side length $\ell$, where $\ell=C_1\log^{3/d}t$,
for $C_1$ a sufficiently large constant in order to satisfy the assumptions of Proposition~\ref{pro:density}.
Call a cell \emph{dense} if it contains more than $(1-\xi)\lambda\ell^d$ nodes.
For $\delta>0$, let $D$ be the event that all cells inside $Q_{2L}$ are dense for at least $(1-\delta)t$ time steps. Applying
Proposition~\ref{pro:density} we obtain a constant $c_2$ such that
$$
   \pr{D} \geq 1-\exp\left(-c_2 \frac{\lambda t}{\log^{3+6/d}t}\right).
$$

We use the event $D$ to obtain an upper bound for $\prsmall{\tilde H_t}$ via
\begin{equation}
   \prsmall{\tilde H_t}
   \leq \prsmall{\tilde H_t \cap D} + \prsmall{D^\compl}
   \leq \prsmall{\tilde H_t \cap D} + \exp\left(-c_2 \frac{\lambda t}{\log^{3+6/d}t}\right).
   \label{eq:d}
\end{equation}
On the event $D$, by definition, we can find a collection $S$ of $(1-\delta)t$ time steps
for which all cells of side length $\ell$ are dense inside the cube $Q_{2L}$.
We set $\Delta = C_2 \ell^2$ for some sufficiently large constant $C_2$.
We define $\tau_1$ as the first time step for which all cells of $Q_{2L}$ are dense.
We now define $\tau_{i+1}$ recursively as the first time step after $\tau_i+\Delta$ for which all cells are dense.
Obviously, $\tau_1 < \tau_2 < \cdots$ and if we take $k=c_3t/\Delta=c_3' t/\log^{6/d}t$ for some constant $c_3$, then
we can ensure that on $D$ we have $\tau_k \leq t-1$.

For each $i$, let $A_i$ be the event that $u$ does
\emph{not} belong to a crossing component of $Q_L$ at time $\tau_i+\Delta$. Since when $D$ holds we
have $\tau_k \leq t-1$, we can write
\begin{equation}
   \prsmall{\tilde H_t \cap D} \leq \pr{\bigcap_{i=1}^k A_i \cap D}.
   \label{eq:addingA}
\end{equation}
For each $i$, let $\mathcal{F}_i$ be the $\sigma$-field induced by the locations of the nodes of $\Pi_0$ from time $0$ to $\tau_i$.
We now claim that for $t$ sufficiently large there exists a positive constant $c_4$ such that
\begin{equation}\label{eq:upper}
\prcond{A_i}{\mathcal{F}_i} < e^{-c_4}.
\end{equation}

We  will define two events $E_1,E_2$ such that for any $F \in \mathcal{F}_i$ we have
\begin{equation}\label{eq:cond}
\prcond{A_i}{F} \leq \prcond{E_1^\compl}{F} + \prcond{E_2^\compl}{F}.
\end{equation}
Take $\epsilon>0$ sufficiently small so that $(1-\epsilon)(1-\xi)\lambda > \lambda_\critical$, and let $\Xi$ be
an independent Poisson point process of intensity $(1-\epsilon)(1-\xi)\lambda$. We define the events
\begin{align*}
&E_1 = \{u \text{ belongs to a crossing component of $\Xi$ in $Q_L$} \} \text{  and}
\\& E_2 = \{\exists \text{ a coupling of } \Xi \text{ and } \Pi_{\tau_i + \Delta} \text{ so that } \Xi \subset \Pi_{\tau_i + \Delta} \text{ in } Q_L \},
\end{align*}
where ``$\Xi \subset \Pi_{\tau_i + \Delta} \text{ in } Q_L$'' means that the nodes of $\Xi$ that lie inside the cube $Q_L$ are a subset of the nodes of
$\Pi_{\tau_i + \Delta}$.

Note that when $E_1$ and $E_2$ both
hold, then $u$ belongs to a crossing component of $Q_L$ at time $\tau_i + \Delta$, which implies that $A_i$ does not hold.
Since the intensity of $\Xi$ is strictly larger than $\lambda_\critical$ and $E_1$ is independent of $F$ by construction,
we obtain $\prcond{E_1}{F} \geq c_5$ for some constant $c_5 \in (0,1)$ by~\cite[Theorem 1]{PenPis}.

All cells are dense at time $\tau_i$, by the definition of $\tau_i$.
Taking $K$ and $K'$ appearing in Proposition~\ref{pro:ss} to be $K=2L$ and $K'=L$,
we see by the choice of $\Delta$ that for large enough $t$ the condition for $K'$
in Proposition~\ref{pro:ss} is satisfied and thus we obtain, uniformly
over all $F \in \mathcal{F}_i$, that, for a positive constant $c_6$,
\[
\prcond{E_2^{\compl}}{F} \leq \exp\left(-c_6 \lambda \log^3t\right).
\]
Plugging everything into \eqref{eq:cond} we get
\[
\prcond{A_i}{F} \leq 1-c_5 + \exp\left(-c_6 \lambda \log^3t\right),
\]
which can be made strictly smaller than $1$ by taking $t$ sufficiently large. This establishes \eqref{eq:upper}.

Note that by definition we have $\tau_i+\Delta < \tau_{i+1}$ for all $i$, which
gives $A_i \in \mathcal{F}_{i+1}$. We can write (\ref{eq:addingA}) as
$$
   \prsmall{\tilde H_t \cap D} \leq \pr{\cap_{i=1}^k A_i} = \prod_{i=2}^k \prcond{A_i}{\cap_{j=1}^{i-1} A_j}\pr{A_1},
$$
which by \eqref{eq:upper} translates to
$$
   \prsmall{\tilde H_t \cap D} \leq \exp\left(-c_4 k\right) \leq \exp\left(-c_7\frac{t}{\log^{6/d}t}\right),
$$
for a positive constant $c_7$.
Plugging this into (\ref{eq:d}) concludes the proof of
Theorem~\ref{thm:percolation}.

\end{proof}

%############################################################################################
\subsection{Coupling}\label{sec:coupling}
In this section we give the proof of Proposition~\ref{sec:coupling}.
We begin by stating and proving a small technical lemma that will be used in the proof.

\begin{lemma}\label{lem:psi}
Assume $\epsilon \in (0,1)$ and $\rho>0$. Let $\Delta \geq 16 d^2 \rho^2/\epsilon^2$ and $R \geq 2 \sqrt{d \Delta \log (8d \epsilon^{-1})}$.
Define
\[
g(z)= \frac{1}{(2 \pi \Delta)^{d/2}} \exp\left(-\frac{(\|z\|_2 + \rho)^2}{2 \Delta} \right)
\]
on $\R^d$. Then we have
\[
\int_{B(0,R)} g(z) \,dz \geq 1 - \epsilon/2.
\]
\end{lemma}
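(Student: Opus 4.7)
The plan is to rewrite $\int_{B(0,R)} g(z)\,dz$ as an expectation against the $d$-dimensional Gaussian measure with covariance $\Delta I_d$ and then bound it via Jensen's inequality together with a standard Gaussian tail estimate. Letting $Z$ be such a Gaussian vector with density $p(z)=(2\pi\Delta)^{-d/2}\exp(-\|z\|_2^2/(2\Delta))$, the expansion $(\|z\|_2+\rho)^2=\|z\|_2^2+2\rho\|z\|_2+\rho^2$ gives $g(z)=p(z)h(z)$, where $h(z)=\exp(-(2\rho\|z\|_2+\rho^2)/(2\Delta))\in(0,1]$. Consequently
\[
\int_{B(0,R)} g(z)\,dz=\E{h(Z)\1(\|Z\|_2\le R)}\ge \E{h(Z)}-\pr{\|Z\|_2>R},
\]
and it suffices to show that the first term is at least $1-O(\epsilon)$ and the second is of order $\epsilon^2$, with explicit constants small enough that the two together leave $\epsilon/2$ of slack.

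For $\E{h(Z)}$ I would apply Jensen's inequality to the convex function $x\mapsto e^{-x}$ and use Cauchy--Schwarz to bound $\E{\|Z\|_2}\le\sqrt{d\Delta}$. The hypothesis $\Delta\ge 16 d^2\rho^2/\epsilon^2$ is tailored so that the resulting exponent $\rho\sqrt{d/\Delta}+\rho^2/(2\Delta)$ is a small explicit multiple of $\epsilon$ (the first summand is at most $\epsilon/(4\sqrt d)$ and the second at most $\epsilon^2/(32d^2)$), after which $e^{-x}\ge 1-x$ converts this into the desired lower bound on $\E{h(Z)}$. For the tail I would use $\|Z\|_2\le\sqrt d\,\|Z\|_\infty$, a union bound over the $d$ coordinates, and the standard estimate $\pr{|N(0,1)|>t}\le 2e^{-t^2/2}$, giving $\pr{\|Z\|_2>R}\le 2d\exp(-R^2/(2d\Delta))$. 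The assumption $R\ge 2\sqrt{d\Delta\log(8d\epsilon^{-1})}$ is calibrated precisely so that $R^2/(2d\Delta)\ge 2\log(8d\epsilon^{-1})$, producing a tail bound of order $\epsilon^2/d$.

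I do not anticipate any real obstacle: the lemma is a quantitative Gaussian concentration statement, and its entire content is (i) choosing the decomposition $g=p h$ to expose a probabilistic interpretation and (ii) tracking that the specific numerical constants in the hypotheses pass through Jensen's inequality and the Gaussian tail bound with enough slack. The only mildly delicate point will be balancing how much of the $\epsilon/2$ error budget to spend on each of the two terms, but any reasonable split (for instance $9\epsilon/32$ for $\E{h(Z)}$ and $\epsilon/32$ for the tail) suffices.
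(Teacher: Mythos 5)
Your proof is correct, but it takes a genuinely different route from the paper's. The paper \emph{tensorizes}: it defines the one-dimensional analogue $\psi(x)=(2\pi\Delta)^{-1/2}\exp(-(|x|+\rho)^2/(2\Delta))$, uses the inequality $\sum_{i}(|z_i|+\rho)^2\ge(\|z\|_2+\rho)^2$ to get $\prod_i\psi(z_i)\le g(z)$, restricts the integral to the inscribed cube $[-R/\sqrt d,R/\sqrt d]^d\subset B(0,R)$, and then performs two elementary one-dimensional estimates (the integral of $\psi$ over $\R$ is at least $1-\epsilon/(4d)$, and its tail beyond $R/\sqrt d$ is at most $\epsilon/(8d)$), finishing with $(1-\epsilon/(2d))^d\ge1-\epsilon/2$. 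You instead stay in $d$ dimensions, factor $g=p\cdot h$ with $p$ the $N(0,\Delta I_d)$ density and $h(z)=\exp(-(2\rho\|z\|_2+\rho^2)/(2\Delta))\in(0,1]$, and split the integral as $\E{h(Z)}-\pr{\|Z\|_2>R}$, handling the first term by Jensen plus $\E{\|Z\|_2}\le\sqrt{d\Delta}$ and the second by a coordinatewise union bound. Your constants check out: the exponent in Jensen is at most $\epsilon/(4\sqrt d)+\epsilon^2/(32d^2)$ and the tail is at most $\epsilon^2/(32d)$, which together are comfortably below $\epsilon/2$. What your route buys is a more transparent probabilistic reading of the hypotheses (each one calibrates exactly one of the two error terms) and no need for the product lower bound; what the paper's route buys is that everything reduces to one-dimensional calculus with no appeal to Jensen or Cauchy--Schwarz. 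Either argument is acceptable.
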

\begin{proof}
Let $\psi(x) =\frac{1}{(2 \pi \Delta)^{1/2}} \exp\left(-\frac{(|x|+\rho)^2}{2 \Delta} \right) $, for $x \in \R$.
\newline Note that $\sum_{i=1}^{d} (|z_i|+\rho)^2 = \|z\|_2^2 + 2 \rho \|z \|_1 +  \rho^2 d\geq (\|z \|_2 + \rho)^2$,
so
\begin{equation}\label{eq:prod}
\prod_{i=1}^{d} \psi(z_i) \leq g(z), \text{ for } z= (z_1,\ldots,z_d) \in \R^d.
\end{equation}
Next observe that
\begin{equation*}
\int_{-\infty}^{\infty} \psi(x) \,dx = 1 - \int_{-\rho}^{\rho} \frac{1}{(2\pi\Delta)^{1/2}} \exp\left(-\frac{y^2}{2\Delta}\right) \,dy \geq
1- \frac{2\rho}{\sqrt{2\pi \Delta}} \geq 1 - \frac{\rho}{\sqrt{\Delta}} \geq 1 - \frac{\epsilon}{4d}.
\end{equation*}
By the Gaussian tail bound we have that
\[
\int_{R/\sqrt{d}}^{\infty} \psi(x) \,dx \leq \exp\left(-\frac{R^2}{2d \Delta}\right) \leq \frac{\epsilon^{2}}{64 d^2} \leq \frac{\epsilon}{8d},
\]
for any $\epsilon \in (0,1)$. Thus $\int_{-R/\sqrt{d}}^{R/\sqrt{d}} \psi(x) \,dx \geq 1 - \frac{\epsilon}{2d}$.
Since $[-R/\sqrt{d}, R/\sqrt{d}]^d \subset B(0,R)$, we deduce from \eqref{eq:prod} that
\[
\int_{B(0,R)} g(z) \,dz \geq \int_{[-R/\sqrt{d}, R/\sqrt{d}]^d} \prod_{i=1}^{d} \psi(z_i) \,dz \geq \left(1- \frac{\epsilon}{2d}\right)^d \geq 1 - \epsilon/2.
\]
\end{proof}

We now proceed to the proof of Proposition~\ref{sec:coupling}.

\begin{proof}[\textbf{Proof of Proposition~\ref{sec:coupling}}]
We will construct $\Xi$ via three Poisson point processes.
We start by defining $\Xi_0$ as a Poisson point process over $Q_{K}$
with intensity $(1-\epsilon/2)\beta$.
Recall that $\Phi_0$ has at least $\beta \ell^d$ nodes in each cell of $Q_K$.
Then, in any fixed cell,
$\Xi_0$ has fewer nodes than $\Phi_0$ if
$\Xi_0$ has less than $\beta\ell^d$ nodes in that cell, which by a
standard Chernoff bound (cf.\ Lemma~\ref{lem:cbpoisson}) occurs
with probability larger than
$1-\exp\left(-\frac{{\epsilon'}^2(1-\epsilon/2)\beta\ell^d}{2}(1-{\epsilon'}/3)\right)$
for $\epsilon'$ such that $(1+\epsilon')(1-\epsilon/2)=1$. Since $\epsilon \in (0,1)$ we
have $\epsilon'\in(\epsilon/2,1)$, and the probability above can be bounded below by
$1-\exp\left(-c\epsilon^2\beta\ell^d\right)$ for some constant $c=c(d)$.
Let $\{\Xi_0 \preceq \Phi_0\}$ be the event that $\Xi_0$ has fewer nodes
than $\Phi_0$ in every cell of $Q_{K}$.
Using the union bound over cells we obtain
\begin{equation}
   \pr{\Xi_0 \preceq \Phi_0} \geq 1- \frac{K^d}{\ell^d}\exp(-c\epsilon^2\beta\ell^d).
   \label{eq:cbcoupling}
\end{equation}

If $\{\Xi_0 \preceq \Phi_0\}$ holds, then we can map each node of
$\Xi_0$ to a unique node of $\Phi_0$ in the same cell. We will now show that we can couple
the motion of the nodes in $\Xi_0$ with the motion of their respective pairs in
$\Phi_0$ so that the probability that an arbitrary pair is at the same location at time $\Delta$
is sufficiently large.

To describe the coupling, let
$v'$ be a node from $\Xi_0$ located at $y' \in Q_{K}$, and let $v$ be the pair of
$v'$ in $\Phi_0$. Let $y$ be the location of $v$ in $Q_{K}$, and note that since $v$ and
$v'$ belong to the same cell we have $\|y-y'\|_2 \leq \sqrt{d}\ell$.
We will construct a function $g(z)$ that is smaller than the densities
for the motions of $v$ and $v'$ to the location $y'+z$, uniformly for $z\in \mathbb{R}^d$.
That is,
\begin{equation}
   g(z) \leq \frac{1}{(2\pi \Delta)^{d/2}} \exp\left(-\frac{\max\{\|z\|_2^2,\|y'+z-y\|_2^2\}}{2\Delta}\right)
   \label{eq:constraintg}
\end{equation}
for all $z\in \mathbb{R}^d$.

To this end we set
\begin{equation}
   g(z) = \frac{1}{(2\pi \Delta)^{d/2}} \exp\left(-\frac{(\|z\|_2+\sqrt{d}\ell)^2}{2\Delta}\right).
   \label{eq:g1}
\end{equation}
Note that this definition satisfies~(\ref{eq:constraintg}) since by the triangle inequality
$\|y'+z-y\|_2 \leq \|y'-y\|_2+\|z\|_2$ and $\|y'-y\|_2 \leq \sqrt{d}\ell$.
Define $\psi= 1-\int_{\mathbb{R}^d} g(z)\,dz$.
Then, with probability $1-\psi$ we
can use the density function $\frac{g(z)}{1-\psi}$ to sample a single location
for the position of both $v$ and $v'$ at time $\Delta$,
and then set $\Xi'_0$ to be the Poisson point process with
intensity $(1-\psi)(1-\epsilon/2)\beta$
obtained by {\it thinning} $\Xi_0$ (i.e., deleting each node
of $\Xi_0$ with probability $\psi$).
At this step we have crucially used the fact that the function
$g(z)$ in (\ref{eq:g1}) is oblivious of the location of $v$ and, consequently, is independent of the
point process $\Phi_0$. (If one were to use the maximal coupling suggested
by~(\ref{eq:constraintg}), then the thinning probability would depend on $\Phi_0$, and
$\Xi'_0$ would not be a Poisson point process.)

Let $\Xi'_\Delta$ be obtained from $\Xi'_0$
after the nodes have moved
according to the density function $\frac{g(z)}{1-\psi}$.
Thus we are assured that the nodes of the Poisson point process $\Xi'_\Delta$
are a subset of the nodes of $\Phi_\Delta$ and are independent of the nodes of $\Phi_0$, where $\Phi_\Delta$ is
obtained by letting the nodes of $\Phi_0$ move from time $0$ to time $\Delta$.

By Lemma~\ref{lem:psi} we get that if $\Delta$ and $K-K'$ are large enough,
then the integral of $g(z)$ inside the ball $B=B(0,(K-K')/2)$ is larger than
$1-\epsilon/2$. (We are interested in the ball $B$ since for all $z\in Q_{K'}$ we have $z+B\subset Q_K$.)

When
$\{\Xi_0 \preceq \Phi_0\}$ holds,
$\Xi'_{\Delta}$ consists of a subset of the nodes of $\Phi_\Delta$.
Note that $\Xi'_\Delta$ is a \emph{non-homogeneous} Poisson point process over $Q_K$.
It remains to show that the intensity of $\Xi'_\Delta$ is strictly larger than $(1-\epsilon)\beta$ in $Q_{K'}$ so
that $\Xi$ can be obtained from $\Xi'_\Delta$ via thinning; since $\Xi'_\Delta$ is independent of
$\Phi_0$, so is $\Xi$.

For $z \in \mathbb{R}^d$, let $\mu(z)$ be the intensity of $\Xi'_{\Delta}$.
Since $\Xi'_0$ has no node outside $Q_{K}$, we obtain for any $z\in Q_{K'}$,
$$
   \mu(z)
   \geq (1-\psi)(1-\epsilon/2)\beta \int_{z+B} \frac{g(z-x)}{1-\psi} \,dx
   = (1-\epsilon/2)\beta \int_{B}g(x) \,dx,
$$
where the inequality follows since $z+B \subset Q_{K}$ for all $z \in Q_{K'}$.
From Lemma~\ref{lem:psi},
choosing the constants $c_1$ and $c_2$ sufficiently large
 we have
$\int_{B} g(x) \,dx \geq 1-\epsilon/2$. We then obtain
$\mu(z) \geq (1-\epsilon/2)^2\beta \geq (1-\epsilon)\beta$, which is the intensity of $\Xi$.
Therefore, when $\{\Xi_0 \preceq \Phi_0\}$ holds, which occurs with probability given by (\ref{eq:cbcoupling}),
the nodes of $\Xi$ are a subset of the nodes of $\Phi_\Delta$, which completes the proof of Proposition~\ref{pro:ss}.
\end{proof}

%############################################################################################
\subsection{Density}\label{sec:density}
In this section we prove Proposition~\ref{pro:density} using a multi-scale argument.
Since the argument is rather involved, we begin with a high-level overview.

% We tessellate the cube $Q_L$ into cubes of side length $\ell$ and we refer to each such cube as a {\bf{cell}}.
% In order to prove Proposition~\ref{pro:density} we are going to apply a multi-scale argument.

\topic{Proof overview}
Our goal is to show that if we tessellate the cube $Q_L$, with $L=t$, into cells of volume of order $(\log{t})^c$, then the probability that all cells
contain sufficiently many nodes for a fraction $1-\epsilon$ of the time steps is at least the expression given in Proposition~\ref{pro:density}.

We start at scale $1$ with the cube $Q_{L_1}$ where $L_1 > L$. We tessellate $Q_{L_1}$ into cells that are so large
that we can easily show that with very high probability during \emph{all} time steps all these cells contain sufficiently many nodes.
We refer to this as the event that ``the density condition is satisfied at all steps for scale 1.''
Then, when going from scale $j-1$ to scale $j$,
we take a smaller cube $Q_{L_j}$ with $L_j < L_{j-1}$, and
tessellate it into cells that are smaller than the cells at the previous scale (see Figure~\ref{fig:multiscalecells}).
We define the density condition for scale $j$ at a given time step as the event that all the cells at scale $j$ contain a
number of nodes that is sufficiently large but strictly smaller than the one used for the density condition for scale $j-1$.
Since this density requirement becomes less strict when going from scale $j-1$ to scale $j$, we will be able to show that
the density condition for
scale $j$ is satisfied for a large fraction of the time steps at which the density condition is satisfied for scale $j-1$.
We repeat this procedure until we obtain, at the last scale, the cube $Q_L$ and cells of side length $\ell$.

The importance of the multi-scale approach is that it allows us to recover quickly from instances of low density, i.e., if the density condition
holds in scale $j-1$ but fails (at some time) in scale $j$, there are enough nodes nearby to recover density shortly thereafter.

\begin{figure}[tbp]
   \begin{center}
      \includegraphics[scale=.7]{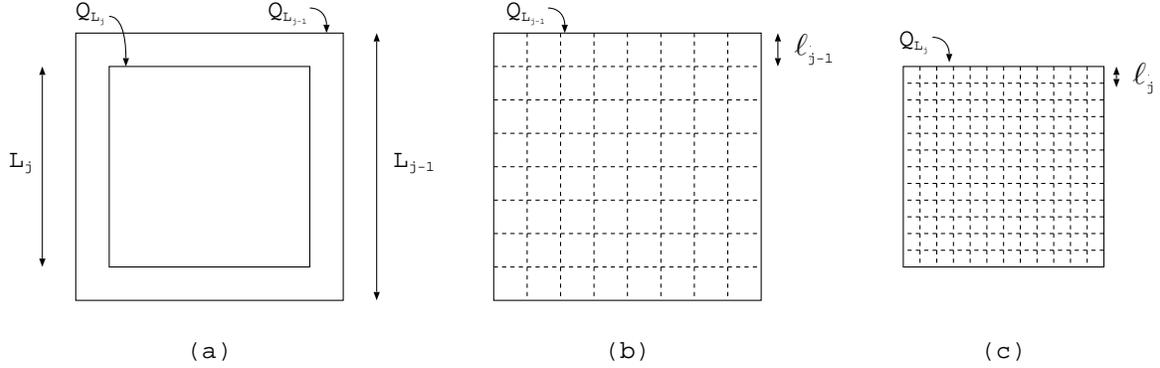}
   \end{center}
   \caption{(a) The cube $Q_{L_{j-1}}$ and the smaller cube $Q_{L_j}$ obtained when going
   from scale $j-1$ to scale $j$. (b) The tessellation of $Q_{L_{j-1}}$ into cells of side length $\ell_{j-1}$.
   (c) The finer tessellation of $Q_{L_{j}}$ into cells of side length $\ell_{j} < \ell_{j-1}$.}
   \label{fig:multiscalecells}
\end{figure}

We now proceed to the detailed argument.

\topic{Full proof}
Let $\kappa$ be the number of scales; we will see in a moment that $\kappa=O(\log t)$ will suffice.
Let $L_1 > L_2 > \cdots > L_\kappa=L$ such that $L_1 = t^2$ and $L_{\kappa}=t$.

Let $\ell_1 > \ell_2 > \cdots > \ell_\kappa =\ell$.
At scale $j$, we consider the cube $Q_{L_j}$ and tessellate it into cells of side length $\ell_j$
(see Figure~\ref{fig:multiscalecells}(b--c)). We
say that a cell is {\emph{dense}} at a given time step for scale $j$ if it contains more than $(1-\xi_j)\lambda\ell_j^d$ nodes
at that step, where the $\xi_j$ satisfy
\[
\frac{\xi}{2}=\xi_1 < \xi_2 < \cdots < \xi_\kappa=\xi \quad \text{ and } \quad \xi_{j}-\xi_{j-1}= \frac{\xi}{2(\kappa-1)}, \text{ for all } j.
\]
We start by analyzing the event that all cells are dense for scale $1$ during all time steps, which we denote by $D_1$.
The next lemma shows that $D_1$ occurs with very high probability.
\begin{lemma}\label{lem:d1}
   If $\ell_1^d > C\log t$ for some large enough constant $C$, then
   there exists a constant $c$ such that
   $$
      \pr{D_1} \geq 1 - \exp\left(-c\lambda \ell_1^d\right).
   $$
\end{lemma}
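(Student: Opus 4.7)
The plan is to establish $D_1$ by a straightforward union bound over cells and time steps, combined with the standard lower-tail Chernoff bound for Poisson random variables. The key fact is that for each fixed time $i$, $\Pi_i$ is again a Poisson point process of intensity $\lambda$ (as noted right after the model definition), so the number of nodes of $\Pi_i$ in any fixed scale-1 cell is Poisson with mean $\lambda \ell_1^d$; the (complicated) temporal dependence plays no role in a union bound.

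First, I would fix a scale-1 cell $C \subset Q_{L_1}$ and a time step $i \in \{0, 1, \ldots, t-1\}$, and let $N_C^{(i)}$ denote the number of nodes of $\Pi_i$ inside $C$. Since $N_C^{(i)}$ is Poisson with mean $\mu := \lambda \ell_1^d$, the standard lower-tail Chernoff estimate (which the authors invoke later as their Lemma~\ref{lem:cbpoisson}) gives
\[
\pr{N_C^{(i)} < (1-\xi_1)\mu} \;\leq\; \exp\!\left(-\tfrac{\xi_1^2}{2}\,\lambda \ell_1^d\right) \;=\; \exp\!\left(-\tfrac{\xi^2}{8}\,\lambda \ell_1^d\right),
\]
using $\xi_1 = \xi/2$.

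Second, I would union-bound over all scale-1 cells and all time steps. The number of cells is $(L_1/\ell_1)^d = (t^2/\ell_1)^d \leq t^{2d}$, and there are $t$ time steps, giving
\[
\pr{D_1^{\compl}} \;\leq\; t \cdot t^{2d}\,\exp\!\left(-\tfrac{\xi^2}{8}\,\lambda \ell_1^d\right) \;=\; \exp\!\left((2d+1)\log t - \tfrac{\xi^2}{8}\,\lambda \ell_1^d\right).
\]

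Finally, the hypothesis $\ell_1^d \geq C \log t$ lets me absorb the polynomial factor: choosing $C = C(d,\lambda,\xi)$ sufficiently large so that $(2d+1)\log t \leq \tfrac{\xi^2}{16}\lambda \ell_1^d$, the exponent becomes at most $-\tfrac{\xi^2}{16}\lambda \ell_1^d$, yielding the desired bound with $c = \xi^2/16$. There is no real obstacle here; the only point worth flagging is that one must use the same $\Pi_i$-is-Poisson fact in each of the $t$ applications, and this is consistent since at each fixed time slice the Brownian displacements preserve the Poisson law of the process.
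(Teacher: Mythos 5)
Your proof is correct and follows essentially the same route as the paper's: at each fixed time the cell count is Poisson with mean $\lambda\ell_1^d$, apply the lower-tail Chernoff bound of Lemma~\ref{lem:cbpoisson}, union-bound over the $O(t^{2d})$ cells and $t$ time steps, and absorb the polynomial factor using $\ell_1^d \geq C\log t$. No issues.
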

\begin{proof}
   For any fixed time $i$ and cell $k$, the number of nodes in $k$ at time $i$ is given by a Poisson random variable with
   mean $\lambda \ell_1^d$. Then, using a Chernoff bound (cf. Lemma~\ref{lem:cbpoisson}), we obtain that
   there are more than $(1-\xi_1)\lambda \ell_1^d$ nodes in that cell at that time step with probability
   larger than $1-\exp(-\xi_1^2\lambda\ell_1^d/2)$. The number of cells inside $Q_{L_1}$ is $O(t^{2d})$
   by our choice of $L_1$ and $\ell_1$.
   The proof is completed by taking the union bound over all cells and
   time steps, and using the assumption on $\ell_1$.
\end{proof}

We will need to disregard some time steps when going from one scale to the next.
During this discussion it will be useful to refer to Figure~\ref{fig:multiscale}.
\begin{figure}[tbp]
   \begin{center}
      %\scalebox{.8}{\input{diag.pdftex_t}}
      \includegraphics[scale=.7]{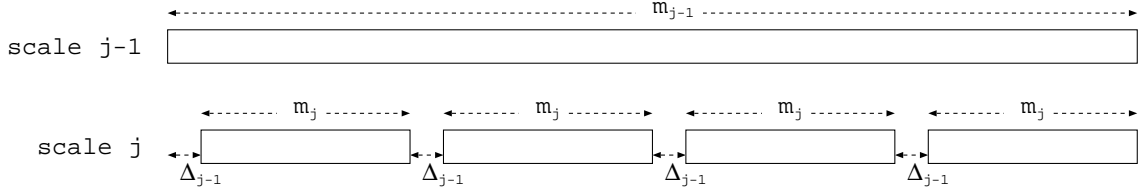}
   \end{center}
   \caption{Illustration for how 1 time interval of scale $j-1$ gives 4 subintervals of scale $j$.}
   \label{fig:multiscale}
\end{figure}
Let $s_j$ be the number of time steps considered for scale $j$.
We start with $s_1=t$ so that at scale $1$ all time steps are considered; we will have
$s_1 > s_2 > \cdots > s_\kappa$.
For each scale $j$, we will split time into intervals of $m_j$ consecutive time steps.
We start with $m_1=t$, so that at scale $1$ we have only one time interval of length $t$.

In each interval $[a,a+m_{j-1})$ at scale $j-1$ we consider the following four separated subintervals of length $m_j$ (see Figure~\ref{fig:multiscale}):
\begin{equation}\label{intervals}
[a+k \Delta_{j-1} + (k-1)m_{j}, a+ k \Delta_{j-1} + k m_{j}),\text{ for } k=1,2,3,4,
\end{equation}
where
\begin{equation}
   m_{j} = \frac{m_{j-1}-4\Delta_{j-1}}{4}.
   \label{eq:mm}
\end{equation}
We will set the $\Delta_{j}$ in a moment.
We skip $\Delta_{j-1}$ steps in order to allow the nodes to move far enough and enable the application of the
coupling from Proposition~\ref{pro:ss}.
Note that this gives $s_j = s_{j-1}\left(1-\frac{4\Delta_{j-1}}{m_{j-1}}\right)$.

For a given scale $j$, we say that a time interval is \emph{dense} if all cells are dense during all the time
steps contained in this time interval, i.e., each cell contains more than $(1-\xi_j)\lambda \ell_j^d$ nodes at all time steps.

Let $0=\epsilon_1 < \epsilon_2 < \cdots < \epsilon_\kappa=\epsilon$ satisfy $\epsilon_j - \epsilon_{j-1} = \frac{\epsilon}{\kappa-1}$.
For each scale $j \geq 1$, we define the event
\begin{align}\label{eq:defeven}
D_j = \{\text{a fraction of at least $\geq\left(1-\frac{\epsilon_j}{2}\right)$ time intervals of scale $j$ are dense}\}.
\end{align}

If $D_{\kappa}$ holds, the number of time steps for which all cells are dense for the last scale $\kappa$ is at least
\begin{equation}\label{eq:sdef}
   \left(1-\frac{\epsilon_\kappa}{2}\right)s_\kappa
   = \left(1-\frac{\epsilon_\kappa}{2}\right)s_1\prod_{j=1}^{\kappa-1} \left(1-\frac{4\Delta_{j}}{m_{j}}\right)
   \geq \left(1-\frac{\epsilon}{2}\right)t\left(1-\sum_{j=1}^{\kappa-1}\frac{4\Delta_{j}}{m_{j}}\right).
\end{equation}
Since we are aiming to obtain $(1-\epsilon)t$ time steps for which the density condition is satisfied
for the last scale, we set $\Delta_j$ to satisfy
\begin{equation}
   \frac{\Delta_j}{m_j} = \frac{\epsilon}{8\kappa}
   \label{eq:dm}
\end{equation}
for all $j$.
The value of $\Delta_j$ must be sufficiently large to allow nodes to move over a distance $\ell_{j}$.
We then define $\ell_j$ by
\begin{equation}
   \Delta_{j} = C'\ell_j^2 \kappa^2,
   \label{eq:dl}
\end{equation}
where $C'$ is a sufficiently large constant.

From (\ref{eq:mm}), (\ref{eq:dm}) and (\ref{eq:dl}),
we obtain
\begin{equation}
   \frac{\ell_j^2}{m_j}=\frac{\epsilon}{8C'\kappa^3} \quad \text{ and } \quad \ell_{j+1} = \ell_j \sqrt{\frac{1}{4} - \frac{\epsilon}{8\kappa}}.
   \label{eq:ellm}
\end{equation}
Since $m_1=t$, we get that $\ell_1^2 = \frac{\epsilon}{8C'\kappa^3}t \leq \frac{\epsilon}{8C'}t$ and since we want to get $\ell_\kappa^d = \ell^d \geq C (\log t)^{3}$,
it is easy to see that $\kappa = O(\log t)$  is sufficient.

%The total number of scales $\kappa$ is such that when starting from $\ell_1= \frac{t^{\frac{1}{d}}}{(\log{t})^{\frac{6}{d}}}$, then in scale $\kappa$ we get that $\ell=\ell_{\kappa}\geq C (\log{t})^{\frac{3}{d}}$, so obviously $\kappa = O(\log t)$  is sufficient. By a straightforward calculation using the second equation of \eqref{eq:ellm} and \eqref{eq:mm} one can check that the value of $\kappa$ that gives $\ell_{\kappa}$ the desired value satisfies $m_{\kappa} \geq 1$, so that we do not end up with empty time intervals.

For any time step $i$, let $\mathcal{F}_{i}$ be the $\sigma$-field induced by the locations of the nodes of $\Pi_0$
from time $0$ up to time $i$.

%\begin{figure}[tbp]
%   \begin{center}
%      \includegraphics[scale=.7]{scalemarkov.eps}
%   \end{center}
%   \caption{Definitions for Lemma~\ref{lem:scalemarkov}. For scale $j-1$ the big rectangle represents the
%   $k$th time interval of that scale, which contains $m_{j-1}$ time steps. After disregarding the first
%   $\Delta_{j-1}$ steps,
%   the remainder is split into $4$ time intervals $W_{j,4k+1}$, $W_{j,4k+2}$, $W_{j,4k+3}$, and $W_{j,4k+4}$. The
%   first time step of each such time interval is denoted $w_{j,k'}$ with $k'\in[4k+1,4k+4]$. For each time interval $W_{j,k'}$
%   for scale $j$, Lemma~\ref{lem:scalemarkov} gives the probability that all cells are dense for scale $j$ during all time steps
%   of that time interval conditioned on all cells being dense for scale $j-1$ at the first time step $w_{j,k'}$.}
%   \label{fig:scalemarkov}
%\end{figure}

\begin{lemma}\label{lem:scalemarkov}
   Let $A=[a,a+m_{j})$ be a time interval considered in scale $j$. We write $b=a-\Delta_{j-1}$ and $E = \{\text{at time $b$ all cells are dense for scale $j-1$} \}$.
   Let $\ell^d \geq C (\log t)^3$ for some sufficiently large constant $C>0$.
   Then there exists a constant $c$ such that
   $$
      \prcond{A \text{ not dense},E}{\mathcal{F}_{b}}
      \leq \exp\left(-c\lambda \ell_j^d/\kappa^2\right).
   $$
\end{lemma}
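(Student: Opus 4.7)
\textbf{Proof plan for Lemma~\ref{lem:scalemarkov}.}
My plan is to apply Proposition~\ref{pro:ss} separately at each time step $a+k \in A$, starting from the configuration $\Pi_b$ at time $b$, to extract from $\Pi_{a+k}$ an \emph{independent} Poisson point process of slightly smaller intensity inside $Q_{L_j}$, and then to use a standard Poisson Chernoff bound on that independent process to control the density in every scale-$j$ cell. The crucial feature is that the Poisson process produced by Proposition~\ref{pro:ss} is constructed independently of the initial configuration and therefore of $\mathcal{F}_b$, so the Chernoff estimate gives the right conditional probability.

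Fix $k \in \{0,1,\ldots,m_j-1\}$ and set $\Delta = \Delta_{j-1} + k$, so $b+\Delta = a+k$. Pick the intermediate density $\eta_j = (\xi_{j-1}+\xi_j)/2$ and choose $\epsilon' \in (0,1)$ with $(1-\epsilon')(1-\xi_{j-1}) = 1-\eta_j$; since $\xi_j-\xi_{j-1} = \xi/(2(\kappa-1))$ we have $\epsilon' = \Theta(1/\kappa)$. On the event $E$, the configuration $\Pi_b$ fulfils the hypothesis of Proposition~\ref{pro:ss} with $K = L_{j-1}$, cells of side $\ell_{j-1}$, and $\beta = (1-\xi_{j-1})\lambda$. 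The definition \eqref{eq:dl} with $C'$ sufficiently large guarantees $\Delta \geq \Delta_{j-1} \geq c_1 \ell_{j-1}^2/\epsilon'^2$, and the geometric decay of the cube sizes can be arranged so that $L_j \leq L_{j-1} - c_2\sqrt{\Delta \log \epsilon'^{-1}}$ uniformly for $k < m_j$. Applying Proposition~\ref{pro:ss} then produces an independent Poisson point process $\Xi^{(k)}$ of intensity $(1-\eta_j)\lambda$ that sits inside $\Pi_{a+k} \cap Q_{L_j}$ with conditional failure probability at most
\[
\frac{L_{j-1}^d}{\ell_{j-1}^d}\exp\!\left(-c_3\,\epsilon'^2(1-\xi_{j-1})\lambda\ell_{j-1}^d\right) \leq \exp\!\left(-c\,\lambda\ell_{j-1}^d/\kappa^2\right).
\]

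For any fixed scale-$j$ cell $C \subseteq Q_{L_j}$, the count $|\Xi^{(k)} \cap C|$ is Poisson with mean $(1-\eta_j)\lambda\ell_j^d$; since $\xi_j - \eta_j = \xi/(4(\kappa-1)) = \Theta(1/\kappa)$, the lower-tail Poisson Chernoff bound (Lemma~\ref{lem:cbpoisson}) yields
\[
\pr{|\Xi^{(k)}\cap C| < (1-\xi_j)\lambda\ell_j^d} \leq \exp(-c'\lambda\ell_j^d/\kappa^2).
\]
Whenever the coupling succeeds and this low-count event fails, the cell $C$ contains at least $(1-\xi_j)\lambda\ell_j^d$ nodes of $\Pi_{a+k}$, so it is dense at step $a+k$ for scale $j$. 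I then take a union bound over the $(L_j/\ell_j)^d \leq t^{2d}$ cells, the $m_j \leq t$ time steps in $A$, and the $m_j$ coupling failure events. Using $\ell_j^d \geq \ell^d \geq C\log^3 t$ with $\kappa = O(\log t)$, the factor $\ell_j^d/\kappa^2$ still dominates $\log t$, so all polynomial prefactors get absorbed into the exponential, producing the advertised bound $\exp(-c\lambda\ell_j^d/\kappa^2)$.

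The main obstacle is the parameter bookkeeping: one must check that the choices of $\ell_j$, $L_j$, $\Delta_{j-1}$, and $m_j$ from \eqref{eq:mm}--\eqref{eq:ellm} simultaneously satisfy, for every scale $j$ and every $k<m_j$, the three requirements of Proposition~\ref{pro:ss}---the lower bound on the elapsed time $\Delta$, the containment $Q_{L_j} \subset Q_{L_{j-1} - c_2\sqrt{\Delta\log \epsilon'^{-1}}}$, and the loss factor $\epsilon' = \Theta(1/\kappa)$ between the two densities---and that the gap between consecutive $\xi_j$'s is large enough (also $\Theta(1/\kappa)$) to drive the Chernoff bound. Once these calibrations are confirmed, the independence furnished by Proposition~\ref{pro:ss} reduces the density check for scale $j$ to a single Poisson tail estimate on $\Xi^{(k)}$, and the stated bound follows immediately.
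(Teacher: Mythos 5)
Your proposal is correct and follows essentially the same route as the paper: condition on $E$ and $\mathcal{F}_b$, apply Proposition~\ref{pro:ss} from time $b$ to each time step of $A$ with $K=L_{j-1}$, $K'=L_j$ and $\beta=(1-\xi_{j-1})\lambda$ to extract a fresh Poisson process of slightly reduced intensity inside $Q_{L_j}$, then use the Poisson Chernoff bound per scale-$j$ cell and a union bound over cells and time steps. The only cosmetic difference is how the density loss $\xi_j-\xi_{j-1}=\Theta(1/\kappa)$ is split between the coupling and the Chernoff step (your additive midpoint $\eta_j$ versus the paper's multiplicative choice $(1-\delta)^2(1-\xi_{j-1})=1-\xi_j$), which is immaterial.
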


\begin{proof}
   For any $F\in\mathcal{F}_{b}$ such that $F \cap E =\emptyset$ the lemma clearly holds.
   We then take $F \cap E \neq \emptyset$ and give an upper bound for
   $\prcond{A \text{ not dense}}{E,F}$.
   Let $\Phi_b$ be the point process obtained at time $b$ after conditioning on $F \cap E$.
   We first fix a time $w \in A$ and derive an
   upper bound for
   \[
   \prcond{\text{at time $w$ not all cells are dense for scale $j$}}{E, F}.
   \]
  Since we condition on $E$, all cells are dense for scale $j-1$ at time $b$.
  We now set $\delta$ such that $(1-\delta)^2(1-\xi_{j-1})=1-\xi_j$, which implies $\delta = \Theta(\xi_j-\xi_{j-1})$.
  We also choose a constant $c$ and the constant $C'$ appearing in the definition of $\Delta_j$ in \eqref{eq:dl} so that, setting
   \begin{equation}\label{defL}
   L_j \leq L_{j-1} - c \sqrt{\Delta_{j-1} \log{\frac{1}{\delta}}},
   \end{equation}
   allows us to apply Proposition~\ref{pro:ss} with $K=L_{j-1}$ and $K'=L_j$.
   Thus we obtain a fresh Poisson point process $\Xi$ with intensity $(1-\delta)(1-\xi_{j-1})\lambda$
   that can be coupled with $\Phi_w$
   (which is the point process obtained at time $w$ after the points of $\Phi_b$ have moved for time $w-b$)
   in such a way that $\Xi$ is stochastically
   dominated by $\Phi_w$ inside $Q_{L_j}$ with probability at least
   \begin{equation}
      1-\exp\left(-c_1\delta^2(1-\xi_{j-1})\lambda\ell_{j-1}^d\right),
      \label{eq:ssa}
   \end{equation}
   for some positive constant $c_1$.
   We note that the choice of $L_1=t^2$ and the fact that $\kappa=O(\log{t})$ together with equation \eqref{eq:dl} gives
   that it is always possible to choose the $L_j$'s satisfying \eqref{defL} and such that $L_{\kappa}=t$.

   A given cell is dense for scale $j$ at time $w$
   if $\Xi$ contains at least $(1-\xi_j)\lambda\ell_j^d$ nodes in that cell,
   which by the choice of $\delta$ happens with probability at least
   $1-\exp\left(-c_2\delta^2(1-\delta)(1-\xi_{j-1})\lambda \ell_j^d\right)$ for some constant $c_2$ (cf. Lemma~\ref{lem:cbpoisson}).
   The proof is completed by taking the union bound over all cells and over all time steps in $A$ and using the condition for $\ell$.
\end{proof}

We now use Lemma~\ref{lem:scalemarkov} to give an upper bound for $\pr{D_{j}^\compl \cap D_{j-1}}$ that holds for all $j$,
where $D_j$ was defined in \eqref{eq:defeven}.
\begin{lemma}\label{lem:itscale}
   If $\ell^d \geq C(\log t)^3$ for some large enough $C$, then
   there exists a constant $c$ such that for any $j\geq 2$ we have
   $$
      \pr{D_{j}^\compl \cap D_{j-1}}
      \leq \exp\left(-\frac{c\lambda t(\log t)^{3-6/d}}{\kappa^6} \right).
   $$
\end{lemma}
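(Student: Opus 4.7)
The plan is to reduce the event $D_j^\compl \cap D_{j-1}$ to the statement that sufficiently many individual scale-$j$ intervals fail while having a dense parent, and then control each failure via Lemma~\ref{lem:scalemarkov} applied conditionally along a filtration. First I carry out a counting argument. Writing $n_j$ for the total number of scale-$j$ intervals, the construction in~\eqref{intervals} yields $n_j = 4 n_{j-1}$. On $D_{j-1}$ at most $\tfrac{\epsilon_{j-1}}{2}n_{j-1}$ scale-$(j-1)$ intervals are non-dense, so at most $\tfrac{\epsilon_{j-1}}{2}n_j$ scale-$j$ intervals lie inside a non-dense parent. Since $D_j^\compl$ requires at least $\tfrac{\epsilon_j}{2}n_j$ non-dense scale-$j$ intervals, the event $D_j^\compl \cap D_{j-1}$ forces at least
\[
N := \frac{\epsilon_j-\epsilon_{j-1}}{2}\,n_j = \frac{\epsilon}{2(\kappa-1)}\,n_j
\]
scale-$j$ intervals to fail while having a dense scale-$(j-1)$ parent.

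Next I enumerate the scale-$j$ intervals chronologically as $I_1,\ldots,I_{n_j}$ and denote by $b_k$ the beginning of the buffer of length $\Delta_{j-1}$ preceding $I_k$. From~\eqref{intervals} one checks that $b_{k+1}$ is at least the right endpoint of $I_k$, so the failure events are adapted to the filtration $\mathcal{F}_{b_1} \subset \mathcal{F}_{b_2} \subset \cdots$. Moreover, if the parent of $I_k$ is dense then in particular all cells are dense at time $b_k$, which is precisely the event $E$ in Lemma~\ref{lem:scalemarkov}. That lemma therefore yields, uniformly over $\mathcal{F}_{b_k}$, an upper bound $p := \exp(-c_1\lambda \ell_j^d/\kappa^2)$ on the conditional probability that $I_k$ fails while its parent is dense at $b_k$. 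Iterating this bound along the filtration and union-bounding over the $\binom{n_j}{N}$ possible ``bad sets'' of size $N$ gives
\[
\pr{D_j^\compl \cap D_{j-1}} \leq \binom{n_j}{N}\,\exp\!\left(-Nc_1\lambda\ell_j^d/\kappa^2\right).
\]

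Finally I plug in the scaling. Using $\binom{n_j}{N} \leq (en_j/N)^N$ with $n_j/N = 2(\kappa-1)/\epsilon$, the entropy term contributes at most $O(N\log\kappa)$ to the exponent, which is absorbed by $Nc_1\lambda\ell_j^d/\kappa^2$ since $\ell_j^d\ge C(\log t)^3$ and $\kappa = O(\log t)$ force $\ell_j^d/\kappa^2 \gg \log\kappa$. From~\eqref{eq:ellm} one has $m_j = \Theta(\ell_j^2\kappa^3/\epsilon)$, and combining the recursion $s_j=s_{j-1}(1-4\Delta_{j-1}/m_{j-1})$ with~\eqref{eq:dm} gives $s_j = \Theta(t)$, hence $n_j = s_j/m_j = \Theta(t\epsilon/(\ell_j^2\kappa^3))$. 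Substituting,
\[
\frac{N\ell_j^d}{\kappa^2} = \Theta\!\left(\frac{t\,\epsilon^2\,\ell_j^{d-2}}{\kappa^6}\right),
\]
and since $\ell_j \geq \ell$ with $\ell^d\geq C(\log t)^3$ implies $\ell_j^{d-2}\gtrsim (\log t)^{3-6/d}$ for $d\geq 2$, this delivers the claimed $\exp(-c\lambda t(\log t)^{3-6/d}/\kappa^6)$ bound.

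The main obstacle is the dependence between sibling scale-$j$ intervals sharing a common parent: the events ``$I_k$'s parent dense'' and ``$I_{k+1}$'s parent dense'' refer to overlapping Poisson histories, so a naive product-of-independent-failures argument is unavailable. The buffer of length $\Delta_{j-1}$ is precisely what overcomes this: Proposition~\ref{pro:ss} (invoked inside Lemma~\ref{lem:scalemarkov}) couples the post-buffer configuration to a fresh Poisson process, and the failure probability bound holds \emph{conditional} on $\mathcal{F}_{b_k}$. The delicate point is to thread this conditioning through the filtration so that each of the $N$ failures really pays its own $\exp(-c_1\lambda\ell_j^d/\kappa^2)$ factor before the union bound over the $\binom{n_j}{N}$ bad sets is applied.
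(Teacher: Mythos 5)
Your proposal is correct and follows essentially the same route as the paper: the same counting argument showing that $D_j^\compl\cap D_{j-1}$ forces at least $\frac{\epsilon_j-\epsilon_{j-1}}{2}\,n_j$ scale-$j$ failures with dense parents, the same conditional use of Lemma~\ref{lem:scalemarkov} along the filtration, and the same final scaling. The only cosmetic difference is that you bound the tail by a union bound over $\binom{n_j}{N}$ bad sets with iterated conditioning, whereas the paper phrases this as stochastic domination by a binomial followed by a Chernoff bound --- these are the same estimate.
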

\begin{proof}
   If $D_{j-1}$ happens, then there are at least $\left(1-\frac{\epsilon_{j-1}}{2}\right)\frac{s_{j-1}}{m_{j-1}}$ dense time intervals for scale $j-1$.
   When we go to scale $j$, these intervals will give us
   \begin{equation}
     4\left(1-\frac{\epsilon_{j-1}}{2}\right)\frac{s_{j-1}}{m_{j-1}}
      \label{eq:goodparent}
   \end{equation}
   time intervals that we will consider for scale $j$.
   On the other hand, if the event $D_{j}^\compl$ holds, then
   there are less than
   \begin{equation}
      \left(1-\frac{\epsilon_{j}}{2}\right)\frac{s_{j}}{m_{j}}
      \label{eq:goodchild}
   \end{equation}
   dense intervals for scale $j$.
   Let $w$ be obtained by subtracting (\ref{eq:goodchild}) from (\ref{eq:goodparent}), that is,
   \begin{eqnarray*}
      w= \frac{s_j}{m_j} \left(\frac{\epsilon_j- \epsilon_{j-1}}{2}\right).
   \end{eqnarray*}
   Let $Z$ be the number of subintervals $[a,a+m_j)$ of scale $j$ that are not dense for scale $j$, but
   are such that the time step $a-\Delta_{j-1}$ is dense for scale $j-1$. (We call a time step dense
   if all cells are dense at that time.)
   It is easy to see that if both $D_{j-1}$ and $D_{j}^\compl$ happen, then $Z \geq w$.

   We can write $Z$ as a sum of $s_{j}/m_{j}$ indicator random variables $I_k$, one for each time interval of scale $j$.
   Although the $I_k$'s depend on one another, Lemma~\ref{lem:scalemarkov} gives that
   the probability that $I_k=1$ given an arbitrary realization of the previous $k-1$ indicators
   is smaller than $\rho_{j}=\exp\left(-c_1\xi^2\lambda \ell_{j}^d/\kappa^2\right)$ for some constant $c_1$.
   Therefore, $Z$ is stochastically
   dominated by a random variable $Z'$ obtained as a
   sum of $s_{j}/m_{j}$ i.i.d. Bernoulli random variables with mean $\rho_{j}$. Using a Chernoff bound
   (cf. Lemma~\ref{lem:cbbinomial}),
   we obtain
   \begin{eqnarray}
      \pr{Z' \geq w}
      &=& \pr{Z' - \E{Z'} \geq \frac{s_j}{m_j} \left( \frac{\epsilon_j-\epsilon_{j-1}}{2} - \rho_j \right) } \nonumber
     \\& \leq &\exp\left(-\frac{s_j}{m_j} \left(\frac{\epsilon_j - \epsilon_{j-1}}{2} \right)\left(\log \left(\frac{\epsilon_j - \epsilon_{j-1}}{2\rho_j} \right)-1 \right)  \right).
         \label{eq:chernoff}
   \end{eqnarray}
   Note that $\epsilon_{j}-\epsilon_{j-1}=\frac{\epsilon}{\kappa-1}$ and $\log(\rho_{j}^{-1})=\Theta(\xi^2\lambda\ell_{j}^d/\kappa^2)$. Also $\ell_j^d \geq \ell^d \geq C(\log t)^3$ and $\kappa = O(\log t)$, so we obtain a constant $c_2$ such that
   \begin{equation*}
      \pr{Z' \geq w} \leq \exp\left(-c_2 \xi^2 \lambda s_j \frac{\ell_j^d}{m_j}\frac{\epsilon}{\kappa^3}  \right).
   \end{equation*}
   Recall from \eqref{eq:ellm} that $\frac{\ell_{j}^2}{m_{j}}=\frac{\epsilon}{8C'\kappa^3}$. By \eqref{eq:sdef} and \eqref{eq:dm} we have that $s_{j-1}=\Theta(t)$ for all $j$, so we finally obtain
   $$
      \pr{Z' \geq w}
      \leq \exp\left(
         -c_3\frac{\epsilon^2\xi^2}{\kappa^6}\lambda\ell_{j}^{d-2} t\right)
   $$
   for some constant $c_3>0$. Using $\ell_j \geq \ell$ and the assumption on $\ell$ in the statement of the lemma completes the proof.
\end{proof}

We are now in a position to prove Proposition~\ref{pro:density}.
\begin{proof}[\textbf{Proof of Proposition~\ref{pro:density}}]
   To prove Proposition~\ref{pro:density}, we need to derive an upper bound for $\pr{D_\kappa^\compl}$. Note that
   $\pr{D_\kappa^\compl} \leq \pr{D_\kappa^\compl \cap D_{\kappa-1}} + \pr{D_{\kappa-1}^\compl}$. Applying
   this inequality recursively for the term $\pr{D_{\kappa-1}^\compl}$ we obtain
   $$
      \pr{D_\kappa^\compl}
      \leq \sum_{j=2}^{\kappa}\pr{D_{j}^\compl \cap D_{j-1}} + \pr{D_1^\compl}.
   $$
   Each term in the sum can be bounded using Lemma~\ref{lem:itscale} and the last term can be bounded using
   Lemma~\ref{lem:d1}. The proof is completed since $\kappa=O(\log t)$ and the initial value
   \[
   \ell_1 = \frac{\epsilon}{8C'\kappa^3}t \geq c_1\frac{t}{(\log{t})^{3}}.
   \]
\end{proof}

\section{Broadcast time}\label{sec:broadcast}
In this section we use Theorem~\ref{thm:percolation} to prove Corollary~\ref{cor:broadcast} for
a finite mobile network of volume $n/\lambda$.
%Due to lack of space, the proof is in the appendix.

We may relate the mobile geometric graph model on the torus to a model on~$\rset^d$ as follows.
Let $S_n$ denote the cube $Q_{(n/\lambda)^{1/d}}$.
The initial distribution of the nodes is a Poisson point process over~$\rset^d$
with intensity~$\lambda$ on~$S_n$ and zero elsewhere.
We allow the nodes to move according to Brownian motion over~$\rset^d$
as usual, and at each time step we project the location of each node onto~$S_n$
so that nodes ``wrap around'' $S_n$ when they reach the boundary.

\begin{proof}[\textbf{Proof of Corollary~\ref{cor:broadcast}}]
   Let $t= C \log n(\log\log n)^{3+6/d}$ for some sufficiently large constant $C=C(d)$.
   We define a \emph{giant component} as a connected component that
   contains at least two nodes at distance larger than
   $\frac{(n/\lambda)^{1/d}}{4}$. It follows from~\cite[Theorem~2]{PenPis} and the union bound
   over time steps that, with probability $1-e^{-\Theta(n^{1/d})}$,
   $G_i$ contains a \emph{unique} giant component
   for all integer $i\in[0,2t-1]$.

   The proof proceeds in two stages.  First, we show that for any fixed $i\in[0,2t-1]$,
   w.h.p. the giant component of $G_i$ has at least one node in common with the giant
   component of~$G_{i+1}$. This means that, once the message has reached the giant
   component, it will reach any node~$v$ as soon as~$v$ itself belongs to the giant
   component.
   Then we show that, after $t$ steps, all nodes have belonged to the giant component w.h.p.
   This implies that broadcast is achieved after $2t$ steps w.h.p.

   To establish the first stage, let $\epsilon>0$ be sufficiently small so
   that $(1-\epsilon)\lambda>\lambda_\critical$. We use
   the thinning property to split $\Pi_i$ into two Poisson point processes,
   $\Pi_i'$ and~$\Pi_i''$, with intensities $(1-\epsilon)\lambda$ and $\epsilon \lambda$
   respectively. Let $G_i'$ and $G_{i+1}'$ be the graphs induced by~$\Pi'_i$ and $\Pi'_{i+1}$
   respectively.  Then with probability $1-e^{-\Theta(n^{1/d})}$ both $G_i'$ and $G_{i+1}'$
   contain a unique giant component~\cite[Theorem~2]{PenPis}.   We show that
   at least one node from $\Pi_i''$ belongs to both giant components.
   For any node $v$ of $\Pi_i''$, the probability that $v$
   belongs to the giant component of $G_i'$ is larger than some constant $c=c(d)$.
   Moreover, using the FKG inequality we can show that $v$ belongs to the giant
   components of both $G_i'$ and $G_{i+1}'$ with probability larger than~$c^2$.
   Therefore, using the thinning property again, we can show that the nodes from~$\Pi_i''$
   that belong to the giant components of both $G_i'$ and $G_{i+1}'$ form a Poisson point
   process with intensity $\epsilon \lambda c^2$, since $c$ does not depend on $\Pi_i''$.
   Hence, there will be at
   least one such node inside~$S_n$ with probability $1-e^{-\epsilon c^2 n}$, and this
   stage is concluded by taking the union bound over time steps~$i$.

   We now proceed to the second stage of the proof. We first need to show that the tail bound on $\tperc$
   from Theorem~\ref{thm:percolation} also holds when applied to the finite region~$S_n$
   defined above.  Note that all the derivations in the proof of Theorem~\ref{thm:percolation}
   were restricted to the cube~$Q_{L_1}$, where $L_1=t^2$ was defined in Section~\ref{sec:density}.
   We have that $Q_{L_1}$ is contained inside $S_n$ for all sufficiently large $n$
   since $L_1=t^2=O(\log^2n(\log\log n)^{6+12/d})$ while $S_n$ has side length
   $(n/\lambda)^{1/d}$. In order to check that the toroidal boundary conditions do not
   affect the result, it suffices to observe that, during the time interval
   $[0,2t]$, no node moved distance larger than $\frac{(n/\lambda)^{1/d}}{2}$ w.h.p.

%    Therefore, it is enough to show that $Q_{L_1}$ is contained
%    inside~$S_n$ (so that the toroidal boundary conditions do not affect the result).  But this
%    holds for all sufficiently large~$n$ since $L_1=t^2=O(\log^2 n (\log\log n)^{6+12/d})$ while $S_n$ has
%    side length $(n/\lambda)^{1/d}$.

   Now note that, by a Chernoff bound, $G$ has at most $(1+\delta)n$ nodes with probability
   larger than $1-e^{-\Omega(n)}$ for any fixed $\delta>0$.  These nodes are indistinguishable,
   so letting $\rho$ be the probability that an arbitrary node has percolation time at
   least~$t$, we can use the union bound to deduce that this applies to {\it at least one\/}
   node in $G$ with probability at most $(1+\delta)n\rho$.  Let $v$ be an arbitrary node.
   In order to relate $\rho$ to the result of Theorem~\ref{thm:percolation},
   we can use translation invariance and assume that $v$ is at the origin.
   Then, by the ``Palm theory" of Poisson point processes~\cite{SKM95}, $\rho$~is equivalent
   to the tail of the percolation time for a node added at the origin, which is precisely
   $\pr{\tperc> t}$.  Thus finally, using Theorem~\ref{thm:percolation} we get
   $\rho \leq \exp(-c \frac{t}{(\log t)^{3+6/d}})$, which can be made $o(1/n)$
   by setting $C$ sufficiently large in the definition of $t$.

   We then obtain that with probability $1-o(1/n)$
   all nodes of $G$ have been in the giant component during the time interval $[0,t-1]$, which implies that at
   time step $t-1$, the nodes of the giant component contain the message being broadcast. By stationarity,
   with probability $1-o(1/n)$ all nodes have been in the giant component during the time interval $[t,2t-1]$, and thus
   have received the message by time $2t$. This completes the proof of Corollary~\ref{cor:broadcast}.
\end{proof}

\begin{remark}\rm{
% \par\bigskip\noindent
% {\bf Remark:}
It is easy to see that the above result also holds in the case where the graph has
\emph{exactly} $n$ nodes.
The proof above shows that, by setting $C$ large enough, we can ensure
$\pr{\tbroad > 2t}=o(1/n)$ for the given value of~$t$.
Also, it is well known that a Poisson random variable with mean~$n$ takes the value~$n$
with probability $p=\Theta(1/\sqrt{n})$. Therefore, for a graph with exactly $n$ nodes,
we have $\Pr[\tbroad < t]=\frac{p-o(1/n)}{p}=1-o(1/\sqrt{n})$.
}\end{remark}

%############################################################################################
%############################################################################################
%############################################################################################

\section*{Acknowledgments}
We are grateful to Takis~Konstantopoulos and David~Tse for useful discussions.

%############################################################################################
%############################################################################################
%############################################################################################
\bibliographystyle{plain}
\bibliography{tperc}

\begin{thebibliography}{10}

\bibitem{AlonSpencer}
N.~Alon and J.H. Spencer.
\newblock {\em The probabilistic method}.
\newblock John Wiley \& Sons, 3rd edition, 2008.

\bibitem{BMS}
A.M. Berezhovskii, Yu.A. Makhovskii, and R.A. Suris.
\newblock {Wiener} sausage volume moments.
\newblock {\em Journal of Statistical Physics}, 57:333--346, 1989.

\bibitem{vandenberg}
{J.~van den} Berg, R.~Meester, and D.G. White.
\newblock Dynamic boolean model.
\newblock {\em Stochastic Processes and their Applications}, 69:247--257, 1997.

\bibitem{CTaylor}
Z.~Ciesielski and S.J. Taylor.
\newblock First passage times and sojourn times for {B}rownian motion in space
  and the exact {H}ausdorff measure of the sample path.
\newblock {\em Trans. Amer. Math. Soc.}, 103:434--450, 1962.

\bibitem{clementi}
A.~Clementi, F.~Pasquale, and R.~Silvestri.
\newblock {MANETS}: high mobility can make up for low transmission power.
\newblock In {\em Proceedings of the 36th International Colloquium on Automata,
  Languages and Programming ({ICALP})}, 2009.

\bibitem{DGRS}
A.~Drewitz, J.~G\"artner, A.F. Ram\'irez, and R.~Sun.
\newblock Survival probability of a random walk among a poisson system of
  moving traps, 2010.
\newblock {arXiv:1010.3958v1}.

\bibitem{GK1}
P.~Gupta and P.R. Kumar.
\newblock Critical power for asymptotic connectivity in wireless networks.
\newblock In W.M. McEneany, G.~Yin, and Q.~Zhang, editors, {\em Stochastic
  Analysis, Control, Optimization and Applications: A Volume in Honor of
  W.H.~Fleming}, pages 547--566. {Birkh\"auser}, Boston, 1998.

\bibitem{GK2}
P.~Gupta and P.R. Kumar.
\newblock The capacity of wireless networks.
\newblock {\em {IEEE} Transactions on Information Theory}, 46:388--404, 2000.
\newblock Correction in {\it IEEE Transactions on Information Theory\/}~{\bf
  49} (2000), p.~3117.

\bibitem{Kesidis}
G.~Kesidis, T.~Konstantopoulos, and S.~Phoha.
\newblock Surveillance coverage of sensor networks under a random mobility
  strategy.
\newblock In {\em Proceedings of the 2nd {IEEE} International Conference on
  Sensors}, 2003.

\bibitem{KestenSidoravicius}
H.~Kesten and V.~Sidoravicius.
\newblock The spread of a rumor or infection in a moving population.
\newblock {\em The annals of probability}, 33:2402--2462, 2005.

\bibitem{Konst}
T.~Konstantopoulos.
\newblock Response to {Prof.}\ {Baccelli's} lecture on modelling of wireless
  communication networks by stochastic geometry.
\newblock {\em Computer Journal Advance Access}, 2009.

\bibitem{Liu}
B.~Liu, P.~Brass, O.~Dousse, P.~Nain, and D.~Towsley.
\newblock Mobility improves coverage of sensor networks.
\newblock In {\em Proceedings of the 6th {ACM} International Conference on
  Mobile Computing and Networking ({MobiCom})}, 2005.

\bibitem{Mattila95}
P.~Mattila.
\newblock {\em Geometry of Sets and Measures in {Euclidean} Spaces}.
\newblock Cambridge University Press, 1995.

\bibitem{MR}
R.~Meester and R.~Roy.
\newblock {\em Continuum Percolation}.
\newblock Cambridge University Press, 1996.

\bibitem{MOBC}
M.~Moreau, G.~Oshanin, O.~B\'enichou, and M.~Coppey.
\newblock Lattice theory of trapping reactions with mobile species.
\newblock {\em Phys. Rev.}, E 69, 2004.

\bibitem{BM}
P.~{M\"orters} and Y.~Peres.
\newblock {\em Brownian Motion}.
\newblock Cambridge University Press, 2010.

\bibitem{Penrose1}
M.~Penrose.
\newblock The longest edge of the random minimal spanning tree.
\newblock {\em The Annals of Applied Probability}, 7:340--361, 1997.

\bibitem{Penrose2}
M.~Penrose.
\newblock On $k$-connectivity for a geometric random graph.
\newblock {\em Random Structures and Algorithms}, 15:145--164, 1999.

\bibitem{Penrose}
M.~Penrose.
\newblock {\em Random Geometric Graphs}.
\newblock Oxford University Press, 2003.

\bibitem{PenPis}
M.~Penrose and A.~Pisztora.
\newblock Large deviations for discrete and continuous percolation.
\newblock {\em Advances in Applied Probability}, 28:29--52, 1996.

\bibitem{PPPU10}
A.~Pettarin, A.~Pietracaprina, G.~Pucci, and E.~Upfal.
\newblock Infectious random walks, 2010.
\newblock {arXiv:1007.1604}.

\bibitem{SS10}
A.~Sinclair and A.~Stauffer.
\newblock Mobile geometric graphs, and detection and communication problems in
  mobile wireless networks, 2010.
\newblock {arXiv:1005.1117v1}.

\bibitem{Spitzer}
F.~Spitzer.
\newblock Electrostatic capacity, heat flow, and {Brownian} motion.
\newblock {\em Z.~Wahrscheinlichkeitstheorie verw.\ Geb.}, 3:110--121, 1964.

\bibitem{SKM95}
D.~Stoyan, W.S. Kendall, and J.~Mecke.
\newblock {\em Stochastic Geometry and its Applications}.
\newblock John Wiley \& Sons, 2nd edition, 1995.

\end{thebibliography}

%############################################################################################
%############################################################################################
%############################################################################################
\appendix

%############################################################################################
%############################################################################################
%############################################################################################
\section{Standard large deviation results}

We use the following standard Chernoff bounds and large deviation results.

\begin{lemma}[Chernoff bound for Poisson]\label{lem:cbpoisson}
   Let $P$ be a Poisson random variable with mean $\lambda$. Then, for any
   $0<\epsilon<1$,
   $$
      \pr{P \geq (1+\epsilon) \lambda} \leq  \exp\left(-\frac{\lambda \epsilon^2}{2}(1-\epsilon/3)\right),
   $$
   and
   $$
      \pr{P \leq (1-\epsilon) \lambda} \leq  \exp\left(-\frac{\lambda \epsilon^2}{2}\right).
   $$
\end{lemma}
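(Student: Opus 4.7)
The plan is to use the standard exponential (Chernoff) method: bound $\pr{P \geq a}$ by $\inf_{t>0} \E{e^{tP}}e^{-ta}$ and $\pr{P \leq a}$ by $\inf_{t>0} \E{e^{-tP}}e^{ta}$, exploiting the fact that the moment generating function of a Poisson$(\lambda)$ random variable has the simple closed form $\E{e^{tP}} = \exp\bigl(\lambda(e^t-1)\bigr)$. Both tail bounds follow by choosing the optimal $t$ and then simplifying the resulting expression via an elementary Taylor-type inequality for $(1\pm\epsilon)\log(1\pm\epsilon)$.

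For the upper tail, I would apply Markov's inequality to $e^{tP}$ with $t>0$ to obtain
\[
\pr{P \geq (1+\epsilon)\lambda} \leq \exp\bigl(\lambda(e^t - 1) - t(1+\epsilon)\lambda\bigr),
\]
and then choose $t = \log(1+\epsilon)$, which minimizes the right-hand side. This reduces the bound to $\exp\bigl(\lambda\phi_+(\epsilon)\bigr)$, where $\phi_+(\epsilon) = \epsilon - (1+\epsilon)\log(1+\epsilon)$. It remains to check the elementary inequality $\phi_+(\epsilon) \leq -\tfrac{\epsilon^2}{2}(1-\epsilon/3)$ valid for $\epsilon \in (0,1)$; this can be verified by expanding $(1+\epsilon)\log(1+\epsilon)$ as a power series or by differentiating the difference twice and checking signs.

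For the lower tail I would do the symmetric computation with $t>0$ applied to $e^{-tP}$:
\[
\pr{P \leq (1-\epsilon)\lambda} \leq \exp\bigl(\lambda(e^{-t}-1) + t(1-\epsilon)\lambda\bigr),
\]
and optimize at $t = -\log(1-\epsilon) > 0$. This yields a bound of the form $\exp\bigl(\lambda\phi_-(\epsilon)\bigr)$ with $\phi_-(\epsilon) = -\epsilon - (1-\epsilon)\log(1-\epsilon)$, and it suffices to show $\phi_-(\epsilon) \leq -\epsilon^2/2$ on $(0,1)$. This is again a routine calculus exercise: expand the logarithm as a convergent alternating-like series, or verify that the function $\epsilon \mapsto \phi_-(\epsilon) + \epsilon^2/2$ has value and derivative $0$ at $\epsilon=0$ and nonpositive second derivative on $(0,1)$.

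The only real work in this proof is the two deterministic function inequalities at the end; the probabilistic content is fully captured by the one-line moment generating function identity and the optimization over $t$. I expect the lower-tail bound to be mildly easier because the claimed exponent $-\lambda\epsilon^2/2$ is cleaner and the analogous Taylor inequality has no lower-order correction; the upper-tail bound, where the sharper exponent $-\tfrac{\lambda\epsilon^2}{2}(1-\epsilon/3)$ is asserted, is the main (and still modest) obstacle, and requires keeping the third-order term in the expansion of $(1+\epsilon)\log(1+\epsilon)$ with the correct sign.
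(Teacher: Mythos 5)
Your proof is correct. The paper states this lemma in its appendix as a standard fact and gives no proof at all, so there is nothing to compare against; your argument is the canonical one (Markov applied to $e^{\pm tP}$, optimize at $t=\pm\log(1\pm\epsilon)$, then the elementary inequalities $(1+\epsilon)\log(1+\epsilon)\geq \epsilon+\tfrac{\epsilon^2}{2}-\tfrac{\epsilon^3}{6}$ and $(1-\epsilon)\log(1-\epsilon)\geq -\epsilon+\tfrac{\epsilon^2}{2}$, both of which follow from your second-derivative check), and it is exactly what the authors implicitly rely on.
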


%\begin{lemma}[Large deviation for normal]\label{lem:cbnormal}
%   Let $N$ be a Normal random variable with mean $0$ and variance $\sigma^2$. Then, for any $x\geq 0$,
%   $$
%      \pr{N \geq x} \leq \frac{\sigma}{\sqrt{2\pi} x} \exp\left(-\frac{x^2}{2\sigma^2}\right).
%   $$
%\end{lemma}

\begin{lemma}[Chernoff bound for binomial~{\cite[Corollary~A.1.10]{AlonSpencer}}]\label{lem:cbbinomial}
   Let $X$ be the sum of $n$ i.i.d. Bernoulli random variables with mean $p$. Then,
   $\pr{X \geq np+a} \leq \exp\left(a-(pn+a)\log\left(1+\frac{a}{pn}\right)\right)$.
\end{lemma}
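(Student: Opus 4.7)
The plan is to prove this via the standard exponential moment (Chernoff/Cramér) method. I would start from the exponential Markov inequality: for any parameter $\theta>0$,
\[
\pr{X \geq np+a} = \pr{e^{\theta X} \geq e^{\theta(np+a)}} \leq e^{-\theta(np+a)}\,\E{e^{\theta X}}.
\]
Since $X$ is a sum of $n$ i.i.d.\ Bernoulli$(p)$ variables, the moment generating function factorizes as $\E{e^{\theta X}} = (1-p+pe^{\theta})^n$. Using the elementary inequality $1+x\leq e^x$ with $x=p(e^\theta-1)$ gives the clean bound $\E{e^{\theta X}} \leq \exp\bigl(np(e^\theta-1)\bigr)$.

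Plugging this in yields
\[
\pr{X \geq np+a} \leq \exp\bigl(np(e^\theta-1) - \theta(np+a)\bigr).
\]
The next step is to optimize the right-hand side over $\theta>0$. Differentiating the exponent with respect to $\theta$ and setting the derivative to zero gives $npe^{\theta}=np+a$, i.e. the optimal choice $\theta^\star = \log\!\left(1+\frac{a}{np}\right)$, which is positive whenever $a>0$.

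Substituting $\theta^\star$ back into the bound, the first term becomes $np(e^{\theta^\star}-1)=a$, while the second term becomes $(np+a)\log\!\left(1+\frac{a}{np}\right)$. Combining these yields exactly
\[
\pr{X \geq np+a} \leq \exp\!\left(a - (np+a)\log\!\left(1+\tfrac{a}{np}\right)\right),
\]
as required. There is no real obstacle in this proof; the only subtle point is verifying that $\theta^\star>0$ so that the exponential Markov inequality applies, which is immediate for $a>0$ (and the claimed bound is trivial, with right-hand side equal to $1$, when $a\leq 0$). Since this is a textbook result reproduced here for self-containedness, I would simply cite \cite{AlonSpencer} in the lemma statement, as the authors do, and optionally include the short derivation above for completeness.
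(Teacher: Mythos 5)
Your proof is correct, and it is essentially the standard argument: the paper offers no proof of its own for this lemma, simply citing Alon--Spencer, whose Corollary~A.1.10 is established by exactly the same exponential-moment (Chernoff) computation you carry out, including the bound $\E{e^{\theta X}}\leq\exp\left(np(e^{\theta}-1)\right)$ and the optimal choice $\theta^{\star}=\log\left(1+\frac{a}{np}\right)$. Nothing further is needed.
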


\end{document}